\renewcommand\le{\leqslant}
\renewcommand\ge{\geqslant}
\renewcommand\to{\rightarrow}
	\def\<{\langle }
	\def\>{\rangle }
\newtheorem{theor}{Theorem}[section]
\newtheorem{prop}[theor]{Proposition}
\newtheorem{lemma}[theor]{Lemma}
\newtheorem{coro}[theor]{Corollary}
\newtheorem{conj}[theor]{Conjecture}
\newtheorem{problem}[theor]{Problem}
\theoremstyle{definition}
\newtheorem{defi}[theor]{Definition}
\theoremstyle{remark}
\newtheorem{rema}[theor]{Remark}
\newtheorem{nota}[theor]{Notation}
\newtheorem*{claim*}{Claim}
\newtheorem*{qu*}{Question}
\newcommand{\genA}{\langle A\rangle}
\newcommand{\aab}{m}
\newcommand{\Zdd}{\mathbb Z^2}
\newcommand{\Zdt}{\mathbb Z^3}
\newcommand{\Zd}{\mathbb Z^d}
\newcommand{\pp}{\mathbb P}
\newcommand{\p}{\mathbb P}
\newcommand{\U}{\mathcal U}
\newcommand{\UU}{\mathcal U}
\newcommand{\dhp}{\mathbb H_u}
\newcommand{\Ss}{\mathcal{S}}
\begin{document}

\title{Anisotropic bootstrap percolation in three dimensions}
\author{Daniel Blanquicett}

\address{Mathematics Department,
University of California, Davis, CA 95616, USA}
\email{drbt@math.ucdavis.edu}
\thanks{{\it Date}: August 30, 2019.\\
\indent 2010 {\it Mathematics Subject Classification.}  Primary 60K35; Secondary 60C05.\\
\indent {\it Key words and phrases.}  Anisotropic bootstrap percolation, Exponential decay, Beams process.\\
\indent The author was partially supported by CAPES, Brasil.}
	
\begin{abstract}
Consider a $p$-random subset $A$ of initially infected vertices in the discrete cube $[L]^3$,
and assume that the neighbourhood of each vertex consists of the $a_i$ nearest neighbours
in the $\pm e_i$-directions for each
$i \in \{1,2,3\}$, where $a_1\le a_2\le a_3$.
Suppose we infect any healthy vertex $x\in [L]^3$ already having $a_3+1$ infected neighbours,
and that infected sites remain infected forever.
In this paper we determine the critical length for percolation up to a constant factor in the exponent, 
for all triples $(a_1,a_2,a_3)$.
To do so, we introduce a new algorithm called the {\it beams process}
and prove an exponential decay property for a family of subcritical two-dimensional bootstrap processes.
\end{abstract}
	
\maketitle 
\section{Introduction}
The study of bootstrap processes on graphs was initiated in 1979 by Chalupa,
Leath and Reich~\cite{ChLR79}, and is motivated by problems arising from statistical physics, such as the Glauber dynamics of 
the zero-temperature Ising model, and kinetically constrained spin models of the liquid-glass transition 
(see, e.g.,~\cite{FSS02,Morris09,MMT18}, and the recent survey~\cite{Morris17}). 
The $r$-neighbour bootstrap process on a locally finite graph $G$ is a monotone cellular automata on the 
configuration space $\{0,1\}^{V(G)}$, (we call vertices in state $1$ ``infected"), evolving in discrete time
in the following way: $0$ becomes $1$ when it has at least $r$ neighbours in state $1$, and infected vertices remain infected forever.
Throughout this paper, $A$ denotes the initially infected set, and we write $\genA=G$
if the state of each vertex is eventually 1.

We will focus on \emph{anisotropic} bootstrap models, which are $d$-dimensional analogues of a family of
(two-dimensional) processes studied by Duminil-Copin, van Enter and Hulshof \cite{EH07,DCE13,DEH18}.
In these models the graph $G$ has
vertex set $[L]^d$, and the neighbourhood of each vertex consists of the $a_i$ nearest neighbours in the
$-e_i$ and $e_i$-directions for each $i \in [d]$,
where $a_1\le \cdots\le a_d$ and $e_i\in\Zd$ denotes the $i$-th canonical unit vector.
In other words, $u,v\in [L]^d$ are neighbours if (see Figure \ref{figanis3d} for $d=3$)
\begin{align}\label{neigh3} 
u-v\in N_{a_1,\dots,a_d}:=\{\pm e_1,\dots, \pm a_1e_1\}\cup \cdots \cup \{\pm e_d,\dots, \pm a_de_d\}.
\end{align}
We also call this process the $\mathcal N_r^{a_1,\dots, a_d}$-{\it model}.
Our initially infected set $A$ 
is chosen according to the Bernoulli product measure $\p_p=\bigotimes_{v\in [L]^d}$Ber$(p)$,
and we are interested in the so-called {\it critical length for percolation},
for small values of $p$
\begin{equation}\label{criticalL}
 L_c(\mathcal N_r^{a_1,\dots,a_d},p):= \min\{L\in\mathbbm N: \pp_p(\genA=G
 )\ge 1/2\}.
 \end{equation}

 
The analysis of these bootstrap processes for $a_1=\cdots= a_d=1$ was initiated by Aizenman and Lebowitz~\cite{AL88} in 1988,
who determined the magnitude of the critical length 
up to a constant factor in the exponent for the $\mathcal N_2^{1,\dots,1}$-model (in other words, they determined the
`metastability threshold' for percolation). In the case $d = 2$, Holroyd~\cite{H03} determined (asymptotically, as $p \to 0$) the constant in the exponent  (this is usually called a sharp metastability threshold), proving that
$$L_c(\mathcal N_2^{1,1},p) = \exp\bigg( \frac{\pi^2/18 + o(1)}{p} \bigg).$$

For the general $\mathcal N_r^{1,\dots,1}$-model with $2\le r\le d$, the threshold was determined by Cerf and Cirillo \cite{CC99} and Cerf
and Manzo \cite{CM02}, and the sharp threshold by Balogh, Bollob\'as and Morris \cite{BBM09}
and Balogh, Bollob\'as, Duminil-Copin and Morris  \cite{BBDM12}: for all $d\ge r\ge 2$ there exists a computable constant 
$\lambda(d,r)$ such that, as $p\to 0$,
\begin{equation*}
 L_c(\mathcal N_r^{1,\dots,1},p) = \exp_{(r-1)}\bigg(\frac{\lambda(d,r) + o(1)}{p^{1/(d-r+1)}}\bigg).
\end{equation*}

In dimension $d=2$, we write $a_1=a, a_2=b$, and the $\mathcal N_r^{a,b}$-model is called isotropic when $a=b$ and anisotropic when $a<b$.
Hulshof and van Enter \cite{EH07} determined the threshold for the first interesting anisotropic model given by the family $\mathcal N_{3}^{1,2}$, and the 
corresponding sharp threshold was determined by Duminil-Copin and van Enter  \cite{DCE13}: for $b\ge 2$, as $p\to 0$,
\begin{equation*}
L_c\left(\mathcal N_{b+1}^{1,b},p\right)= \exp\left(\left(\frac{(b-1)^2}{4(b+1)} + o(1)\right)\frac{(\log p)^2}{p}\right).
\end{equation*}

The threshold was also determined in the general case $r=a+b$ by van Enter and Fey 
 \cite{AA12} and the proof can be extended to all $b+1\le r\le a+b$: as $p\to 0$,
\begin{equation}\label{paso1}
\log L_c\left(\mathcal N_{r}^{a,b},p\right)=
\begin{cases}
\Theta\left(p^{-(r-b)}\right) & \textup{if }b=a,\\
\Theta\left(p^{-(r-b)}(\log p)^2\right) & \textup{if }b>a.
\end{cases}
\end{equation}
 
\subsection{Anisotropic bootstrap percolation on $[L]^3$}
In this paper we consider the three-dimensional analogue of the anisotropic bootstrap process studied by Duminil-Copin,
van Enter and Hulshof. 
In dimension $d=3$, we write $a_1=a, a_2=b$ and $a_3=c$.
\vskip -.2cm
\begin{figure}[ht]
	\centering
	\includegraphics[width=0.35\textwidth]{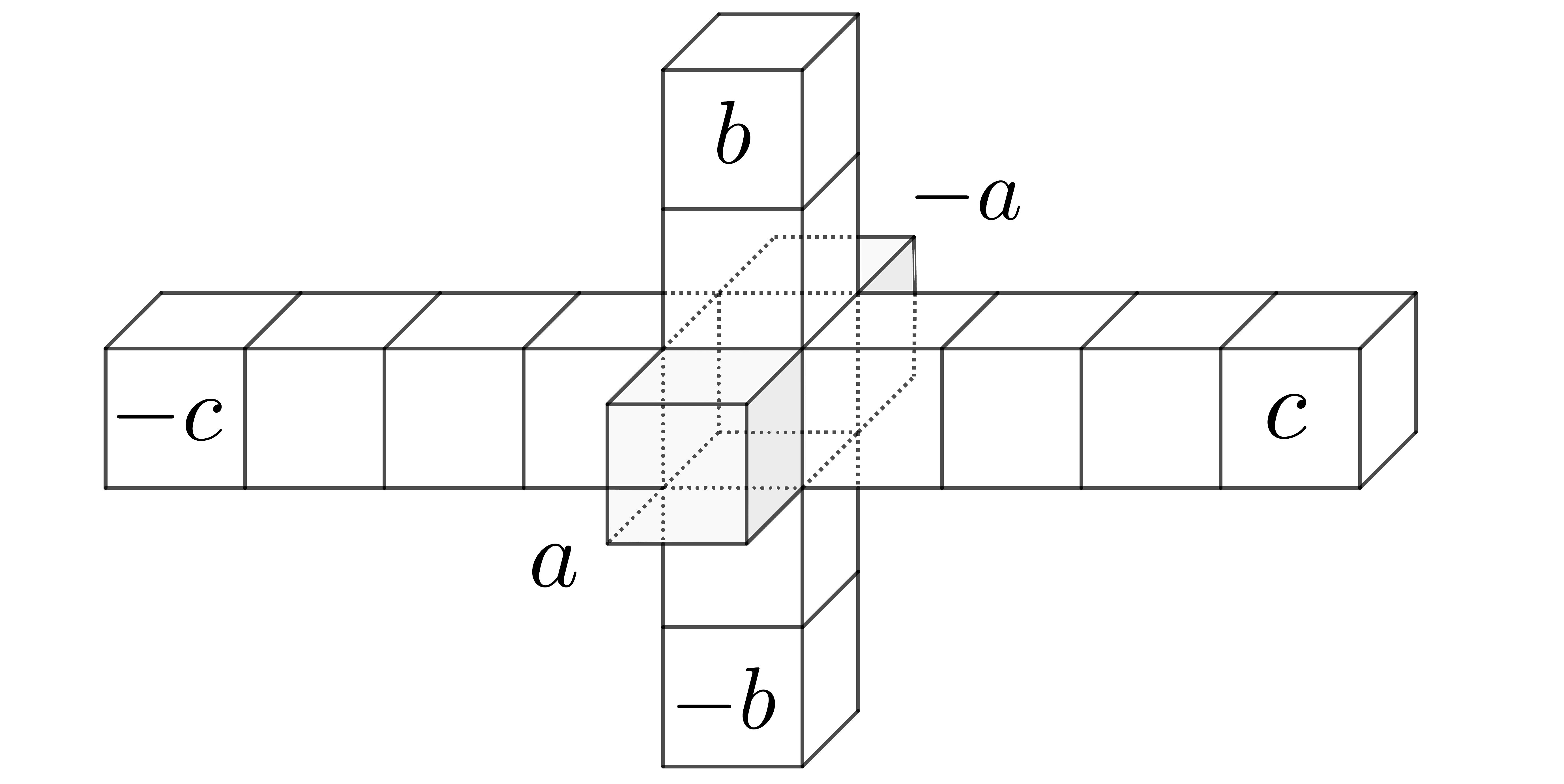}
	\caption{The neighbourhood $N_{a,b,c}$ with $a=1, b=2$ and $c=4$. The $e_1$-axis is towards the reader, the 
	$e_2$-axis is vertical, and the $e_3$-axis is horizontal.}
	\label{figanis3d}
\end{figure}
These models were studied by van Enter and Fey \cite{AA12} for $r=a+b+c$; 
they determined the following bounds on the critical length, 
as $p\to 0$,

\begin{equation}\label{vefey}
		\log \log L_c\left(\mathcal N_{a+b+c}^{a,b,c},p\right)= 
	\begin{cases}
	\Theta\left(p^{-a}\right)                    & \textup{if } b=a, \\
	\Theta\left(p^{-a}(\log \frac 1p)^{2}\right) & \textup{if } b>a.
	\end{cases}
	\end{equation}

Note that, by (\ref{vefey}) the critical length is doubly exponential in $p$ when $r=a+b+c$. 
It is not difficult to show that the critical length is polynomial in $p$ if $r\le c$. 

On the other hand, the critical length is singly exponential in the case $r=c+1$; indeed,
we determine the magnitude of the critical length up to a constant factor in the exponent. 

The following is our main result.
\begin{theor}\label{target0}
As $p\to 0$, 
\begin{equation}\label{magnitud}
\log L_c\left(\mathcal N_{c+1}^{a,b,c},p\right)=  \begin{cases}
\Theta\left(p^{-1/2}\right)  & \textup{if } c=b=a, \\
\Theta\left(p^{-1/2}(\log \frac 1p)^{1/2}\right) & \textup{if } c=b>a, \\
\Theta\left(p^{-1/2}(\log \frac 1p)^{3/2}\right) & \textup{if } c\in \{b+1,\dots,a+b-1\}, \\
\Theta\left(p^{-1}\right)                                  &   \textup{if } c=a+b,   \\
\Theta\left(p^{-1}(\log \frac 1p)^{2}\right)   & \textup{if } c> a+b. 
	\end{cases}
\end{equation}
\end{theor}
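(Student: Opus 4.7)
The plan is to establish matching upper and lower bounds on $\log L_c$ in each of the five parameter regimes. Both rest on the following geometric observation: with $r=c+1$, a fully infected slab of shape $R\times\{h,h+1,\ldots,h+c-1\}$ in $[L]^3$ is self-sustaining in the $e_3$ direction, because every vertex $(u,h+c)$ with $u\in R$ already has $c$ infected $e_3$-neighbours inside the slab and therefore needs just one additional infected neighbour to become infected. Hence, once such a slab exists, it grows freely in the $\pm e_3$ directions by a $1$-neighbour rule triggered by a single seed in each new slice. Horizontal growth of $R$ inside $[L]^2$, however, cannot exploit $e_3$-neighbours of the slab, so it is governed by the genuinely two-dimensional process $\mathcal N_{c+1}^{a,b}$ on a slice.

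For the \emph{upper bound}, we identify for each regime a critical droplet whose probability of being internally filled matches the claimed $\log L_c$, so that a union bound over a tiling of $[L]^3$ by disjoint copies of the droplet yields the bound. In cases~4 and~5 ($c\ge a+b$), the critical droplet is essentially a two-dimensional droplet inside a single slice whose growth is assisted by $e_3$-seeds in adjacent slices (which supply extra infected neighbours, effectively lowering the in-slice threshold); the fully infected slice is then lifted to $c$ consecutive slices in the same manner. In cases~1--3 ($c\le a+b-1$), a single slice cannot be infected efficiently and the nucleation is genuinely three-dimensional: the critical droplet is built from many \emph{beams}---infected columns of length $\ge c$ in the $e_3$ direction, each requiring a linear-in-its-length number of carefully placed seeds---arranged into a horizontal pattern dense enough to trigger a $\mathcal N_{c+1}^{a,b}$-type sweep through an intermediate slab.

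For the \emph{lower bound} we invoke the Aizenman--Lebowitz framework: if $[L]^3$ percolates, then for each $k$ between $1$ and $L$ there is an internally filled rectangle of linear size approximately $k$; choosing $k$ at the critical droplet scale and applying a union bound,
\begin{equation*}
\Pr_p\bigl(\genA=[L]^3\bigr)\;\le\;L^3\cdot\Pr_p\bigl(\text{some rectangle of critical size is internally filled}\bigr).
\end{equation*}
Bounding the right-hand side is the main novel contribution of the paper. We introduce the \emph{beams process}, an algorithm that reads off from any internally filled rectangle a collection of beams whose total length is comparable to the perimeter of the rectangle. Since each beam of length $\ell$ necessitates a linear-in-$\ell$ number of seeds in prescribed positions (with density depending on the regime), the probability that all extracted beams appear simultaneously decays exponentially in the rectangle's dimensions at the rate required to match the upper bound.

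The main obstacle is proving this exponential decay uniformly across the five regimes. Designing the beams process so that the extracted beams cover the perimeter, do not share seeds, and admit independent tail bounds is delicate---particularly in the isotropic case $a=b=c$, where no direction is preferred. At the opposite extreme, the strongly anisotropic case $c>a+b$ reduces, through the beams framework, to the two-dimensional anisotropic estimate of Duminil-Copin and van Enter.
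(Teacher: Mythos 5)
Your high-level scheme (matching bounds via an Aizenman--Lebowitz argument, with a beams process driving the lower bound) is aligned with the paper, but there is a genuine gap at the heart of the lower-bound argument: the assertion that ``each beam of length $\ell$ necessitates a linear-in-$\ell$ number of seeds in prescribed positions'' is not a proof, it is exactly what needs the paper's main new tool. The seeds are \emph{not} in prescribed positions. The 2D projection $H$ of a covered beam $H\times[w]$ is a connected set that is \emph{closed} under the two-dimensional update family $\mathcal N_{a+b+1}^{a,b}$, and you must bound $\Pr_p(|H|\ge k)$ when the projected seeds may lie anywhere. The paper does this via the new Theorem~\ref{expdecIntro}: for any 2D family $\UU$ with $\Ss(\UU)=S^1$ (which is why one takes threshold $m=a+b+1$, the smallest $m$ making $\mathcal N_m^{a,b}$ subcritical with full stable set) and $p$ small, the cluster of the origin satisfies $\Pr_p(|\mathcal K|\ge n)\le e^{-\Omega(n)}$. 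Proving this requires constructing \emph{inwards stable droplets}, introducing the \emph{dilation radius} to prove an extremal lemma (that $|\mathcal K|\ge n$ forces $\Omega(n)$ seeds inside a connected tree of size $O(n)$), and a union bound over lattice trees via the bound $|\mathcal T_{\le n}|\le (3e)^n$. You also need a coupling step showing the projected 3D seeds are dominated by an $\varepsilon$-random 2D initial set, and, in case $c>a+b$, a ``coarse'' version of the beams process on a $(b+1)\times(b+1)$-rescaled grid with $\varepsilon=p^{1/3}$ to win the extra $(\log\tfrac 1p)^2$ factor. None of this machinery appears in your proposal, and without it the lower bound in cases 4--5 does not close.

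You also assign the beams process to the wrong regimes. The paper uses it \emph{only} for $c\ge a+b$ (cases 4--5); the lower bounds in cases 1--3 are handled by a simpler \emph{components process} (a variant of the BDMS spanning algorithm, with no beams and no exponential decay theorem), and the isotropic case $a=b=c$ which you flag as the delicate one for the beams process is in fact the easiest, handled directly as in Aizenman--Lebowitz. Conversely, your picture of the upper bound in cases 1--3 as ``nucleation built from beams'' does not match: there the paper uses standard supercritical-face and critical-face growth lemmas (a single seed on the $e_3$-face, triangular seed patterns on the $e_1$- and $e_2$-faces). A further inaccuracy: horizontal growth of a slab \emph{does} exploit $e_3$-neighbours --- of freshly placed seeds on the adjacent face, just not of the slab itself --- so the relevant face processes are $\mathcal N_{c+1-a}^{b,c}$ and $\mathcal N_{c+1-b}^{a,c}$, not $\mathcal N_{c+1}^{a,b}$; the family $\mathcal N_{a+b+1}^{a,b}$ enters only on the lower-bound side as the subcritical 2D process governing the $e_3$-projection of covered beams.
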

The first three cases of this theorem ($c<a+b$) are obtained by adapting standard ideas used for two-dimensional models.
However, to deal with the lower bounds in the last two cases ($c\ge a+b$), it is necessary to introduce a new algorithm
which we call the {\it beams process}, and to develop new tools in {\it subcritical} bootstrap percolation
(see Theorem \ref{expdecIntro}).


\subsection{The BSU model}
The model we study here is a special case of the following extremely general class of $d$-dimensional monotone cellular automata, which were introduced by Bollob\'as, Smith and Uzzell~\cite{BSU15}.

Let $\U=\{X_1,\dots,X_m\}$ be an arbitrary finite family of finite
subsets of $\Zd\setminus \{0\}$. We call $\U$ the {\it update family}, 
each $X\in\U$ an {\it update rule}, and the process itself $\U${\it-bootstrap percolation}.
Let $\Lambda$ be either $\Zd$ or $\Zd_L$ (the $d$-dimensional torus of sidelength $L$).
Given a set $A\subset \Lambda$ of initially {\it infected} sites, set $A_0=A$, and define for each $t\ge 0$,
\[A_{t+1}=A_t\cup\{x\in\Lambda: x+X\subset A_t \text{ for some }X\in\U\}.\]
The set of eventually infected sites is the {\it closure} of $A$, denoted by
$\genA_\U=\bigcup_{t\ge 0}A_t$, and
we say that there is {\it percolation} when $\genA_\U=\Lambda$.


 Let $S^{d-1}$ be the unit $(d-1)$-sphere and denote the discrete half space orthogonal to $u\in S^{d-1}$ as
 $\dhp^d:=\{x\in\Zd:\langle x,u\rangle <0\}$.
The {\it stable set} $\Ss=\Ss(\U)$ is the set of all $u\in S^{d-1}$
such that no rule $X\in\U$ is contained in $\dhp^d$. 
Let $\mu$ denote the Lebesgue measure on $S^{d-1}$. The following classification of families was proposed in \cite{BSU15} for $d=2$ and extended to all dimensions in \cite{BDMS15}:
A family $\U$ is

\begin{itemize}
 \item {\it subcritical} if for every hemisphere $\mathcal H \subset S^{d-1}$ we have $\mu(\mathcal H \cap\Ss)>0$.
 \item {\it critical} if there exists a hemisphere $\mathcal H \subset S^{d-1}$ such that $\mu(\mathcal H \cap\Ss)=0$, and
every open hemisphere in $S^{d-1}$ has non-empty intersection with $\Ss$;
 \item {\it supercritical} otherwise. 
 \end{itemize}

Subcritical families exhibit a behavior which resembles models in classical site percolation,
(see e.g. \cite{BBPS16,H19+}).
For a certain class of subcritical models, we have succeeded in proving an exponential decay property
about the cluster size (see Section \ref{SectionexpdecayK}): denote by $\mathcal K$ the connected component containing $0$ in
$\langle A\rangle_\UU$. 
\begin{theor}\label{expdecIntro}
Assume that $d=2$. Consider 
$\UU$-bootstrap percolation with $\Ss(\UU)=S^1$ and $A\sim \bigotimes_{v\in \Zdd}\textup{Ber}(p)$. If $p$ is small enough, then
	\begin{equation}
	\p_p(|\mathcal K|\ge n)\le 
     e^{-\Omega(n)},
	\end{equation}
	for every $n\in\mathbb N$.
\end{theor}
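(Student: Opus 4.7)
The hypothesis $\Ss(\UU) = S^1$ is much stronger than mere subcriticality: every direction of the plane is stable. The plan is to exploit this through a convex-hull trapping lemma, and then dominate $\mathcal K$ by a cluster in a subcritical site-percolation process on a renormalised lattice.

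\textbf{Step 1 (convex-hull trapping).} I would first prove that $\langle A \rangle_\UU \subseteq \operatorname{conv}(A)$ for every finite $A \subseteq \Zdd$. For any $u \in S^1$, stability of $u$ means that no rule $X \in \UU$ lies entirely in $\{\langle \cdot, u\rangle < 0\}$, hence each $X$ contains some $x$ with $\langle x, u\rangle \geq 0$. If $y$ is newly infected via $X$, the witness $y+x$ is already infected and $\langle y, u\rangle \leq \langle y+x, u\rangle$. Induction on infection time gives $\max_{z \in \langle A\rangle_\UU}\langle z, u\rangle = \max_{a \in A}\langle a, u\rangle$ for every $u \in S^1$, and intersecting over $u$ yields the inclusion.

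\textbf{Step 2 (renormalisation).} Let $D = \max_{X \in \UU}\operatorname{diam}(X)$, fix a large scale $\ell = \ell(\UU)$, and partition $\Zdd$ into blocks of side $\ell$. Declare a block \emph{bad} if a $D$-enlargement of it meets $A$. For $p$ small, the probability that a fixed block is bad is $O(\ell^2 p)$, which lies well below the critical threshold for site percolation on the block lattice. Hence the bad blocks are dominated by a subcritical Bernoulli configuration, and the bad-block cluster $\mathcal D$ containing the origin satisfies $\Prob_p(|\mathcal D| \geq m) \leq e^{-c(p)\, m}$ by classical exponential decay in subcritical 2D site percolation.

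\textbf{Step 3 (geometric containment).} The key step is to show that the connected component $\mathcal K$ of $0$ in $\langle A\rangle_\UU$ lies inside an $\ell$-thickening of a single bad-block cluster. The intuition is that a rule application can only infect a site at distance at most $D$ from an already-infected site, so the closure cannot appear ``out of nothing'' in a wide $A$-free region. Combined with the convex-hull trapping of Step~1, this forces each connected component of $\langle A\rangle_\UU$ into an $\ell$-thickening of one bad-block cluster. Therefore $|\mathcal K| \leq C\ell^2\, |\mathcal D|$ deterministically, and combining with Step~2 gives $\Prob_p(|\mathcal K| \geq n) \leq \Prob_p(|\mathcal D| \geq n/(C\ell^2)) \leq e^{-\Omega(n)}$, which is the required bound.

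\textbf{Main obstacle.} Step~3 is the delicate point: the convex-hull bound alone does not prevent two widely separated bad-block clusters from producing overlapping closures, since the convex hull of a union can be far larger than the union of individual convex hulls. One must combine the bounded propagation range $D$ of individual rule applications with the convex-hull rigidity in a careful inductive argument to verify that no chain of rules can bridge a wide $A$-free strip. This local-global interplay is the technical heart of the proof.
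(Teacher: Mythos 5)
Your Step~1 is correct as stated, and your ``Main obstacle'' paragraph correctly identifies where the argument breaks down --- but you have not resolved that obstacle, and it cannot be resolved from Steps~1 and~2 alone. The convex-hull inclusion $\langle A\rangle_\UU \subseteq \operatorname{conv}(A)$ is genuinely weaker than what the proof needs. Applied to $A=A_1\cup A_2$ with $A_1$, $A_2$ separated by a wide $A$-free strip, $\operatorname{conv}(A)$ contains the entire strip, so the inclusion says nothing about whether infection bridges it; and bounded rule diameter alone certainly does not prevent bridging, as supercritical and critical models (which also have bounded-diameter rules) make clear. What the proof actually requires is a \emph{local} statement: the existence of a finite \emph{inwards stable droplet} $D$ satisfying $\langle \Zdd\setminus D\rangle_\UU=\Zdd\setminus D$ (Lemma~\ref{existeindrop}, from~\cite{BBPS16}). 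Translating $D$ to any target site $x$ immediately gives the dilation-radius property (Corollary~\ref{dilation}): every $x\in\langle A\rangle_\UU$ satisfies $A\cap B_{\hat\rho}(x)\neq\emptyset$ for a fixed constant $\hat\rho=\hat\rho(\UU)$. The convex-hull argument is of no help in proving inward stability --- taking $A=\Zdd\setminus D$, the set $A$ lies in no half-plane, so the bound is vacuous --- and proving it instead requires the ``circular segments cut off by chords'' analysis of rules acting on a large discrete ball, which is precisely where the full strength of $\Ss(\UU)=S^1$ enters. A secondary point: even granted the local lemma, your bad-block definition should enlarge by $\hat\rho$ rather than by $D=\max_{X\in\UU}\operatorname{diam}(X)$, since the dilation radius can be much larger than the rule diameter.

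Given the dilation-radius lemma, your renormalisation route in Steps~2--3 would indeed close the argument. The paper takes a different path at that point: rather than dominating by subcritical site percolation, it proves an extremal lemma (Lemma~\ref{extremaldeK}) stating that $|\mathcal K|\ge n$ forces a lattice tree $T\ni 0$ with $|T|\le n$ and $|A\cap T|\ge n/(30\beta^2)$, then takes a union bound over such trees using the count $|\mathcal T_{\le n}|\le (3e)^n$ (Proposition~\ref{nofsubt}). This is a self-contained Peierls argument that avoids invoking exponential decay for two-dimensional site percolation as a black box. Both routes hinge on the same local ingredient, which is exactly what is missing from your proposal.
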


For dimension $d=2$, Bollob\'as, Duminil-Copin, Morris and Smith proved a universality result in \cite{BDMS15}, 
determining the critical length (with $A\sim \bigotimes_{v\in \Zdd_L}$Ber$(p)$)
\[L_c(\U,p):= \min\{L\in\mathbbm N: \pp_p(\genA_\U=\Zd_L)\ge 1/2\},\]
up to a constant factor in the exponent for all two-dimensional critical families $\U$, which we can briefly state as follows.
\begin{theor}[Universality]
	Let $\U$ be a critical two-dimensional family. There exists a computable positive integer
	$\alpha=\alpha(\U)$ such that, as $p\to 0$, either
	\begin{equation}
	\log L_c(\U,p) =\Theta(p^{-\alpha}),
	\end{equation}
	or
	\begin{equation}
	\log L_c(\U,p) =\Theta(p^{-\alpha}(\log \tfrac 1p)^2).
	\end{equation}
\end{theor}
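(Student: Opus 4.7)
The plan is to follow the scheme of Bollob\'as--Duminil-Copin--Morris--Smith~\cite{BDMS15}: attach to $\UU$ a geometric ``difficulty'' invariant $\alpha(\UU)\in\mathbb N$ derived from the stable set $\Ss$, and isolate the dichotomy between the two alternatives via a balance condition. For each $u\in\Ss(\UU)$ I would define the difficulty $\alpha(u)$ to be the smallest number of additional seeds one must place in a thin slab next to the half-plane $\mathbb H_u=\{x\in\Zdd:\langle x,u\rangle<0\}$ so that the $\UU$-process started from $\mathbb H_u$ together with those seeds eventually infects a site of $\Zdd\setminus\mathbb H_u$. Because $\UU$ is critical the stable set $\Ss$ is finite, each $\alpha(u)$ is finite, and the invariant
\begin{equation*}
\alpha(\UU) \;:=\; \min_{\mathcal H}\;\max_{u\in\mathcal H\cap\Ss}\alpha(u),
\end{equation*}
with $\mathcal H$ ranging over closed semicircles of $S^{1}$, is a well-defined positive integer. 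I would call $\UU$ \emph{balanced} if for every closed semicircle $\mathcal H$ one has $\max_{u\in\mathcal H\cap\Ss}\alpha(u)=\alpha(\UU)$, and \emph{unbalanced} otherwise; these two cases correspond respectively to the first and second alternatives in the theorem.

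For the upper bound I would produce, inside $\Zdd_{L}$, a ``nucleus'' of diameter $p^{-O(1)}$ which then grows to fill the torus. The droplet shape is dictated by a minimising semicircle $\mathcal H$: its sides are perpendicular to the directions realising the max in the definition of $\alpha(\UU)$. A nucleus of side $p^{-O(1)}$ appears in a prescribed location with probability $p^{O(1)}$ by an explicit local construction. Extending such a droplet by one row along a side of difficulty $\alpha(u)$ costs at most $\ell p^{\alpha(u)}$ when the current side has length $\ell$; summing over dyadic scales gives total cost $\exp(-c p^{-\alpha(\UU)})$ in the balanced case. In the unbalanced case, the ``easy'' semicircle allows and in fact forces the optimal droplet to be elongated with aspect ratio of order $\log(1/p)$, so integrating the growth costs over the two kinds of sides produces the extra $(\log 1/p)^{2}$ factor. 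Taking $L$ a constant factor above the resulting threshold and applying an Aizenman--Lebowitz-style union bound over possible nucleation sites then yields percolation with probability at least $1/2$.

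For the matching lower bound I would run a multi-scale Peierls/renormalisation argument. Tile $\Zdd_{L}$ by cells and declare a cell \emph{bad} if it contains atypically many initially infected sites. Percolation on scale $L$ forces a connected chain of growth events joining bad cells, and each step of this chain requires the appearance, in a small window and along a direction of difficulty at least $\alpha(\UU)$, of enough seeds to overcome that direction, an event of probability $O(p^{\alpha(\UU)})$. Summing along minimal chains across dyadic scales recovers the matching exponent. The role of Theorem~\ref{expdecIntro} is essential here: after one discards the update rules whose outward normals lie outside a minimising semicircle, the residual dynamics is subcritical, so growth in the ``easy'' directions cannot propagate without fresh seeds, and one is therefore justified in charging only the seeds that lie along the hard directions.

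The main obstacle is the classification step: showing that the combinatorial quantity $\alpha(\UU)$ really is the operative exponent uniformly over all critical families, and that the balanced--unbalanced dichotomy is exactly what switches the $(\log 1/p)^{2}$ correction on. This requires a careful analysis of how the finitely many but arbitrarily arranged rational stable directions of different difficulties interact during droplet growth, and in particular of the ``shortcuts'' by which low-difficulty or unstable directions might covertly contribute to growth along a hard direction. Ruling out such shortcuts is precisely what the subcritical exponential decay of Theorem~\ref{expdecIntro} guarantees; with that structural input in hand, the probabilistic upper and lower bounds slot together in a fairly standard fashion.
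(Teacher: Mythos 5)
The paper does not prove this theorem; it is quoted as a known result from Bollob\'as--Duminil-Copin--Morris--Smith~\cite{BDMS15}, and is used here only to orient the reader. So there is no internal proof to compare against, and what you have written is a reconstruction of the BDMS argument. That reconstruction has the right overall shape (difficulty invariant, balanced/unbalanced dichotomy, droplet growth upper bound, hierarchy-style lower bound), but it contains several inaccuracies.

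First, you assert that criticality forces $\Ss(\UU)$ to be finite and every $\alpha(u)$ to be finite. Neither is true: a critical two-dimensional family can have whole arcs in its stable set (the Duarte-type families are the canonical examples), and $\alpha(u)=\infty$ for any $u$ in the interior of such an arc. What criticality gives is only that \emph{some} closed semicircle meets $\Ss$ in a finite set of isolated rational directions, each of finite difficulty; it is over those semicircles that the $\min\max$ is taken and turns out finite. Relatedly, your ``balanced'' condition, requiring every closed semicircle to achieve the same maximum, is stronger than the BDMS notion and would misclassify many families; the dichotomy in BDMS is about whether there is a single minimising semicircle in which \emph{all} stable directions have difficulty at most $\alpha$, versus being forced to pair a direction of difficulty $\alpha$ with one of strictly smaller difficulty.

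Second, and more substantively, you claim that Theorem~\ref{expdecIntro} is ``essential'' to the lower bound, because after discarding rules whose normals lie outside the minimising semicircle the residual process is subcritical and one can invoke exponential decay. This is not how the BDMS lower bound works, and it cannot work as stated: the residual family need not satisfy $\Ss=S^{1}$, which is the hypothesis of Theorem~\ref{expdecIntro}, and indeed Theorem~\ref{expdecIntro} is a \emph{new} result of the present paper (postdating~\cite{BDMS15}) introduced to control the beams process in the three-dimensional lower bounds of Section~\ref{SectionLowerBeams}, not to prove two-dimensional universality. The BDMS lower bound instead proceeds via a spanning/hierarchy algorithm and an Aizenman--Lebowitz-type extremal lemma for internally spanned droplets, charging seeds along the hard directions directly, with no appeal to exponential decay for a subcritical auxiliary process. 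So while your sketch captures the flavour of the upper bound, the lower-bound mechanism you describe is a different (and, as written, incomplete) argument, and the attribution of Theorem~\ref{expdecIntro} to this proof is a genuine error.
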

 
 Proving a universality result of this kind for three (or higher) dimensions is a challenging open problem.
 However, there is a weaker conjecture concerning all critical families and all $d\ge 3$, stated by the
 authors in \cite{BDMS15}; here for simplicity we state only the case $d=3$.
\begin{conj} Let $\UU$ be a critical three-dimensional family. 
As $p\to 0$, either
	\begin{equation}\label{2cri}
	 \log L_c(\UU,p)=p^{-\Theta(1)},
	\end{equation}
	or
	\begin{equation}\label{3cri}
	 \log \log L_c(\UU,p)=p^{-\Theta(1)}.
	\end{equation}
\end{conj}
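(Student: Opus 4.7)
The plan is to prove the dichotomy by associating to each critical three-dimensional family $\UU$ an induced two-dimensional family $\UU_u$ for each stable direction $u \in \Ss(\UU)$, namely the family of update rules $X \in \UU$ with $X \subseteq \dhp^3 \cup u^\perp$, projected onto $u^\perp$. This $\UU_u$ describes how infection propagates along the ``top face'' of a droplet whose lower half-space is already fully infected. The regime of $L_c(\UU,p)$ should then be governed by whether some $\UU_u$ is itself critical or supercritical (Case A, singly exponential) or whether all such $\UU_u$ are subcritical (Case B, doubly exponential).

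For the general upper bound, since every open hemisphere of $S^2$ meets $\Ss(\UU)$, a Bollob\'as--Smith--Uzzell nucleation argument produces a bounded-dimension ``critical droplet'' whose closure is all of $\Z^3$; bounding the probability of creating such a droplet somewhere in $[L]^3$ gives $L_c(\UU,p) \leq \exp(\exp(p^{-C}))$. For the general lower bound $\log L_c(\UU,p) = \Omega(p^{-c})$, I would adapt the Aizenman--Lebowitz covering argument used in this paper: percolation forces a chain of internally filled boxes of polynomial sidelength in $1/p$, each appearing with probability at most $\exp(-p^{-c})$.

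To complete the dichotomy, in Case A the projected family $\UU_u$ is at least critical for some $u$, so inside a slab of polynomial thickness the closure of a Bernoulli set fills a singly-exponential region with non-negligible probability; combining this with a beams-type propagation slab-by-slab in direction $u$ yields a percolating configuration at side length $\exp(p^{-\Theta(1)})$, which matches the general lower bound. In Case B every $\UU_u$ is subcritical, so by Theorem~\ref{expdecIntro} the induced closure inside any single slab remains bounded with high probability, and each advance from one slab to the next requires an independent nucleation event of probability at most $\exp(-p^{-\Theta(1)})$; iterating such events along the $u$-axis then forces the doubly-exponential regime.

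The main obstacle is making the projection $\UU \mapsto \UU_u$ genuinely exhaustive and coherent: in three dimensions the stable set can contain isolated points, arcs of different curvatures, and higher-dimensional features, and one must verify that the behaviour of $\UU_u$ does not oscillate between Cases A and B as $u$ varies over the relevant portion of $\Ss(\UU)$. The corresponding 2D universality analysis of Bollob\'as--Duminil--Copin--Morris--Smith is already intricate, and extending it will likely require a genuine 3D hierarchy argument controlling how droplets of distinct anisotropic shapes merge across multiple ``active'' stable directions simultaneously, going well beyond the beams process and Theorem~\ref{expdecIntro} developed here.
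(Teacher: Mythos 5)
This statement is a \emph{conjecture}, attributed in the paper to Bollob\'as, Duminil-Copin, Morris and Smith; the paper does not prove it, and indeed states explicitly that ``Proving a universality result of this kind for three (or higher) dimensions is a challenging open problem.'' So there is no proof in the paper to compare your proposal against, and a proposal that settled the conjecture would be a major new result rather than a rederivation.

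Your sketch has several genuine gaps, most of which you candidly flag yourself. The most serious is that your Case B leans on Theorem~\ref{expdecIntro}, but that theorem only covers two-dimensional families with $\Ss(\UU)=S^1$, i.e.\ the extreme end of the subcritical class where \emph{every} direction is stable. A projected family $\UU_u$ coming from a general critical $3$D family will typically be subcritical with $\Ss(\UU_u)\neq S^1$, and for that class the exponential decay of the cluster size is open (this is exactly Problem~\ref{probExDecay} in the paper). Without it, the step ``the induced closure inside any single slab remains bounded with high probability'' has no support. Second, the ``induced family'' $\UU \mapsto \UU_u$ is not a well-defined classification device as stated: a rule $X$ may intersect $u^\perp$ in ways that don't project to a meaningful $2$D rule, the relevant projected family changes discontinuously as $u$ moves across $\Ss(\UU)$, and there is no reason a priori that the sup over $u$ of the difficulty of $\UU_u$ captures $L_c(\UU,p)$ (your Case A requires a single $u$ to be ``active,'' but growth in $3$D may require coordinated nucleation in several faces simultaneously, as it does already in the $\mathcal N_r^{a,b,c}$ models where the upper-bound lemmas use two or three face events at once). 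Third, the general lower bound you assert, that percolation forces internally spanned boxes of polynomial sidelength with probability $\exp(-p^{-c})$, does not follow from the Aizenman--Lebowitz argument alone: that argument gives the existence of intermediate-size spanned rectangles, but turning this into an exponential-in-$p^{-c}$ probability bound for a \emph{general} critical family is exactly where the difficulty lies, and is not automatic. In short, the proposal is a reasonable roadmap of ideas one would want to try, but each of the four bullet points in the dichotomy requires a new theorem that is not available in this paper or in the literature it cites.
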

Let us say that $\UU$ is $2$-{\it critical} if it satisfies condition (\ref{2cri}),
and is $3$-{\it critical} if it satisfies condition (\ref{3cri}).
Observe that we can also think of our $\mathcal N_{r}^{a,b,c}$-model as $\mathcal N_{r}^{a,b,c}$-bootstrap percolation,
where $\mathcal N_{r}^{a,b,c}$ is the family consisting of all subsets of size $r$ of the neighbourhood 
$N_{a,b,c}$ in (\ref{neigh3}).
It is easy to check that the family $\mathcal N_{r}^{a,b,c}$ is critical if and only if 
 \[r\in\{c+1,\dots,a+b+c\}.\]
Moreover, it turns out that the family $\mathcal N_{r}^{a,b,c}$ is 2-critical for all $r\in\{c+1,\dots,c+b\}$
(see Remark \ref{coveredcases}).
On the other hand, the family $\mathcal N_{a+b+c}^{a,b,c}$ is 3-critical by (\ref{vefey});
it is natural to conjecture that this is the case for all $r\in \{c+b+1,\dots,c+b+a\}$.

\subsection{Outline of the proof}
The proofs of all upper bounds are obtained by adapting standard arguments in bootstrap percolation;
the same is true for the lower bounds in the cases $c<a+b$. 

We deal with the lower bounds in the cases $c\ge a+b$ by introducing an algorithm
that we call the {\it beams process}, which will allow us to control the size of the components that can be created in the intermediate steps of the bootstrap dynamics, the trick will be to cover such components with beams (a {\it beam} is a finite 3-dimensional set of the form $H\times[w]$, where $H\subset\Zdd$ is 
 connected and $\langle H\rangle_{\mathcal N_{a+b+1}^{a,b}}=H$, see Definition \ref{beam}).
All initially infected sites are beams, and at every step we merge beams that are within some constant distance, to create a bigger one, then repeat this algorithm and stop it at some finite time; each beam created during the process we call {\it covered}. When we observe the induced process along the $e_3$-direction, it looks like subcritical two-dimensional $\mathcal N_{a+b+1}^{a,b}$-bootstrap percolation,
thus, we can couple the original process and apply the exponential decay property (Theorem \ref{expdecIntro})
 to bound the probability of a beam been covered.

Theorem \ref{expdecIntro} provides new machinery in subcritical bootstrap percolation, we prove it in Section \ref{SectionexpdecayK},
and here we summarize the core idea. First, we need to guarantee the existence of {\it inwards stable droplets},
which are, basically, discrete polygons that can not be infected from outside;
it is possible to show the existence of such droplets by considering families $\U$ such that $\Ss(\U)=S^1$. After that, we combine ideas used by Bollob\'as-Riordan 
in classical percolation models to prove that, when the density of initially infected sites is small enough, then the size of the cluster containing the origin decays exponentially fast, in distribution.

\section{Upper bounds}\label{SectionUpper12}
To prove upper bounds, it is enough to give one possible way of growing from $A$
step by step until we fill the whole of $[L]^3$. 
The case $c>a+b$ will be deduced in the Appendix as a particular case of Proposition
\ref{genuppbound} (see Remark \ref{coveredcases}). On the other hand, the proof of case $c=b=a$
is similar to the proof given in \cite{AL88} and we will omit here.
Hence, we will focus only on the remaining upper bounds in Theorem \ref{target0} in increasing order of technicality.

More precisely, we will give a full proof of the case
$c\in\{b+1,\dots, a+b-1\}$ in Subsection \ref{fullproof}, then we will only sketch the cases
$c=a+b$ and $c=b>a$ in Subsections \ref{sketch1} and \ref{sketch2}, respectively, by pointing out the
small differences between these cases.
\begin{defi}\label{intfilled}
A {\it rectangle} is a set of the form $R=[x]\times[y]\times[w]\subset\Zdt$. 
We say that a rectangle $R$ is {\it internally filled} if $R\subset \langle A\cap R\rangle_{\mathcal N_{r}^{a,b,c}}$,
and denote this event by $I^\bullet(R)$.
\end{defi}

\subsection{Case $c\in\{b+1,\dots, a+b-1\}$}\label{fullproof}
In this section we consider the families
$\mathcal N_{c+1}^{a,b,c},$
with $c\in\{b+1,\dots, a+b-1\}$ (here $a>1$, otherwise this case does not exist).
As usual in bootstrap percolation, we actually prove a stronger proposition.
\begin{prop}\label{upper2}
Fix $c\in\{b+1,\dots, a+b-1\}$ and  consider $\mathcal N_{c+1}^{a,b,c}$-bootstrap percolation.
There exists a constant $\Gamma=\Gamma(c)>0$ such that, if \[L=\exp\left(\Gamma p^{-1/2}(\log \tfrac 1p)^{3/2}\right),\] then
 $\p_p\left(I^\bullet([L]^3)\right)\to 1,\ as\ p\to 0.$
\end{prop}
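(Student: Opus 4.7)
The plan is to use the Aizenman--Lebowitz multi-scale scheme adapted to three-dimensional anisotropic dynamics. The strategy has four ingredients: identifying three slab-growth mechanisms on an internally filled rectangle, constructing a critical seed whose internal filling probability matches the target exponent, verifying that such a seed grows to $[L]^3$ with high probability, and tiling $[L]^3$ to guarantee the existence of at least one internally filled seed.

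I would first analyse three slab-growth mechanisms on an internally filled rectangle $R = [x_1] \times [x_2] \times [x_3]$ with $x_1 \ge a$, $x_2 \ge b$, $x_3 \ge c$. To extend $R$ by one plane in the $+e_3$ direction, a new vertex sees $c$ infected neighbours in $-e_3$, so needs only one more infected neighbour anywhere in its in-plane neighbourhood; the induced 2D dynamic on this plane is $\mathcal{N}_1^{a,b}$-bootstrap (that is, connectivity in the neighbourhood graph), so the whole plane is filled as soon as it contains at least one initial infection. To extend $R$ in $\pm e_1$, a new vertex sees $a$ infected neighbours from the bulk and needs $c-a+1$ more, inducing the 2D $\mathcal{N}_{c-a+1}^{b,c}$-bootstrap on the transverse plane; since $c \le a+b-1$ gives $c-a+1 \le b \le c$, this 2D family has empty stable set and is therefore supercritical, with polynomial critical length in $1/p$. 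The extension in $\pm e_2$ is symmetric, inducing the supercritical 2D family $\mathcal{N}_{c-b+1}^{a,c}$.

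Next, I would construct a critical seed $R_0 = [X_0] \times [Y_0] \times [Z_0]$ with $Z_0 \ge c$, whose dimensions are calibrated so that $R_0$ is internally filled with probability at least $\exp(-\Gamma_0 p^{-1/2}(\log \tfrac{1}{p})^{3/2})$ for some $\Gamma_0 > 0$. The seed is filled via an explicit cascade: starting from a nucleus of constant size, one extends by a sequence of single-plane additions of the three types above, each step succeeding with the appropriate in-plane 2D probability. Multiplying the per-step probabilities and optimising over the order of cascade steps and the shape of $R_0$ produces the target exponent; the power $3/2$ of the logarithm emerges from balancing the three directional costs in the regime $c \in \{b+1, \dots, a+b-1\}$. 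Conditioning on $R_0$ being internally filled, I would then show it grows to $[L]^3$ with probability $1-o(1)$ via iterated doubling of scales: at each stage, $e_3$-growth succeeds provided each of the relevant $O(L)$ planes contains an initial infection, an event whose complement has probability at most $L(1-p)^{X_0 Y_0} = o(1)$ once $X_0 Y_0 \, p \gg \log L$; growth in $\pm e_1, \pm e_2$ reduces to supercritical 2D processes on transverse regions far exceeding the supercritical critical length, and succeeds with exponentially small failure probability by union bound. Finally, tiling $[L]^3$ by $\asymp L^3/(X_0 Y_0 Z_0)$ disjoint translates of $R_0$ gives at least one internally filled translate with probability $1-(1-q_{\mathrm{seed}})^{L^3/(X_0 Y_0 Z_0)}$, which tends to $1$ for $L = \exp(\Gamma p^{-1/2}(\log \tfrac{1}{p})^{3/2})$ once $\Gamma$ is sufficiently large.

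The principal obstacle is the determination of the seed probability with the sharp exponent $p^{-1/2}(\log \tfrac{1}{p})^{3/2}$. This requires designing an explicit internal cascade within $R_0$ whose probability matches the target bound, and balancing plane-addition costs across the three directions, exploiting both the supercritical character of the effective 2D dynamics in $\pm e_1, \pm e_2$ and the near-free $e_3$-growth. Together these effects make the three-dimensional critical length substantially smaller than the two-dimensional anisotropic analogue, cf.\ equation \eqref{paso1}.
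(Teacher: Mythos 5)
Your identification of the three growth mechanisms (free $e_3$-growth, supercritical induced $\mathcal N_{c-a+1}^{b,c}$ and $\mathcal N_{c-b+1}^{a,c}$ on side faces), the choice of critical droplet exponent $p^{-1/2}(\log\tfrac 1p)^{3/2}$, and the tiling argument all match the paper's strategy. However, there is a genuine gap in the middle step (growing the seed to $[L]^3$), and it is not merely a matter of detail.

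The two criteria you state for the growth phase cannot coexist with the required seed cost. You demand $X_0 Y_0\, p \gg \log L$ for the $e_3$-growth union bound, which forces $X_0 Y_0 \gg p^{-3/2}(\log\tfrac 1p)^{3/2}$ and hence $X_0+Y_0 \gg p^{-3/4}(\log\tfrac 1p)^{3/4}$; but then the cascade cost for building such a seed from a constant nucleus, with each lateral step costing $p^{\Theta(1)}$, is $\exp\big(-\Theta(p^{-3/4}(\log\tfrac 1p)^{7/4})\big)$, far cheaper than the tiling argument can afford when $\log L = \Theta(p^{-1/2}(\log\tfrac 1p)^{3/2})$. Conversely, the seed that does have the correct cost, namely $[h]^2\times[c]$ with $h\asymp p^{-1/2}(\log\tfrac 1p)^{1/2}$ (as the paper uses), satisfies only $ph^2 = \Theta(\log\tfrac 1p) \ll \log L$, so a direct $e_3$-march to depth $L$ from it fails your own criterion. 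Likewise, the side faces of this seed are only $h\times c$ (constant width in $e_3$), which is nowhere near ``far exceeding the 2D supercritical critical length'' as you claim, so your $e_1,e_2$-growth justification does not apply at the seed scale.

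What is missing is the intermediate scaling that the paper's Lemma \ref{secondstep} makes explicit: starting from $[h]^2\times[c]$, first grow in $e_3$ to depth $\asymp p^{-{c\choose 2}+1/2}(\log\tfrac 1p)^{1/2}$ (each step here succeeds with probability $\geq 1-p^{c^2}$ because $ph^2 = c^2\log\tfrac 1p$, and there are only $p^{-O(1)}$ steps); only then is the side face large enough that each lateral step succeeds with probability $\geq 1-p^{\Omega(1)}$, so one can grow the cross-section to $[h^2]^2$; now $ph^4\gg\log L$ and the $e_3$-growth to $L$ becomes cheap, after which the lateral growth to $[L]^2$ follows. Simple ``iterated doubling'' does not reproduce this asymmetric ordering automatically, since doubling the cross-section at small depth has per-step probability $\asymp p^{\Omega(1)}$ and must be repeated $\Omega(h)$ times, which is prohibitively expensive. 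You need to commit to the specific intermediate rectangle sequence, or at least to the ordering ``depth first, then width'', for the growth phase to close.
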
 
When $h,w\ge c$, for simplicity we denote the event
\[I(h,w) := I^\bullet([h]^2\times[w]).\]

\begin{lemma}\label{lemaesencial}
If $p$ is small enough, then 
\[\p_p(I(h,w+1)|I(h,w))\ge 1-e^{-p h^2},\]
under $\mathcal N_{c+1}^{a,b,c}$-bootstrap percolation.
\end{lemma}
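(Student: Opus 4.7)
The plan is to observe that, conditional on $I(h,w)$, extending the infected rectangle by one $e_3$-slice reduces to a one-neighbour two-dimensional dynamics inside the new layer. First, since $I(h,w)$ is measurable with respect to $A\cap([h]^2\times[w])$, the conditional distribution of $A\cap([h]^2\times\{w+1\})$ remains an independent $p$-random subset of that new layer.

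Next I would exploit the geometry. Any vertex $v=(x,y,w+1)$ has its $c$ downward $e_3$-neighbours $(x,y,w),(x,y,w-1),\dots,(x,y,w-c+1)$ sitting inside $[h]^2\times[w]$ (using that $w\ge c$ in the shorthand $I(h,w)$), and these are all infected on the event $I(h,w)$. So $v$ already counts $c$ infected neighbours and needs exactly one more to reach the threshold $c+1$. Hence any single infected neighbour of $v$ in the new layer, that is, in the 2D neighbourhood $N_{a,b}=\{\pm e_1,\dots,\pm a e_1,\pm e_2,\dots,\pm b e_2\}$, suffices to infect $v$.

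Consequently, the induced dynamics on $[h]^2\times\{w+1\}$ is precisely $1$-neighbour bootstrap percolation on $[h]^2$ with neighbourhood $N_{a,b}$. Because $N_{a,b}$ contains $\pm e_1,\pm e_2$, the associated graph on $[h]^2$ is connected, and $1$-neighbour bootstrap from any non-empty seed fills the whole grid. Therefore $I(h,w+1)$ holds as soon as $A\cap([h]^2\times\{w+1\})\ne\emptyset$, and this happens with probability $1-(1-p)^{h^2}\ge 1-e^{-ph^2}$.

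There is no real obstacle here; the argument is essentially geometric followed by a trivial probability estimate. The only point worth emphasising is that the bound is uniform in $w\ge c$, which is exactly what will be needed when the inequality is iterated across $e_3$-layers in the proof of Proposition \ref{upper2}.
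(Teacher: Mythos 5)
Your proof is correct and follows essentially the same route as the paper's: both observe that, conditional on $I(h,w)$, every vertex of the new slice $[h]^2\times\{w+1\}$ already has $c$ infected $e_3$-neighbours below, so a single initially infected vertex in the slice triggers a $1$-neighbour cascade that fills it, yielding $\p_p(I(h,w+1)\mid I(h,w))\ge 1-(1-p)^{h^2}\ge 1-e^{-ph^2}$. Your writeup merely makes explicit the connectedness of the $N_{a,b}$-graph that the paper leaves implicit; there is no substantive difference.
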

\begin{proof}
If $R_1:=[h]^2\times[w]$ is completely infected, we just need to infect the right-most face
$Q:=[h]^2\times\{w+1\}$, and since we have $c$ already infected vertices in $R_1$, then it is enough to find
$1$ infected vertex in $Q$ (see Figure \ref{fig3ladosuper} below). Thus,
\begin{align*}
\pp_p\left(I^\bullet\left([h]^2\times[w+1]\right)\Big|
I^\bullet\left(R_1\right)\right) 
  \ge 
1-\prod_{v\in Q}\left(1-\pp_p\left(v\in A\right)\right)
\ge 1-e^{-p h^2}.
\end{align*}
\end{proof}
Lemma \ref{lemaesencial} tells us the cost of growing one step along
the (easiest) $e_3$-direction, and we are also interested in computing the cost of 
growing along the $e_1$ and $e_2$ (harder) directions. To do so, we will consider general values of $r$: let us first consider the regime
$r\le a+b,$
this implies that given any rectangle $R$, all three induced 2-dimensional processes in the faces of $R$, namely,
$\mathcal N_{r-c}^{a,b}$, $\mathcal N_{r-b}^{a,c}$ and $\mathcal N_{r-a}^{b,c}$, are supercritical.
\begin{lemma}[Supercritical faces]\label{regimer<a+b}
If $r\le a+b$, and $p$ is small enough, then
\[\p_p(I(h+1,w)|I(h,w)) \ge
	\left(1-e^{-{c\choose 2}^{-1}p^{{c\choose 2}}wh}\right)^{2},\]
under $\mathcal N_{r}^{a,b,c}$-bootstrap percolation.
\end{lemma}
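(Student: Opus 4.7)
The plan is to decompose the newly added region $[h+1]^2\times[w]\setminus [h]^2\times[w]$ into the disjoint slabs $S_1=\{h+1\}\times[h]\times[w]$ and $S_2=[h+1]\times\{h+1\}\times[w]$, and to fill them sequentially: first $S_1$ using only $A\cap S_1$ together with the bulk $R_1:=[h]^2\times[w]$ (fully infected on the event $I(h,w)$), and then $S_2$ using $A\cap S_2$ together with the now-infected $R_1\cup S_1$. Let $E_i$ denote the event that step~$i$ succeeds. Then $E_1\cap E_2$ forces $I(h+1,w)$, while each $E_i$ depends only on $A\cap S_i$; since $S_1,S_2$ are disjoint from $R_1$, the events $E_1$ and $E_2$ are conditionally independent given $I(h,w)$. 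Hence
\[
\p_p\bigl(I(h+1,w)\mid I(h,w)\bigr)\ge \p_p(E_1)\,\p_p(E_2),
\]
which already produces the square in the statement, and it is enough to show that each $\p_p(E_i)\ge 1-\exp\!\bigl(-\binom{c}{2}^{-1}p^{\binom{c}{2}}wh\bigr)$.

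Every vertex of $S_1$ receives $a$ infected neighbours from $R_1$ in the $-e_1$-direction, so the induced process on $S_1$ is precisely two-dimensional $\mathcal N_{r-a}^{b,c}$-bootstrap percolation on $[h]\times[w]$. Since $r\le a+b$, the effective threshold $r-a$ is at most $b\le c$, making this 2D family supercritical (its stable set is empty); in particular, any internally filled $c\times c$ block offers $c\ge r-a$ infected neighbours in an axial direction to each of its boundary neighbours, and therefore grows freely until it covers the whole slab. The same reduction turns $S_2$ into a supercritical $\mathcal N_{r-b}^{a,c}$-process on $[h+1]\times[w]$, symmetric to the first. It therefore suffices to show that some $c\times c$ sub-rectangle of $[h]\times[w]$ is $\mathcal N_{r-a}^{b,c}$-internally filled with the required probability.

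For this I would use an explicit igniting seed of exactly $\binom{c}{2}$ sites, namely the staircase $T=\{(i,j):i,j\ge 1,\ i+j\le c\}$ inside a fixed $c\times c$ window. An induction on the anti-diagonal $i+j$ shows that when $T$ is initially infected, every vertex $(i,j)\in[c]^2$ with $i+j\ge c+1$ already has all of its in-window $-e_2$ and $-e_3$ neighbours infected at the previous round; this gives it at least $\min(i-1,b)+(j-1)$ previously infected neighbours, which is at least $r-a$ uniformly in the regime $r-a\le b<c$ in which the lemma is applied, so the window fills in at most $c$ rounds. Once the window is filled, supercriticality propagates the infection throughout $[h]\times[w]$. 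Partitioning $[h]\times[w]$ into disjoint candidate windows and applying a standard product-of-Bernoullis estimate then yields
\[
1-\p_p(E_1)\le \exp\!\bigl(-\tbinom{c}{2}^{-1}p^{\binom{c}{2}}wh\bigr),
\]
and the same for $E_2$.

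The main obstacle is the deterministic seed-spreading check: one has to verify that $T$ really ignites the whole $c\times c$ window under $\mathcal N_{r-a}^{b,c}$-bootstrap for every admissible $r-a\in\{1,\dots,b\}$, which is tightest near $r-a=b$ and demands some care about neighbours that fall outside the window or near the boundary of $S_1$. Everything else --- the conditional independence of $E_1$ and $E_2$, the supercritical spreading from a filled block, and the Bernoulli tail bound --- is standard.
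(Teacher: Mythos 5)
Your proof is correct and takes essentially the same approach as the paper's. The paper seeds the front face with a minimal staircase triangle $\Delta_a$ of size $(r-a)(r-a+1)/2\le\binom{c}{2}$ and the top face with $\Delta_b$, and multiplies by independence between the two faces; you fill the two slabs $S_1$ and $S_2$ sequentially, seeding each with the larger fixed staircase $T$ of size exactly $\binom{c}{2}$. Since everything downstream only uses the exponent $\binom{c}{2}$, the coarser seed is harmless. Your anti-diagonal induction that $T$ ignites the $c\times c$ window is precisely the deterministic step the paper asserts without detail; the inequality $\min(i-1,b)+(j-1)\ge r-a$ for $i+j\ge c+1$ holds because $r-a\le b\le c-1$ under the standing assumption $b<c$ (equivalently $c\in\{b+1,\dots,a+b-1\}$, so $a\ge 2$) in force when this lemma is used. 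One small imprecision: partitioning $[h]\times[w]$ into disjoint $c\times c$ ``windows'' gives only $wh/c^2$ trials and hence the constant $c^{-2}$ in the exponent, not $\binom{c}{2}^{-1}$ as stated. To match the lemma literally you should pack disjoint translates of the triangle $T$ itself --- two rotated copies tile a $(c-1)\times c$ block, so roughly $wh/\binom{c}{2}$ independent placements fit --- which is also what makes the paper's constant $|\Delta_s|^{-1}$ legitimate. This constant is immaterial to the rest of the argument, where it is absorbed into $\Theta(\cdot)$.
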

\begin{proof}
For $s=a,b$, let $\Delta_s$ be the discrete right-angled triangle 
whose legs are $[r-s]\times\{1\}$ and $\{1\}\times [r-s]$.
Once $R_1=[h]^2\times[w]$ is completely full, to get $R_2=[h+1]^2\times[w]$ internally filled it is enough
to have one copy of $\Delta_a$ in $A\cap(\{h+1\}\times[h]\times[w])$ (front face), and one copy of $\Delta_b$
in $A\cap([h]\times\{h+1\}\times[w])$ (top face, see Figure \ref{fig3ladosuper}).

\begin{figure}[ht]
\centering
\begin{subfigure}{.5\textwidth}
  \centering
  \includegraphics[width=1.05\linewidth]{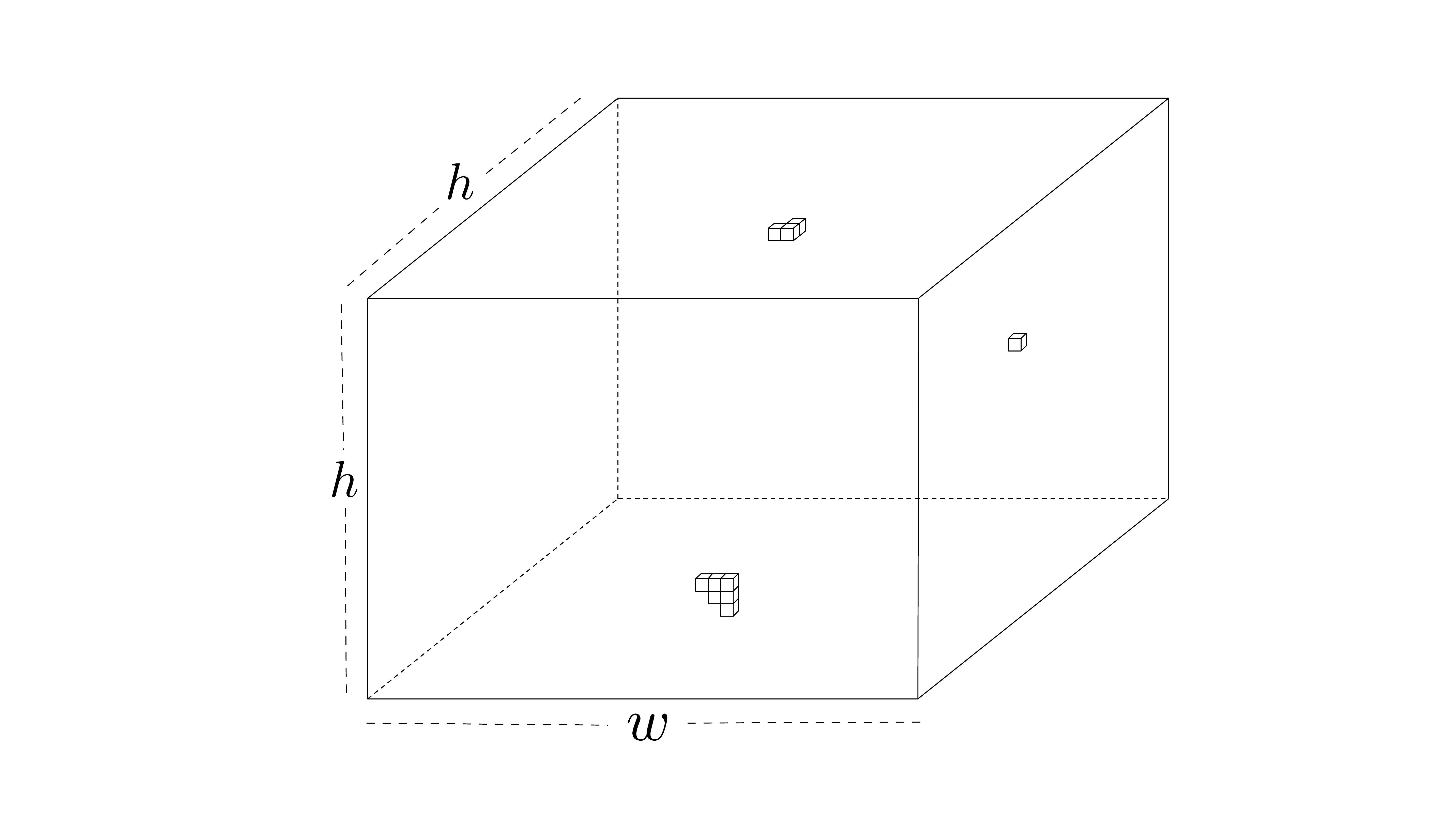}
  \caption{$r\le a+b$}
  \label{fig3ladosuper}
\end{subfigure}%
\begin{subfigure}{.5\textwidth}
  \centering
  \includegraphics[width=1.05\linewidth]{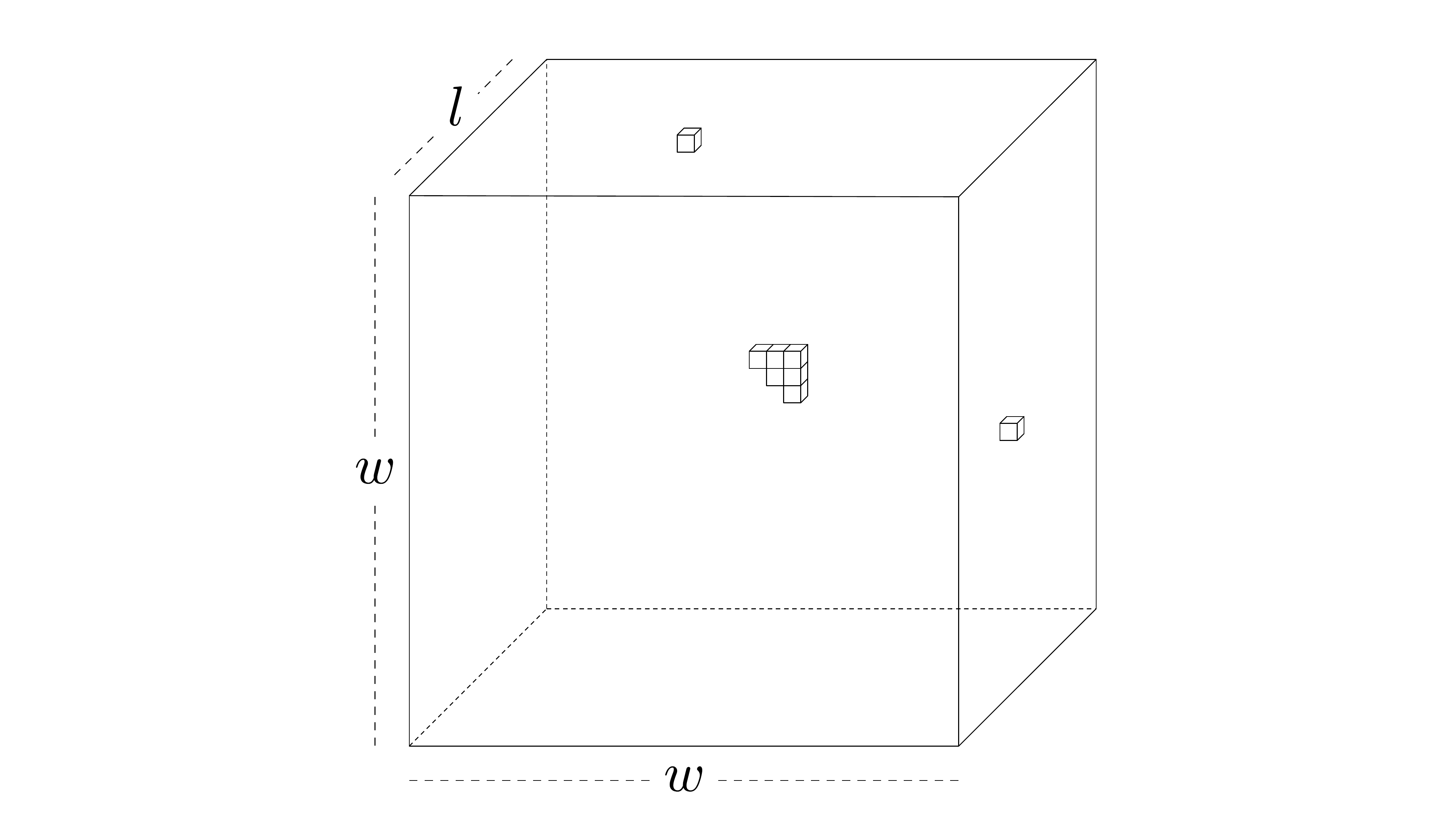}
  \caption{$c=b>a$}
 \label{fig3ladosuperc=b}
\end{subfigure}
\caption{A single vertex on the right-most side, one copy of $\Delta_a$ on the front side, and one copy of $\Delta_b$ on the top side.}
\end{figure}

Since $|\Delta_s|=(r-s)(r-s+1)/2$ and $a\ge 2$, then
$|\Delta_b|\le |\Delta_a|\le {c\choose 2}$.   Hence, by independence between the front and top faces,
\begin{align*}
\pp_p(I^\bullet(R_2)|I^\bullet(R_1)) 
& \ge \left(1-e^{-|\Delta_a|^{-1}p^{|\Delta_a|}wh}\right)
\left(1-e^{-|\Delta_b|^{-1}p^{|\Delta_b|}wh}\right)
  \ge \left(1-e^{-{c\choose 2}^{-1}p^{{c\choose 2}}wh}\right)^{2}.
\end{align*}
\end{proof}

The next step is to determine the size of a rectangle (usually called {\it critical droplet}) such that, once it is internally filled, then it can grow until $[L]^3$ with high probability.
\begin{lemma}\label{secondstep}
Let $\Gamma>0$ be a large constant and set 
$h=cp^{-\frac{1}{2}}(\log \frac 1p)^{\frac 12}$, $R_1:=[h]^2\times 
[c]$ and
\[L=\exp\left(\Gamma p^{-\frac{1}{2}}(\log \tfrac 1p)^{\frac 32}\right).\] Conditionally on $I^\bullet(R_1)$,
the probability of $I^\bullet([L]^3)$ goes to $1$, as $p\to 0$.
\end{lemma}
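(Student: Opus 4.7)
We fill $[L]^3$ starting from $R_1=[h]^2\times[c]$ by growing in three successive phases, tuned around an intermediate thickness $w_\ast:=\lceil Mp^{-{c\choose 2}}\log(1/p)\rceil$, where $M=M(\Gamma,c)$ is a large constant to be chosen. In each phase the growth consists of adding pairwise disjoint ``shells'' to the current rectangle; consequently, the one-step growth events are mutually independent and independent of $A\cap R_1$, and a union bound over the $O(L)$ total steps will suffice.

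\emph{Phase 1.} Grow $[h]^2\times[c]$ to $[h]^2\times[w_\ast]$ in the $e_3$-direction by repeated application of Lemma~\ref{lemaesencial}. Since $ph^2=c^2\log(1/p)$, each step fails with probability at most $e^{-ph^2}=p^{c^2}$, giving a total failure probability bounded by $w_\ast\,p^{c^2}=O\bigl(p^{(c^2+c)/2}\log(1/p)\bigr)=o(1)$.

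\emph{Phase 2.} Grow $[h]^2\times[w_\ast]$ symmetrically in $e_1,e_2$ to $[L]^2\times[w_\ast]$ using Lemma~\ref{regimer<a+b} (applicable because $r=c+1\le a+b$). The $i$-th step fails with probability at most $2\exp\!\bigl(-{c\choose 2}^{-1}p^{{c\choose 2}}w_\ast(h+i)\bigr)$. Plugging in $w_\ast$ and $h$, the exponent is at least ${c\choose 2}^{-1}cMp^{-1/2}(\log 1/p)^{3/2}$; choosing $M\ge 2\Gamma(c-1)$ forces it to exceed $2\log L$, so each per-step failure is at most $2/L^2$. A union bound over the at most $L$ steps gives total failure at most $2/L=o(1)$.

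\emph{Phase 3.} Grow $[L]^2\times[w_\ast]$ to $[L]^3$ in the $e_3$-direction, again by Lemma~\ref{lemaesencial}. Now each step fails with probability at most $e^{-pL^2}$, which is negligible even after summing the at most $L$ steps of the phase. Combining the three estimates yields $\p_p\bigl(I^\bullet([L]^3)\,\big|\,I^\bullet(R_1)\bigr)\to 1$, as required. The only delicate point is the balance embodied in $w_\ast$: making it larger speeds up Phase 2 (whose exponent scales linearly with $w_\ast$) but burdens Phase 1 (linearly more $e_3$-steps at cost $p^{c^2}$ each); the scaling $w_\ast\asymp p^{-{c\choose 2}}\log(1/p)$ is precisely what forces the Phase 2 exponent to exceed $\log L$ while keeping $w_\ast p^{c^2}$ a positive power of $p$.
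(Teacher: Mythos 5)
Your proof is correct and takes essentially the same approach as the paper: grow a nested sequence of rectangles by alternating $e_3$-growth (Lemma~\ref{lemaesencial}) with $(e_1,e_2)$-growth (Lemma~\ref{regimer<a+b}), bounding each conditional failure probability and applying a union bound. The only difference is bookkeeping — you pick a thicker intermediate slab $w_\ast\asymp p^{-\binom{c}{2}}\log(1/p)$, which lets you jump from cross-section $[h]^2$ directly to $[L]^2$ in a single phase (three stages total), whereas the paper uses the thinner thickness $c^2 p^{-\binom{c}{2}+1/2}(\log\tfrac 1p)^{1/2}$ and inserts an extra intermediate cross-section $[h^2]^2$ (four stages).
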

\begin{proof}
	Consider the rectangles $R_2\subset R_3\subset R_4\subset R_5:= [L]^3$ containing $R_1$, defined by
	\[R_2:=[h]^2\times [c^2p^{-{c\choose 2}+\frac{1}{2}}(\log \tfrac 1p)^{\frac 12}],\
\ \ R_3:=[h^2]^2\times [c^2p^{-{c\choose 2}+\frac{1}{2}}(\log \tfrac 1p)^{\frac 12}],\
\ \ R_4:=[h^2]^2\times [L].\]
	Note that
$\p_p(I^\bullet([L]^3)|I^\bullet(R_1))\ge\prod_{k=1}^{4} \p_p(I^\bullet(R_{k+1})|I^\bullet(R_k)).$
We apply Lemma \ref{lemaesencial} to deduce
	\begin{align*}
	\p_p(I^\bullet(R_{2})|I^\bullet(R_1)) & \ge \left(1-e^{-ph^2}\right)^{c^2p^{-{c\choose 2}+\frac{1}{2}}(\log \frac 1p)^{\frac 12}} 
 \ge e^{-2p^{\frac{c^{2}}{2}-{c\choose 2}}}\to 1,
	\end{align*}
	and by Lemma \ref{regimer<a+b},
\begin{align*}
\p_p(I^\bullet(R_{3})|I^\bullet(R_2)) & \ge 
\bigg(1-e^{-\Omega\left(p^{{c\choose 2}} p^{-{c\choose 2}+\frac{1}{2}}(\log \frac 1p)^{\frac 12}\cdot h\right)}
\bigg)^{2h^2}
 \ge \exp\left(-4h^2p^{2c}\right)\to 1,
\end{align*}
We apply these lemmas again to get $\p_p(I^\bullet(R_{4})|I^\bullet(R_3)) \to 1,$
since $ph^4\ge p^{-1}\gg \Gamma p^{-\frac{1}{2}}(\log \frac 1p)^{\frac 32}$, and also
$\p_p(I^\bullet(R_{5})|I^\bullet(R_4))\to 1$.
We conclude that $\p_p(I^\bullet([L]^3)|I^\bullet(R_1))\to 1$, as $p\to 0$.
\end{proof}
Now, we are ready to show the upper bound.
\begin{proof}[Proof of Proposition \ref{upper2}]
	Set $L=\exp\left(\Gamma p^{-\frac{1}{2}}(\log \frac 1p)^{\frac 32}\right)$, where $\Gamma>0$ is a large constant to be chosen.
	Consider the rectangle
	\[R:=\left[cp^{-\frac{1}{2}}(\log \tfrac 1p)^{\frac 12}\right]^2\times [c]\subset[L]^3,\]
 and the events  $F_L:=\{\exists$ an internally filled copy of $R$ in $[L]^3\}$, 
and $G_L:=\{\langle A\cup R\rangle=[L]^3\}$.
It follows that $\p_p\left(I^\bullet([L]^3)\right)\ge \p_p(F_L)\p_p(G_L|I^\bullet(R)),$
and $\p_p(G_L|I^\bullet(R))\to 1$, as $p\to 0$,
by the previous lemma. 
Therefore, it remains to show that $\p_p(F_L)\to 1$ too. 
 
Indeed, we claim that there exists a constant $C'>0$ such that
	\begin{equation}\label{spcd'}
	\p_p(I^\bullet(R))\ge \exp\left(-C' p^{-\frac{1}{2}}(\log \tfrac 1p)^{\frac 32}\right),
	\end{equation}
	so using the fact that there are roughly $L^3/|R|$ disjoint (therefore independent) copies of $R$ 
	(which we label $Q_1,\dots, Q_{L^3/|R|}$),
	and $|R| 
	\le p^{-3}$, (\ref{spcd'}) immediately gives

\begin{align*}
	\p_p(F_L^c)& \le \p_p\left(\bigcap_i   
	I^\bullet(Q_i)^c\right) 
 \le \left[1-\p_p(I^\bullet(R))\right]^{L^3/|R|} 
 \le \exp\left(-e^{3\log L-3\log(1/p)-C'p^{-\frac{1}{2}}(\log \tfrac 1p)^{\frac 32}}\right).
	\end{align*}
Since $\log L= \Gamma p^{-\frac{1}{2}}(\log \tfrac 1p)^{\frac 32}$, by taking $\Gamma>C'/3$ we conclude $\p_p(F_L)\to 1$, as $p\to 0$.
To finish, it is only left to prove inequality (\ref{spcd'}).

In fact, note that a way to make $R$ be internally filled is the following: start with $[c]^3\subset A$, and then grow from $R_k=[k]^2\times[c]$ to $R_{k+1}$, for
$k=c,\dots, m:=cp^{-\frac{1}{2}}(\log \tfrac 1p)^{\frac 12}.$
This gives us
\begin{align*}
  \p_p\left(I^\bullet(R)\right) & \ge
  \p_p([c]^3\subset A)\prod_{k=c}^m\p_p\left(I^\bullet(R_{k+1})|I^\bullet(R_k)\right)
 \ge p^{c^3}\prod_{k=c}^m
  \left(1-e^{-{c\choose 2}^{-1}p^{{c\choose 2}}ck}\right)^{2} \\
&\ge p^{c^3+c^2m}
 \ge \exp\left(-C' p^{-\frac{1}{2}}(\log \tfrac 1p)^{\frac 32}\right),
\end{align*}
for $C'>c^3$, as we claimed.
\end{proof}

\subsection{Case $c=a+b$}\label{sketch1}
In this section we consider the families $\mathcal N_{a+b+1}^{a,b,a+b},$
corresponding to the case  $r=a+b+1$.
To do so, we first compute the cost of growing for all cases $a+b<r\le a+c$,
where, the induced $\mathcal N_{r-c}^{a,b}$ process is still supercritical, but the induced processes $\mathcal N_{r-b}^{a,c}$ and $\mathcal N_{r-a}^{b,c}$ are critical.
\begin{lemma}[Critical faces]\label{regimer>a+b}
If $r\in\{a+b+1,\dots, a+c\}$ and $p$ is small, then
\[\p_p(I(h+1,w)|I(h,w))\ge
	\left(1-e^{-\frac{1}{r-a}p^{r-a}w}\right)^{r}
	\left(1-e^{-\frac{1}{\aab}p^{\aab}w}\right)^{2h},\]
under $\mathcal N_r^{a,b,c}$-bootstrap percolation, with $\aab:=r-(a+b).$
\end{lemma}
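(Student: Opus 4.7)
The plan is to mimic the architecture of Lemma \ref{regimer<a+b}: condition on $I(h,w)$ — i.e.\ $R_1:=[h]^2\times[w]$ being internally filled — and exhibit an event depending only on $A\cap(R_2\setminus R_1)$, with $R_2:=[h+1]^2\times[w]$, that forces $I(h+1,w)$. The incremental region splits as the front face $F:=\{h+1\}\times[h]\times[w]$, the top face $T:=[h]\times\{h+1\}\times[w]$, and the corner column $E:=\{h+1\}\times\{h+1\}\times[w]$. Conditioning on $I(h,w)$, each vertex of $F$ (resp.\ $T$) already sees $a$ (resp.\ $b$) infected $-e_1$- (resp.\ $-e_2$-) neighbours in $R_1$, so the induced two-dimensional processes on $F$ and $T$ carry thresholds $r-a$ and $r-b$. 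In our regime $r\in\{a+b+1,\dots,a+c\}$ these satisfy $b<r-a\le c$ and $a<r-b\le c$, so the ``one-vertex-propagates-a-column'' trick from Lemma \ref{regimer<a+b} no longer works — but a column containing $r-a$ (resp.\ $r-b$) consecutive initially infected sites along $e_3$ still cascades to fill its entire column on $F$ (resp.\ $T$), since the $e_3$-radius of the neighbourhood is $c$.

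The key bookkeeping point I would exploit is that once $b$ adjacent columns of $F$ are full, each further column of $F$ needs only a run of length $\aab=r-a-b$ along $e_3$ to join the cascade (its vertices then see $a$ free neighbours from $R_1$ plus $b$ more from the adjacent filled columns), and the analogous statement holds on $T$ after $a$ adjacent columns are full; the corner column $E$ likewise only requires a length-$\aab$ run once the nearest boundary columns of $F$ and $T$ are filled. I would therefore split the sufficient event into two independent pieces. First, designate at most $r$ \emph{anchor} columns close to $E$ (for instance, $b$ columns of $F$ adjacent to $E$, $a$ columns of $T$ adjacent to $E$, and a few extra slack columns to absorb end-of-cascade effects, all bounded by $r$) and demand that each anchor column contains a run of $r-a$ consecutive sites of $A$ along $e_3$; partitioning a length-$w$ column into disjoint length-$(r-a)$ blocks yields probability at least $1-e^{-p^{r-a}w/(r-a)}$ per anchor, and independence across the $r$ disjoint columns produces the first factor. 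Second, for each of the remaining at most $2h$ columns of $F\cup T$, demand a run of length $\aab$ along $e_3$; the same block argument and independence supply the second factor. Using length $r-a$ uniformly across all anchors (rather than the sharper $r-b$ on $T$) is wasteful but valid since $r-a\ge r-b$, and it is what produces the clean formula.

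To verify that the combined event actually forces $I(h+1,w)$, I would cascade layer-by-layer along $e_3$: the anchor runs propagate along their own columns, fill the corner region of $F\cup T\cup E$ first, and in particular guarantee $a+b$ free infected neighbours at every vertex of the remaining columns of $F\cup T$; each such column is then completed by its own length-$\aab$ seed. The main obstacle I anticipate is scheduling this cascade so that the ``enough adjacent columns are already full'' hypothesis is satisfied whenever a propagation seed is invoked; I would handle it by closing the corner inward from $E$ first and then expanding outward along $\pm e_2$ on $F$ and $\pm e_1$ on $T$, absorbing any small inefficiencies into the slack built into the anchor count $r$ (chosen generously larger than the sharp minimum of about $a+b$).
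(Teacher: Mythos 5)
Your proposal is correct and takes essentially the same route as the paper: column-by-column cascading along $e_3$, with a handful of anchor columns requiring runs of length $r-a$ and the rest requiring runs of length $\aab$, bounded via disjoint blocks. The paper runs the cascade in the opposite direction (starting from the boundary of $F$ and $T$ farthest from the corner column $E$, with anchor run lengths decreasing from $r-a$ to $\aab+1$ rather than held uniformly at $r-a$), and it tacitly absorbs $E$ into the overcounted non-anchor factor $(1-e^{-p^{\aab}w/\aab})^{2h}$ rather than listing it explicitly as you do; these differences are cosmetic and both yield the stated bound.
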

\begin{proof}
Once $[h]^2\times[w]$ is completely full, to fill $[h+1]^2\times[w]$
		it is enough to have the occurrence of the events $F_h^{e_1}$ and $F_h^{e_2}$ defined as follows: $F_h^{e_1}$ as 
		(growing along the $e_1$-direction) there exist $r-a$ adjacent vertices
		in $A\cap(\{h+1\}\times\{1\}\times[w])$, $r-(a+1)$ adjacent vertices in 
		$A\cap(\{h+1\}\times\{2\}\times[w])$,$\dots$, $r-(a+b-1)$ adjacent vertices in
		$A\cap(\{h+1\}\times\{b\}\times[w])$, and for each $i=b+1,\dots,h$, there exist
		$\aab=r-(a+b)$ adjacent vertices in $A\cap(\{h+1\}\times\{i\}\times[w])$.
\vspace{-.3cm}
\begin{figure}[ht]\label{sec21}
	\centering
	\includegraphics[width=.5\textwidth]{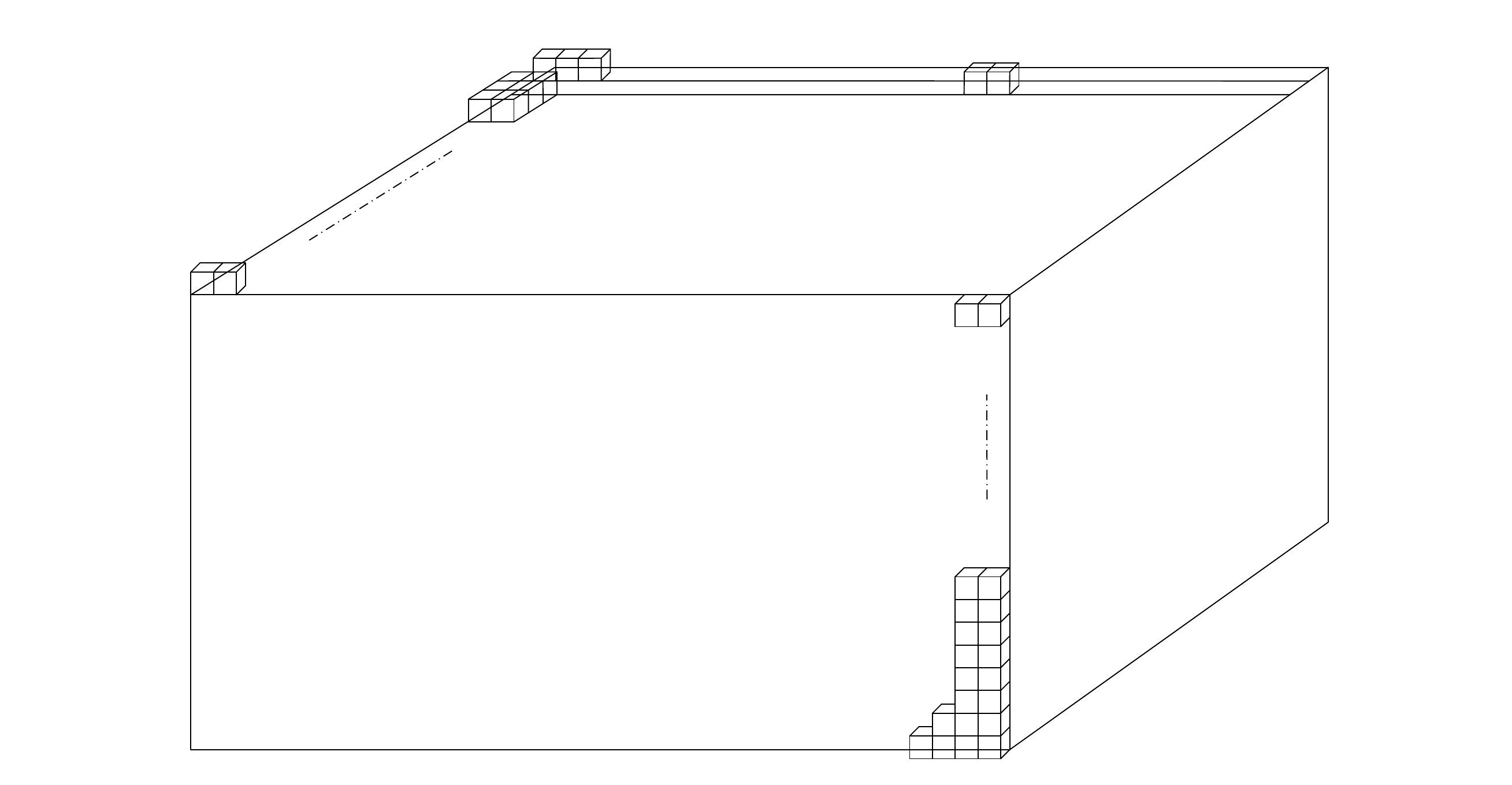}
	\caption{$\aab$ vertices in each of the lines along the $e_3$-direction ($\aab=2$).}
	\label{figcmayora+b}
\end{figure}
\begin{align*}
\pp_p(F_h^{e_1}) &  \ge \prod_{k=\aab+1}^{r-a}\left(1-(1-p^{k})^{\frac wk}\right)
\prod_{i=1}^{h}\left(1-(1-p^{\aab})^{\frac wm}\right)  
 \ge \left(1-e^{-\frac{1}{r-a}p^{r-a}w}\right)^{b}
		\left(1-e^{-\frac{1}{\aab}p^{\aab}w}\right)^{h}.  
\end{align*}
$F_h'$ is defined analogously, this time growing along the $e_2$-direction (see Figure \ref{figcmayora+b}), thus
\begin{align*}
\pp_p(F_h^{e_2}) \ge \left(1-e^{-\frac{1}{r-b}p^{r-b}w}\right)^{a}\left(1-e^{-\frac{1}{\aab}p^{\aab}w}\right)^{h}. 
\end{align*}
Finally, 
$p_r(h+,w)\ge \pp_p(F_h^{e_1})\pp_p(F_h^{e_2})$.
\end{proof}
\begin{rema}
In the regime $a+c<r\le a+b+c$,
all three the induced 2-dimensional processes $\mathcal N_{r-c}^{a,b}$, $\mathcal N_{r-b}^{a,c}$ and $\mathcal N_{r-a}^{b,c}$ are critical.
\end{rema}

As before, we need to set the size of a critical droplet.
\begin{lemma}
Fix $\varepsilon>0$ and let $\Gamma$ be a large constant. Set  $h=p^{-1/2-\varepsilon}$, $R_1:=[h]^2\times [h^{2}]$ and \[L=\exp(\Gamma p^{-1}).\] 
Conditionally on $I^\bullet(R_1)$,
the probability of $I^\bullet([L]^3)$ goes to $1$, as $p\to 0$.
\end{lemma}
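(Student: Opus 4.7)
The plan is to mimic the strategy of Lemma~\ref{secondstep}: I would build an increasing chain $R_1 \subset R_2 \subset R_3 \subset R_4 \subset R_5 := [L]^3$ and estimate
\[
\p_p\bigl(I^\bullet([L]^3) \,\bigm|\, I^\bullet(R_1)\bigr) \;\ge\; \prod_{k=1}^{4} \p_p\bigl(I^\bullet(R_{k+1}) \,\bigm|\, I^\bullet(R_k)\bigr).
\]
For the $e_3$-transitions I would use Lemma~\ref{lemaesencial}, and for the $e_1$- and $e_2$-transitions Lemma~\ref{regimer>a+b}, which applies since $r=a+b+1$ lies in $\{a+b+1,\dots,a+c\}$. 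Note that here $\aab=r-(a+b)=1$ and $r-a=b+1$, so the per-step bound of Lemma~\ref{regimer>a+b} reads $\bigl(1-e^{-\frac{1}{b+1}p^{b+1}w}\bigr)^{r}\bigl(1-e^{-pw}\bigr)^{2h}$.

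Concretely, first I would grow $R_1$ along the easy $e_3$-direction into $R_2 := [h]^2 \times [p^{-b-2}]$: each step succeeds with probability $\ge 1-e^{-ph^2}=1-e^{-p^{-2\varepsilon}}$, and the number of steps is polynomial in $1/p$, so this transition tends to $1$. Next I would grow $R_2$ along $e_1$ and $e_2$ up to $R_3 := [p^{-2}]^2 \times [p^{-b-2}]$; with $w=p^{-b-2}$ the two exponents in Lemma~\ref{regimer>a+b} become $-p^{-1}/(b+1)$ and $-p^{-b-1}$, both vastly larger than $\log p^{-2}$, so the product over the $O(p^{-2})$ steps is close to $1$. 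Then I would return to $e_3$-growth to produce $R_4 := [p^{-2}]^2 \times [L]$: each step costs $e^{-p\cdot p^{-4}}=e^{-p^{-3}}$, and since $\log L = \Gamma p^{-1} \ll p^{-3}$ the product over the $L$ steps survives. Finally I would close out by using Lemma~\ref{regimer>a+b} with $w=L$; because $p^{b+1}L$ is already exponentially large in $p^{-1}$, both factors in the bound are essentially~$1$ even after $O(L^2)$ steps.

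The main obstacle is getting the intermediate scales to balance. Unlike the case $c\in\{b+1,\dots,a+b-1\}$, where all three induced face processes were supercritical and a polynomial-in-$1/p$ chain sufficed, here the two face processes orthogonal to $e_1$ and $e_2$ are \emph{critical}: the factor $(1-e^{-pw})^{2h}$ in Lemma~\ref{regimer>a+b} forces the slab thickness to satisfy $pw \gg \log h$, which in turn forces the critical droplet to have horizontal side $h_3 \sim p^{-2}$ in order that later $e_3$-growth can cover a region of length $L=e^{\Gamma/p}$. This quadratic thickness in the hard directions is exactly what promotes the $p^{-1/2}$ of Proposition~\ref{upper2} to the $p^{-1}$ appearing in $\log L$ here, and any looser choice of $w_2$ or $h_3$ would break one of the four factors above.
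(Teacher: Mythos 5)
Your four-step chain is correct and is essentially the argument the paper has in mind (the paper itself omits the proof, saying only that it mirrors Lemma~\ref{secondstep}). The choices $R_2=[h]^2\times[p^{-b-2}]$, $R_3=[p^{-2}]^2\times[p^{-b-2}]$, $R_4=[p^{-2}]^2\times[L]$ all make the per-step failure probabilities super-polynomially small relative to the number of steps, and Lemma~\ref{regimer>a+b} does apply since $r=a+b+1\in\{a+b+1,\dots,a+c\}$. One small caveat on your closing remark: the constraint coming from the $e_3$-growth to $R_4$ is $ph_3^2\gg\log L=\Gamma p^{-1}$, i.e.\ $h_3\gg p^{-1}$ (up to a constant depending on $\Gamma$); your choice $h_3=p^{-2}$ is comfortably sufficient but is not forced, so the claim that any looser $h_3$ would break a factor overstates the tightness. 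This does not affect the validity of the proof.
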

The proof of this lemma is very similar to that of Lemma \ref{secondstep}, thus, we omit it.
Finally, to deduce the upper bound, we proceed in the same way that we used to prove Proposition \ref{upper2},
this time by showing that the critical droplet $R_1$ satisfies
\begin{equation}\label{spcd}
	\p_p(I^\bullet(R_1))\ge \exp(-C'p^{-1}),
\end{equation}
for some constant $C'>0$, depending on the integral of the function 
$f_1:(0,\infty)\to(0,\infty)$, defined by $f_1(z)=-\log(1-e^{-z})$ (see e.g. \cite{AL88} and \cite{H03}).


\subsection{Case $c=b>a$}\label{sketch2}
In this section we sketch the proof of the last case. 
Consider the families \[\mathcal N_{c+1}^{a,c,c}.\]

We follow the same steps, taking into account that the way to grow is slightly different:
in this case, to grow along the $e_2$-direction is as easy as 
grow along the $e_3$-direction, so that 
it is enough to find a single infected vertex on the right-most and top sides,
while to grow along the $e_1$-direction we still need to find one copy of $\Delta_a$ on the front
side (see Figure \ref{fig3ladosuperc=b} above).

\begin{lemma}\label{grow2}
	Fix $l,w\ge c$ and let $I=I^\bullet([l]\times[w]^2)$. If $p$ is small enough, then 
	\begin{enumerate}
	\item[(i)] $\p_p\left(I^\bullet([l]\times[w+1]^2)|I\right)\ge \left(1-e^{-plw}\right)^2$.
	\item[(ii)]  $\p_p\left(I^\bullet([l+1]\times[w]^2)|I\right)\ge
	1-e^{-\Omega(p^{c(c+1)/2}w^2)}$. 
	\end{enumerate}
\end{lemma}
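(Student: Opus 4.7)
The plan is to treat each part by exhibiting an explicit mechanism of growth, face by face, in the spirit of Lemmas~\ref{lemaesencial} and~\ref{regimer<a+b}. The key structural observation is that in the $\mathcal N_{c+1}^{a,c,c}$-rule the neighbourhoods in the $e_2$- and $e_3$-directions both have range $c=b$, making those two directions equally cheap, whereas growth in the $e_1$-direction (range $a<c$) remains harder.

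For part (i), I would first note that $[l]\times[w+1]^2$ is obtained from $[l]\times[w]^2$ by adjoining the two new faces
\[F_2:=[l]\times\{w+1\}\times[w],\qquad F_3:=[l]\times[w]\times\{w+1\},\]
together with the edge $E:=[l]\times\{w+1\}\times\{w+1\}$. Conditional on $I$, every site of $F_3$ already sees $c$ infected $-e_3$-neighbours inside the slab, so only one extra infected neighbour is needed to infect it. A single $v_0\in A\cap F_3$ then triggers $F_3$ entirely: since both $\pm e_1$- and $\pm e_2$-steps lie in $N_{a,c,c}$, a straightforward induction spreads the infection along the row and column of $v_0$ and then to the whole face. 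The symmetric argument fills $F_2$, and once $F_2\cup F_3$ is infected the edge $E$ is filled for free because each site of $E$ then has $c$ infected $-e_2$-neighbours in $F_3$ and $c$ infected $-e_3$-neighbours in $F_2$. As $F_2$ and $F_3$ are disjoint from each other and from the slab, the events $\{A\cap F_2\ne\emptyset\}$ and $\{A\cap F_3\ne\emptyset\}$ are mutually independent and independent of $I$, each of probability at least $1-(1-p)^{lw}\ge 1-e^{-plw}$; multiplying yields (i).

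For part (ii), my plan is to focus on the new face $F_1:=\{l+1\}\times[w]^2$, on which each site only sees $a$ infected $-e_1$-neighbours from the slab, so the induced two-dimensional process on $F_1$ is $\mathcal N_{c+1-a}^{c,c}$, which is supercritical since $c+1-a\le c$. Arguing exactly as in Lemma~\ref{regimer<a+b}, a single copy of the discrete right-angled triangle $\Delta_a$ (with legs of length $c+1-a$, hence $|\Delta_a|=(c+1-a)(c+2-a)/2\le c(c+1)/2$) placed anywhere inside $A\cap F_1$ is enough to infect the whole of $F_1$. I would then tile $F_1$ into roughly $w^2/(c+2-a)^2$ disjoint $(c+2-a)\times(c+2-a)$ squares, each of which contains a translated copy of $\Delta_a$ in $A$ with probability at least $p^{|\Delta_a|}$, independently across squares and independently of $I$; the probability that no square does is then at most
\[\bigl(1-p^{|\Delta_a|}\bigr)^{\Omega(w^2)}\le \exp\!\bigl(-\Omega(p^{c(c+1)/2}w^2)\bigr),\]
which gives (ii).

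The only genuinely non-routine step, and the one I would write out with care, is the propagation claim underlying part (i): verifying that a single $v_0\in A\cap F_3$ together with the filled slab really does infect the whole of $F_3$ under the $\mathcal N_{c+1}^{a,c,c}$-rule, despite the $e_1$-range being only $a$. Once one observes that a single-step move in either the $e_1$- or the $e_2$-direction suffices, and that each newly infected site still receives $c$ free neighbours from the slab, the induction is immediate; everything else amounts to reusing standard counting estimates already employed in the earlier lemmas.
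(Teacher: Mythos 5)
Your proposal is correct and follows exactly the route the paper has in mind: the paper's own proof is the one-line remark ``Similar to the proof of Lemmas~\ref{lemaesencial} and~\ref{regimer<a+b},'' backed by the sentence just before the lemma explaining that a single infected vertex on each of the right and top faces suffices, while the front face still needs a copy of $\Delta_a$. Your write-up supplies the details the paper omits --- including the one non-trivial verification, that a single site of $A$ on $F_2$ or $F_3$ propagates across the whole face because every site there already gets $c$ free neighbours from the full slab --- and the counting bounds for (ii) (tiling $F_1$ and using $|\Delta_a|=(c+1-a)(c+2-a)/2\le c(c+1)/2$, with equality when $a=1$) match the intended argument.
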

\begin{proof}
Similar to the proof of Lemmas \ref{lemaesencial} and \ref{regimer<a+b}.
\end{proof}
The size of the critical droplet is given by the following lemma, again, we omit the proof.
\begin{lemma}
Let $\Gamma$ be a large constant. Set  
 $R_1:=[p^{-1/2} (\log\frac{1}{p})^{-\frac 12}]\times \left[2\Gamma p^{-1}\log\tfrac 1p\right]^2$ and
 \[L=\exp\left(\Gamma p^{-1/2}\sqrt{\log\tfrac{1}{p}}\right).\]
Conditionally on $I^\bullet(R_1)$,
the probability of $I^\bullet([L]^3)$ goes to $1$, as $p\to 0$.
\end{lemma}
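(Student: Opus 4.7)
The plan is to grow the critical droplet $R_1 = [h] \times [w]^2$ (with $h = p^{-1/2}(\log 1/p)^{-1/2}$ and $w = 2\Gamma p^{-1}\log(1/p)$) out to the full cube $[L]^3$ in two stages, mirroring the strategy used in Lemma \ref{secondstep}. In Stage~1 I would iterate Lemma \ref{grow2}(i) along the ``easy'' directions $e_2,e_3$, expanding $[h]\times[w]^2$ to $[h]\times[L]^2$ in lockstep. In Stage~2 I would iterate Lemma \ref{grow2}(ii) along the ``hard'' direction $e_1$, expanding $[h]\times[L]^2$ to $[L]\times[L]^2=[L]^3$. Because the two stages consume disjoint sub-slabs of $A$, the corresponding success events are independent conditionally on $I^\bullet(R_1)$, and the desired conditional probability is bounded below by the product of the per-stage success probabilities.

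For Stage~1, I apply Lemma \ref{grow2}(i) $L-w$ times. At every intermediate step the cross-section has side length at least $w$, so each step fails with probability at most $2e^{-phw}$. The crucial calibration is
\[
phw \;=\; p\cdot p^{-1/2}(\log 1/p)^{-1/2}\cdot 2\Gamma p^{-1}\log(1/p) \;=\; 2\Gamma p^{-1/2}\sqrt{\log(1/p)} \;=\; 2\log L,
\]
so each step fails with probability at most $2L^{-2}$, and a union bound yields total Stage~1 failure at most $2(L-w)L^{-2} \le 2/L \to 0$.

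For Stage~2, I apply Lemma \ref{grow2}(ii) at most $L$ times, with the cross-section fixed at side length $L$. The per-step failure probability is $\exp(-\Omega(p^{c(c+1)/2}L^2))$, and since $L^2 = \exp(2\log L)$ grows faster than any polynomial in $1/p$, the exponent $p^{c(c+1)/2}L^2$ dominates $\log L$ by a huge margin. A union bound over $L$ steps yields total Stage~2 failure at most $L\exp(-\Omega(p^{c(c+1)/2}L^2)) \to 0$. Combining,
\[
\p_p\bigl(I^\bullet([L]^3) \bigm| I^\bullet(R_1)\bigr) \;\ge\; \bigl(1-\tfrac{2}{L}\bigr)\bigl(1-L\exp(-\Omega(p^{c(c+1)/2}L^2))\bigr) \;\longrightarrow\; 1.
\]

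The only delicate point is the balance in Stage~1: the parameter $w \sim p^{-1}\log(1/p)$ is chosen precisely so that $phw \sim \log L$, giving Stage~1 just enough margin to survive a union bound over $L$ steps with no room to spare. I do not anticipate any serious obstacle beyond this identity, since Lemma \ref{grow2} has already absorbed the combinatorial content of face-by-face propagation, and Stage~2 is effortless once the cross-section has reached the enormous side length $L$.
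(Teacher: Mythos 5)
Your two-stage argument is correct, and the key calibration $phw_0 = 2\log L$ is exactly right: the constant $2$ in the $e_2,e_3$ side length $w_0 = 2\Gamma p^{-1}\log(1/p)$ is what gives per-step failure $\le 2L^{-2}$, leaving enough room for the union bound over $L$ steps in Stage~1; and in Stage~2 the cross-section $L^2$ is already super-polynomial in $1/p$, so $p^{c(c+1)/2}L^2 \gg \log L$ and the union bound is trivial. The paper omits this proof and points to Lemma~\ref{secondstep} as the template, but that lemma needs a four-rectangle chain (alternating $e_3$ growth and $e_1,e_2$ growth) because its critical droplet $[h]^2\times[c]$ has only constant extent in the easy direction. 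Here $R_1$ is already long in both easy directions ($w_0 \sim p^{-1}\log(1/p)$), which lets you collapse the scheme to two stages: grow $e_2,e_3$ all the way to $L$ first, then grow $e_1$. This is a genuine simplification over a literal imitation of Lemma~\ref{secondstep}, exploiting the lopsided shape of the critical droplet in the $c=b>a$ regime, and it is a perfectly valid route to the same conclusion.

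One small point of phrasing: rather than claiming the two stages are ``independent conditionally on $I^\bullet(R_1)$,'' the cleaner statement is the standard one that $\p_p(I^\bullet(R_{k+1})\mid I^\bullet(R_k))$ is bounded by the probability of finding fresh infected sites in the annulus $R_{k+1}\setminus R_k$, which is disjoint from (and hence independent of) the randomness determining $I^\bullet(R_k)$; chaining these conditional bounds gives the product lower bound without invoking any independence of events. This is how Lemma~\ref{secondstep} is actually run, and your computation is exactly of this form, so the conclusion is unaffected.
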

Finally, to deduce the upper bound, we proceed as before, this time by showing that 
\begin{equation}\label{spcd}
	\p_p(I^\bullet(R_1))\ge \exp\left(-C'p^{-\frac{1}{2}}\sqrt{\log \tfrac{1}{p}}\right),
\end{equation}
for some constant $C'>0$, depending on $c$ and the function $f_2(z)=-\log(1-e^{-z^2})$.

\section{Lower bounds via components process}\label{SectionLowerComp}  
In this section we only prove the lower bounds corresponding to the cases $c< a+b$, since the proof is an application of the {\it components process} (see Definition \ref{compro} below),
a variant of an algorithm introduced Bollob\'as, Duminil-Copin, Morris, and Smith \cite{BDMS15}. 
The lower bound in the case $a=b=c=1$ was proved in \cite{AL88}, and the general case $a=b=c$ follows by using the same arguments.
Thus, we will omit this case, and prove the following.
\begin{prop}\label{lower1weaker}
If $c>a$, there is a constant $\gamma=\gamma(c)>0$
such that, for 
\[L<\exp\left(\gamma p^{-1/2}(\log\tfrac{1}{p})^{1/2}\right),\]
 $\p_p(I^{\bullet}([L]^3))\to 0,$ as $p\to 0$,
under $\mathcal N_{c+1}^{a,c,c}$-bootstrap percolation.
\end{prop}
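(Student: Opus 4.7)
I will adapt the \emph{components process} of Bollobás, Duminil-Copin, Morris and Smith \cite{BDMS15}. The plan has three main ingredients: (a) run a components algorithm that outputs a disjoint family of internally filled rectangles covering $\langle A\rangle$; (b) deduce an Aizenman--Lebowitz-type lemma stating that any internally filled large rectangle contains internally filled sub-rectangles of every intermediate diameter; and (c) establish a sharp upper bound of the form $\exp(-\Omega(p^{-1/2}\sqrt{\log(1/p)}))$ on the probability that a rectangle of a carefully chosen \emph{critical scale} is internally filled.

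Concretely, initialise the collection by letting each site of $A \cap [L]^3$ be a $1 \times 1 \times 1$ rectangle. At each step, pick any two rectangles currently in the collection whose $\ell^\infty$-distance is at most a constant $D = D(a,c)$, and replace them by their bounding rectangle; iterate until no such pair exists. Choosing $D$ larger than a fixed multiple of the diameter of the neighbourhood $N_{a,c,c}$ and invoking monotonicity of the $\mathcal N_{c+1}^{a,c,c}$-dynamics, one checks that every output rectangle is internally filled and $\langle A\rangle_{\mathcal N_{c+1}^{a,c,c}}$ is contained in the union of the output rectangles. Tracking the merge history then yields the standard Aizenman--Lebowitz property: if $I^\bullet([L]^3)$ occurs, then for every $k \le L$ there exists an internally filled sub-rectangle of $[L]^3$ of diameter in $[k, 3k]$.

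Take the critical scale $k^* = \lfloor p^{-1}\log(1/p)\rfloor$ (matching the long side of the upper-bound critical droplet in Section~\ref{sketch2}) and let $R^*$ range over rectangles of diameter in $[k^*, 3k^*]$. The core estimate is
\begin{equation*}
\p_p(I^\bullet(R^*)) \le \exp\bigl(-C_0 \, p^{-1/2}\sqrt{\log(1/p)}\bigr)
\end{equation*}
for some $C_0 = C_0(c) > 0$. To prove this, one observes that filling $R^*$ requires many successive growth steps along the hard $e_1$-direction, each of which forces a copy of $\Delta_a$ to appear in the corresponding $e_2 e_3$-cross-section, whereas growth in the easy $e_2, e_3$-directions only needs a single seed on a face of area $l \cdot w$. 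Applying the BK / Reimer inequality to a recursive scale-by-scale decomposition of $R^*$ (in the spirit of the 2D proofs in \cite{DCE13,EH07,AA12}), and optimising over the short- and long-axis dimensions at each scale, yields the claimed exponent. Granted the estimate, the union bound
\begin{equation*}
\p_p(I^\bullet([L]^3)) \le L^3 (3k^*)^3 \exp\bigl(-C_0 \, p^{-1/2}\sqrt{\log(1/p)}\bigr) \to 0
\end{equation*}
whenever $\gamma < C_0/4$ and $L < \exp(\gamma p^{-1/2}\sqrt{\log(1/p)})$ finishes the proof.

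The main obstacle is the critical-droplet estimate above. The correct $\sqrt{\log(1/p)}$ (rather than full $\log(1/p)$) factor comes from the extra $e_3$-direction, which makes growth essentially "free" on the $e_1 e_3$ and $e_2 e_3$ faces and effectively lowers the cost of planting $\Delta_a$'s inside $e_2 e_3$-cross-sections. Pinning down the scale $k^* \asymp p^{-1}\log(1/p)$ is essential: below this scale, cross-sections are too small to host the required seeds with positive probability; above it, the growth from $R^*$ up to $[L]^3$ cannot be captured by the components process. The delicate logarithmic accounting of cross-section areas down the scale hierarchy is precisely where the $\sqrt{\log(1/p)}$ emerges.
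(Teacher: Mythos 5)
Your high-level plan (components process + Aizenman--Lebowitz lemma + union bound) matches the paper's, but two of your key steps differ in ways that leave genuine gaps.

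\emph{The Aizenman--Lebowitz lemma you state is too coarse.} You propose a one-parameter version ("for every $k\le L$ there is an internally filled sub-rectangle of diameter in $[k,3k]$"), and then try to bound $\p_p(I^\bullet(R^*))$ uniformly over rectangles $R^*$ of that diameter. For the isotropic $\mathcal N_2^{1,1,1}$-model this works, but the anisotropic model $\mathcal N_{c+1}^{a,c,c}$ is very far from shape-invariant: a $[1]\times[1]\times[k]$ segment and a $[k]^3$ cube have vastly different filling probabilities, and the cost of growing along $e_1$ is of a different order than along $e_2,e_3$. The paper's Lemma~\ref{ALlema0} is a \emph{two-parameter} version tracking the hard $e_1$-dimension $x$ and the semi-perimeter $(y+z)/2$ of the easy $e_2e_3$-cross-section separately, and produces a rectangle with $(y+z)/2\le 2ck$ and either $x\ge h$ or ($x<h$ and $(y+z)/2\ge k$). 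Without this shape control you cannot prove the uniform droplet estimate you assert.

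\emph{The critical scale and the core estimate are off.} You set $k^*\approx p^{-1}\log\tfrac 1p$ (matching the long side of the upper-bound droplet), and then wave at a BK/Reimer hierarchical decomposition for the bound $\p_p(I^\bullet(R^*))\le\exp(-C_0p^{-1/2}\sqrt{\log\tfrac 1p})$. That recursion is not carried out, and it is not clear it closes at this scale. The paper avoids it entirely: with $h=\delta p^{-\frac 12}(\log\tfrac 1p)^{-\frac 12}$ and $k=p^{-\frac 12}(\log\tfrac 1p)^{\frac 12}$ (so $hk=\delta/p$), the two-parameter AL lemma reduces the estimate to two elementary computations --- (i) if $x<h$ and $z\ge k$, each of the $z/2c$ disjoint slabs $[x]\times[y]\times[2c]$ must meet $A$, giving $(O(\delta))^{k/2c}\le e^{-k}$; (ii) if $x\ge h$, each of the $x/3a$ slabs $[3a]\times[y]\times[z]$ must contain a pair of close sites of $A$, giving $(O(p^2k^2))^{h/3a}\le e^{-\Omega(\delta k)}$. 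No BK/Reimer is needed. Minor additional point: your merging criterion based on $\ell^\infty$-distance would have to be replaced by the paper's notion of strong connectedness via $\bar N_{a,b,c}$ (distances up to $a$ in $e_1$ but up to $c$ in $e_2,e_3$), and the output rectangles are guaranteed "internally spanned" (Definition~\ref{intspa}) rather than internally filled, which is the notion the union bound needs.
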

\begin{prop}\label{lower2weaker}
If $c\in\{b+1,\dots,a+b-1\}$, there exists $\gamma=\gamma(c)>0$
such that, for 
\[L<\exp\left(\gamma p^{-1/2}(\log\tfrac{1}{p})^{3/2}\right),\]
 $\p_p(I^{\bullet}([L]^3))\to 0,$ as $p\to 0$,
under $\mathcal N_{c+1}^{a,b,c}$-bootstrap percolation.
\end{prop}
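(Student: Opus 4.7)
The plan is to adapt the components process of \cite{BDMS15} to the 3D anisotropic model, combine it with an Aizenman--Lebowitz-type extraction step at an intermediate scale, and conclude via a direct probabilistic estimate on the internally filled event for a ``critical'' droplet, followed by a union bound over positions.

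First I would define the components process for $\mathcal N_{c+1}^{a,b,c}$-bootstrap percolation: initialize with the singletons of $A$, and at each step merge two components whose bounding rectangles lie within $\ell^\infty$-distance $D = D(a,b,c)$ in every coordinate direction, until no further merges are possible. A standard monotonicity argument shows that $I^\bullet([L]^3)$ forces the final stable configuration to contain a component whose bounding rectangle is all of $[L]^3$. Because each merge enlarges the bounding rectangle by at most a bounded amount, for every doubling scale $k$ between $D$ and $L$ there must have existed a component whose bounding rectangle has both its $e_1$- and $e_2$-extents in $[k, 2k]$. In particular, at the scale $k^\star := c p^{-1/2}(\log \tfrac{1}{p})^{1/2}$ suggested by Proposition \ref{upper2}, we extract an internally filled rectangle $R$ of base size $\Theta(k^\star)^2$ and prescribed thickness in the $e_3$-direction.

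The core of the proof is then the estimate
\[\p_p(I^\bullet(R)) \le \exp\!\bigl(-c_1 p^{-1/2}(\log \tfrac{1}{p})^{3/2}\bigr),\]
for a constant $c_1 = c_1(a,b,c) > 0$. I would establish it by a reverse reading of the argument of Lemma \ref{regimer<a+b}: each width-one extension of the growing droplet in one of the hard directions ($e_1$ or $e_2$) requires the presence of a copy of the triangle $\Delta_a$ or $\Delta_b$ in $A$ on the relevant face, whose probability is of order $p^{\binom{c}{2}} k w$ at width $k$ and thickness $w$. Summing $-\log$ of these per-step probabilities over $k$ from $c$ to $k^\star$ yields an exponent of order $p^{-1/2}(\log \tfrac{1}{p})^{3/2}$, as in the computation at the end of the proof of Proposition \ref{upper2}; the combinatorial (entropy) factor accounting for the tree of merges in the components process contributes only a polynomial factor in $1/p$ and is absorbed by the leading exponential.

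Finally, a union bound over the $O(L^3)$ positions of $R$ gives $\p_p(I^\bullet([L]^3)) \to 0$ whenever $\log L \le \gamma p^{-1/2}(\log \tfrac{1}{p})^{3/2}$ with $\gamma < c_1/3$. The main obstacle, and the step most deserving of care, is the droplet estimate above: one has to verify that the triangle-shaped infection pattern is genuinely \emph{necessary} and not merely sufficient, ruling out alternative 3D ``shortcut'' routes that could grow the rectangle more cheaply by exploiting the $e_3$-direction. This requires a careful analysis of the merge sequences produced by the components process and, in particular, tight control on what the induced two-dimensional processes on the three coordinate faces of $R$ can accomplish before the thickness of $R$ has built up.
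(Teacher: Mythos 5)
Your high-level plan — run a components process, extract a critical-scale rectangle via an Aizenman--Lebowitz lemma, estimate the probability of that rectangle, and union-bound over positions — matches the skeleton of the paper's proof. But your central step has a genuine gap that you yourself flag and then do not close.

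The problem is the droplet estimate. You propose to obtain $\p_p(I^\bullet(R)) \le \exp(-c_1 p^{-1/2}(\log\tfrac1p)^{3/2})$ by a ``reverse reading'' of Lemma~\ref{regimer<a+b}: each hard-direction extension of the droplet \emph{requires} a copy of $\Delta_a$ or $\Delta_b$ on a face. But Lemma~\ref{regimer<a+b} only exhibits a \emph{sufficient} way to grow a fully-infected rectangle; it says nothing about what patterns are \emph{necessary} for $R$ to be internally filled (or spanned). The closure of $A\cap R$ is in general not a growing box, and growth can proceed along the $e_3$-direction first and then feed the $e_1,e_2$-faces in ways that no greedy construction anticipates. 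You correctly identify this as ``the main obstacle, and the step most deserving of care,'' and then stop; but this is exactly the content of the proposition, not a technical remainder. The paper circumvents it entirely: it never attempts to show the triangle pattern is necessary. Instead it extracts, via Lemma~\ref{ALlema00}, an internally \emph{spanned} rectangle $Q=[x]\times[y]\times[z]$ with $(x+y)/2\le rh$ and either $z\ge k=p^{-1}$ or $(x+y)/2\ge h=\delta p^{-1/2}(\log\tfrac1p)^{1/2}$, and then uses only \emph{weak but honest necessary conditions}: if $z\ge k$, each of the $z/r$ disjoint slabs $[x]\times[y]\times[r]$ must meet $A$ (no gap of width $r$ along $e_3$); if $y\ge h$, using $b\le r-2$, each slab $[x]\times[2r]\times[z]$ must contain two points of $A$ within constant distance. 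Multiplying these trivially-verified necessary events over the disjoint slabs gives the $\exp(-\Omega(p^{-1/2}(\log\tfrac1p)^{3/2}))$ bound without ever having to rule out ``shortcut'' routes.

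Two secondary imprecisions. First, you extract a rectangle at a single scale $k^\star$ with ``prescribed thickness'' in $e_3$, but the $e_3$-extent is exactly where the dichotomy of the paper's Lemma~\ref{ALlema00} lives (small vs.\ large $z$); without that dichotomy the slab-counting estimates above don't split cleanly. Second, the natural output of a components process is an internally \emph{spanned} rectangle, not an internally \emph{filled} one; the paper therefore proves the stronger Proposition~\ref{lower2} for $I^\times$ and deduces $I^\bullet$ a fortiori. Your write-up quietly treats the two as interchangeable, which would need justification if you insisted on $I^\bullet$. None of this is fatal to the \emph{framework}, but as written your proposal has a hole exactly at the step that carries the proof.
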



\begin{nota}
Throughout this paper, when $\UU=\mathcal N_{r}^{a,b,c}$ we will omit the subscript in the closure and
simply write $\langle \cdot \rangle$ instead of $\langle \cdot \rangle_{\mathcal N_{r}^{a,b,c}}$.
\end{nota}
Aizenman and Lebowitz \cite{AL88} obtained the matching lower bound for the family $\mathcal N_2^{1,1,1}$ by using the so-called {\it rectangles process}, and they exploited the fact that for this model, the closure $\genA$ is a union of rectangles which are separated by distance at least 2.

In our case, the closure $\genA$ is more complicated. 
Thus, we need to introduce a notion about rectangles
which is an approximation to being internally filled, and
this notion requires a strong concept of connectedness; we define both concepts in the following.
Consider the superset of $N_{a,b,c}$ (see  (\ref{neigh3})) given by
 \[\bar N_{a,b,c}:= \{(u_1,u_2,u_3)\in\Zdt: |u_1|\le a, |u_2|\le b, |u_3|\le c
 \textup{ and } u_1u_2u_3=0\}.\]
\begin{defi}\label{strconn}
Let $G=(V,E)$ be the graph with vertex set $[L]^3$ and edge set given by
$E=\{(u,v): u-v\in  \bar N_{a,b,c}\}$.   
 We say that a set $S\subset [L]^3$ is {\it strongly connected}
 if it is connected in the graph $G$.
\end{defi}

\begin{defi}\label{intspa}
We say that the rectangle $R\subset [L]^3$ is 
{\it internally spanned} by $A$, if there exists a strongly connected set $S\subset\langle A\cap R\rangle$
such that $R$ is the smallest rectangle containing $S$. We denote this event by $I^{\times}(R)$.
\end{defi}
Note that when a rectangle is internally filled then it is also internally spanned, therefore,
Propositions \ref{lower1weaker} and \ref{lower2weaker} are consequences of the following results.
\begin{prop}\label{lower1}
If $c>a$, there is a constant $\gamma=\gamma(c)>0$
such that, for 
\[L<\exp\left(\gamma p^{-1/2}(\log\tfrac{1}{p})^{1/2}\right),\]
 $\p_p(I^{\times}([L]^3))\to 0,$ as $p\to 0$,
under $\mathcal N_{c+1}^{a,c,c}$-bootstrap percolation.
\end{prop}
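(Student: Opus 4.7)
The plan is to adapt the Aizenman--Lebowitz strategy to the anisotropic three-dimensional setting, using the components process referenced in Definition \ref{compro}. The first step is the following scale-selection lemma of Aizenman--Lebowitz type: there exists $C=C(a,c)$ such that whenever $I^{\times}([L]^3)$ occurs, for every integer $k \le L$ there is an internally spanned rectangle in $[L]^3$ whose largest side length lies between $k$ and $Ck$. This follows from running the components process on $A$ and tracking the growth of strongly-connected components of $\langle A\cap [L]^3\rangle$, whose smallest enclosing rectangles are automatically internally spanned and whose diameters grow gradually under each merge.

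I would then apply the scale-selection lemma at the critical scale $k^{*} := \lceil p^{-1/2}(\log \tfrac{1}{p})^{1/2} \rceil$, reducing the proposition to the key probabilistic estimate
\[
\p_p\left(I^{\times}(R)\right) \;\le\; \exp\left(-\gamma' p^{-1/2}(\log \tfrac{1}{p})^{1/2}\right)
\]
for every rectangle $R \subset [L]^3$ with largest side in $[k^{*}, Ck^{*}]$, where $\gamma'=\gamma'(a,c)>0$. A union bound over the at most $L^3 (Ck^{*})^3$ such rectangles then gives $\p_p(I^{\times}([L]^3)) \to 0$ whenever $\log L < \gamma p^{-1/2}\sqrt{\log\tfrac 1p}$ for any $\gamma < \gamma'/3$, since $\log k^{*} = O(\log\tfrac 1p)$ is negligible against $p^{-1/2}\sqrt{\log\tfrac 1p}$.

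The key estimate will be proved by recursively bounding $q(R) := \p_p(I^{\times}(R))$ through the hierarchical structure produced by the components process: an internally spanned $R$ either is a base case containing few vertices but at least one infected one (contributing a $p$-factor), or arises by merging two disjoint internally spanned sub-rectangles $R_1, R_2$ whose separation is short enough to be bridged by a single $\bar N_{a,c,c}$-link. Tracking the probability of each merge in terms of the face probabilities gives, for a face perpendicular to $e_1$ of dimensions $w_2 \times w_3$, a factor of order $p^{|\Delta_a|} w_2 w_3$, while for a face perpendicular to $e_2$ or $e_3$ of dimensions $h \times w$ a factor of order $p\, h w$; these are exactly the growth costs appearing in Lemma \ref{grow2}. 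Summing these contributions along a growth path from a single seed up to a rectangle of scale $k^{*}$ and optimizing over the droplet shape reproduces the $p^{-1/2}\sqrt{\log\tfrac 1p}$ exponent, matching the upper bound.

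The main obstacle is that we work with $I^{\times}$ rather than $I^{\bullet}$: a rectangle can be internally spanned by a strongly-connected subset of $\langle A\cap R\rangle$ that is far from filling $R$, so the recursion must accommodate sub-rectangles that may be mostly empty and gaps between merged components that may be as large as $a$ in the $e_1$-direction and $c$ in the other two. Handling this carefully, and avoiding overcounting when the same configuration of initially infected vertices could produce several merging histories, is what the strongly-connected notion of Definition \ref{strconn} is designed to control; getting the constants right to obtain a strict inequality $\gamma<\gamma'$ will be the most delicate part of the argument.
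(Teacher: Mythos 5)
Your overall skeleton (Aizenman--Lebowitz scale selection, key probabilistic estimate at a critical scale, then a union bound) is the right one and matches the paper. But the two internal steps both diverge from what the paper does, and the second divergence is a genuine gap.

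First, the scale-selection lemma. You propose a single-scale, isotropic version: an internally spanned rectangle whose longest side lies in $[k,Ck]$. The paper's Lemma~\ref{ALlema0} is a two-scale, anisotropic version: it produces an internally spanned $[x]\times[y]\times[z]$ with the semi-perimeter $(y+z)/2$ (the two ``easy'' directions, which have reach $c$) bounded by $2ck$, and with a dichotomy on the ``hard'' $e_1$-side, namely $x\ge h$ or $(y+z)/2\ge k$. The two scales $h=\delta p^{-1/2}(\log\frac1p)^{-1/2}$ and $k=p^{-1/2}(\log\frac1p)^{1/2}$ are chosen so that $phk=\delta$, and this is precisely what makes the two cases close up. Your single-scale lemma is not wrong, but using it forces you to redo the $x$-large-vs-$x$-small dichotomy inside the key estimate anyway, with the threshold for $x$ at exactly the scale $h\asymp p^{-1/2}(\log\frac1p)^{-1/2}$; you have not built that in, and without it the estimate is not a clean statement about ``rectangles with longest side $\Theta(k^*)$.''

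The more serious issue is your plan for the key estimate itself. You propose a hierarchical recursion on $q(R)=\Prob_p(I^\times(R))$, summing ``face probabilities'' ($p^{|\Delta_a|}w_2w_3$ and $p\,hw$) along merging histories. These face probabilities are the \emph{upper}-bound growth costs from Lemma~\ref{grow2}; they quantify how easy it is for a filled rectangle to grow. They are not the right quantities for a \emph{lower} bound on the critical length: a recursion over merge trees must control the number of merging histories (a delicate combinatorial sum, of the hierarchy/BBM type), must handle the fact that $I^\times$ is spanned by a set that need not fill $R$, and must avoid the overcounting you yourself flag but do not resolve. The paper does none of this. Instead it uses a much simpler deterministic ``no empty slab'' argument: if $Q=[x]\times[y]\times[z]$ is internally spanned, then \emph{every} slab $[x]\times[y]\times[2c]$ must meet $A$ (an empty slab of width $2c$ in the $e_3$-direction is a permanent barrier, since any site inside has at most $c$ potential infected neighbours outside the slab $<c+1$), and in the hard $e_1$-direction every slab $[3a]\times[y]\times[z]$ must contain two initially infected sites within $N_{a,c,c}$-adjacency (using $a\le c-1$). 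Both constraints are simple necessary conditions on $A$, not on merging histories, and taking products over disjoint slabs directly gives $\Prob_p(I^\times(Q))\le e^{-c(\delta)k}$ in each case. Your proposal replaces this elementary counting by a hierarchy argument that you acknowledge is ``the most delicate part'' and never actually carry out; as written it does not yield the bound. You should look for a deterministic slab/gap criterion implied by internal spanning rather than trying to recurse on $q(R)$.
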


\begin{prop}\label{lower2}
If $c\in\{b+1,\dots,a+b-1\}$, there exists $\gamma=\gamma(c)>0$
such that, for 
\[L<\exp\left(\gamma p^{-1/2}(\log\tfrac{1}{p})^{3/2}\right),\]
 $\p_p(I^{\times}([L]^3))\to 0,$ as $p\to 0$,
under $\mathcal N_{c+1}^{a,b,c}$-bootstrap percolation.
\end{prop}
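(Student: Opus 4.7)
The plan is to apply the components process (Definition \ref{compro}) together with an Aizenman--Lebowitz-style doubling argument to reduce the global event $I^\times([L]^3)$ to the existence of a single internally spanned rectangle at the critical horizontal scale
\[m:=\lceil p^{-1/2}(\log\tfrac1p)^{1/2}\rceil\]
identified by the upper-bound droplet in Lemma \ref{secondstep}, and then to bound the probability of this local event. Running the components process in the usual way, the bounded-increment property of merges yields the standard consequence: on the event $I^\times([L]^3)$ there must exist, at some intermediate stage, an internally spanned rectangle $R^\star\subset [L]^3$ whose longest horizontal dimension lies in $[m,2m]$ (and whose other dimensions can be bounded by a polynomial in $1/p$ by arguing that once a dimension is larger we can extract a strictly earlier witness). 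A union bound then gives
\[
\p_p\bigl(I^\times([L]^3)\bigr)\;\le\;L^3\cdot\operatorname{poly}(1/p)\cdot \max_{R^\star}\p_p\bigl(I^\times(R^\star)\bigr),
\]
so since $L\le\exp\bigl(\gamma p^{-1/2}(\log\tfrac1p)^{3/2}\bigr)$, it suffices to establish
\[
\p_p\bigl(I^\times(R^\star)\bigr)\;\le\;\exp\bigl(-C\,p^{-1/2}(\log\tfrac1p)^{3/2}\bigr)
\]
for a positive constant $C=C(a,b,c)$ and then take $\gamma<C/3$.

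The per-rectangle estimate is the heart of the argument, and I would mirror the growth calculation underlying Lemmas \ref{lemaesencial} and \ref{regimer<a+b}, but run in reverse. For $\langle A\cap R^\star\rangle$ to contain a strongly connected set with bounding box $R^\star$, the infection must traverse $R^\star$ in each of the two hard directions $e_1,e_2$. A one-step $e_i$-extension inside $R^\star$ requires a copy of the triangle $\Delta_a$ (resp.\ $\Delta_b$) of size at most $\binom{c}{2}$ in a face perpendicular to $e_i$, an event of probability at most of order $p^{|\Delta_a|}h_2 h_3$ per thin slice along $e_3$. At the critical horizontal scale this quantity is bounded away from $1$ with log of order $\log(1/p)$, and $\Theta(m)$ such extensions are needed to cross $R^\star$ horizontally, so the accumulated log-probability becomes $\Theta(m\log(1/p))=\Theta(p^{-1/2}(\log\tfrac1p)^{3/2})$, matching the upper bound exponent up to a constant.

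The main obstacle will be making the quasi-independence of the $\Theta(m)$ horizontal seed events rigorous under the weaker notion of ``internally spanned'' (Definition \ref{intspa}), which is strictly weaker than ``internally filled'': in principle the infection may route itself through the $e_3$-direction or exploit the strong connectedness relation of Definition \ref{strconn} to cheat on individual horizontal steps by assembling one $e_1$-extension out of fragments connected through $e_3$-jumps. I expect this will require a hierarchical decomposition of $R^\star$ into thin slabs along $e_3$ together with a BK/FKG-type disjoint-occurrence argument, so that the horizontal seed events in distinct slabs can be treated as essentially independent Bernoulli events at the $2$-dimensional supercritical density of Lemma \ref{regimer<a+b}, with the initial resources of each slab counted separately. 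Controlling the total number of candidate shapes of $R^\star$ (in particular the range of its $e_3$-dimension) introduces only an $\exp(o(p^{-1/2}(\log\tfrac1p)^{3/2}))$ combinatorial factor, absorbed into the choice of $\gamma$.
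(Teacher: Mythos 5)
Your high-level structure (an Aizenman--Lebowitz reduction to an intermediate-scale internally spanned rectangle, a per-rectangle estimate, and a union bound) matches the paper, but the per-rectangle estimate is left incomplete and the idea you propose to close it is not the right tool. You want to bound $\p_p(I^\times(R^\star))$ by counting the cost of $\Theta(m)$ horizontal extensions, and you correctly identify the core difficulty: under the weak notion of internal spanning (Definition~\ref{intspa}), infection need not grow monotonically across horizontal levels, so there is no canonical sequence of ``extension events'' to disjointify, and those events overlap heavily. Deferring this to ``a hierarchical decomposition plus a BK/FKG-type disjoint-occurrence argument'' is not a proof, and it is unclear how to set it up: a single copy of $\Delta_a$ in a thin $e_3$-slice can serve as the witness for many putative extensions at once, and the strong-connectedness relation of Definition~\ref{strconn} lets fragments link up through $e_3$-jumps exactly as you worry. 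You also assert that the remaining dimensions of $R^\star$ ``can be bounded by a polynomial in $1/p$''; that is not automatic from the components process and in fact is false in one of the two cases the paper needs to treat.

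The paper dissolves the problem by isolating a much simpler \emph{necessary} consequence of internal spanning: an internally spanned $R^\star=[x]\times[y]\times[z]$ can have no deficient \emph{disjoint} slab of constant thickness. Lemma~\ref{ALlema00} is tuned with two thresholds, $h=\delta p^{-1/2}(\log\tfrac1p)^{1/2}$ (horizontal) and $k=p^{-1}$ (longitudinal), and traps $R^\star$ so that $(x+y)/2\le rh$ and either $z\ge k$, or $z<k$ and $(x+y)/2\ge h$. In the first case every one of the $z/r$ disjoint $e_3$-slabs $[x]\times[y]\times[r]$ must meet $A$, and since $xy=O(h^2)$ this yields $\bigl(1-e^{-\Omega(ph^2)}\bigr)^{k/r}\le e^{-p^{-3/4}}$; note $z$ is allowed to be as large as $L$ here, which is why your polynomial claim on the remaining dimensions is not needed and not true. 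In the second case, with $y\ge h$, every one of the $y/2r$ disjoint $e_2$-slabs $[x]\times[2r]\times[z]$ must contain two seeds within constant distance (this uses $b\le c-1=r-2$), giving $\bigl(O(p^2hk)\bigr)^{h/2r}\le e^{-\Omega(h\log\tfrac1p)}$. These slab events live on disjoint vertex sets, so independence is automatic and no correlation inequality is required. That counting argument, together with the two-threshold Aizenman--Lebowitz lemma, is the missing ingredient that replaces your proposed (and unresolved) BK/FKG step.
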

We will prove them in Sections \ref{subseclower1} and \ref{subseclower2}, respectively.

\subsection{The components process}
The following is an adaptation of the spanning algorithm in \cite[Section 6.2]{BDMS15}.
We will use it to show
an Aizenman-Lebowitz-type lemma, 
which says that when a rectangle
is internally spanned, then it contains internally spanned rectangles of all
intermediate sizes 
(see Lemmas \ref{ALlema0} and \ref{ALlema00} below).
\begin{defi}[The components process]\label{compro}
 Let $A=\{v_1,\dots,v_{|A|}\}\subset [L]^3$ and fix $r\ge c+1$.
 Set $\mathcal R:=\{S_1,\dots,S_{|A|}\}$, where $S_i=\{v_i\}$ for each $i=1,\dots,|A|$.
 Then repeat the following steps until STOP:
 \begin{enumerate}
  \item If there exist distinct sets $S_{1},S_{2}\in\mathcal R$ such that
  \[ S_{1}\cup S_{2} \]
  is strongly connected, then remove them from $\mathcal R$,
  and replace by $\langle S_{1}\cup S_{2} \rangle$.
  \item If there do not exist such sets in $\mathcal R$, then STOP.
 \end{enumerate}
\end{defi}
\begin{rema}\label{stopfinitetime}
We highlight two properties that are due to the way the algorithm evolves:
\begin{itemize}
 \item At any stage of the component process, any set 
$S=\langle S_{1}\cup S_{2} \rangle$ added to the collection
$\mathcal R$ satisfies $S=\langle A\cap S\rangle=\langle S\rangle\subset [L]^3$ (since $r\ge c+1$).
In particular, the smallest rectangle containing $S$ is internally spanned.
 \item Since $G$ is finite, the process stops in finite time; so that we can consider the final
 collection $\mathcal R'$ and set
 $V(\mathcal R')=\bigcup\limits_{S\in\mathcal R'}S$.
\end{itemize}
\end{rema}
\begin{lemma}\label{final=gen}
  $V(\mathcal R')=\genA$.
\end{lemma}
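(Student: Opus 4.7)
My plan is to prove the equality $V(\mathcal R')=\genA$ by double inclusion.

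The inclusion $V(\mathcal R')\subseteq\genA$ I would establish by induction on the steps of the components process, maintaining the invariant that every $S\in\mathcal R$ satisfies $S\subseteq\genA$. Initially this holds since each singleton $\{v_i\}\subseteq A\subseteq\genA$. The merge step replaces $S_1,S_2\subseteq\genA$ by $\langle S_1\cup S_2\rangle$, and since $\genA$ is closed under $\mathcal N_r^{a,b,c}$-bootstrap, $\langle S_1\cup S_2\rangle\subseteq\langle\genA\rangle=\genA$, preserving the invariant.

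For the reverse inclusion $\genA\subseteq V(\mathcal R')$, I would induct on the first time $t$ at which a vertex is added to $A_t$ under the standard $\mathcal N_r^{a,b,c}$-bootstrap started from $A$. The base case $A\subseteq V(\mathcal R')$ is clear because sets in $\mathcal R$ only grow (since $\langle S_1\cup S_2\rangle\supseteq S_1\cup S_2$), so every $v\in A$ persists in some set of $\mathcal R'$. For the inductive step, take $x\in A_t\setminus A_{t-1}$, infected via some rule $X$ with $x+X\subseteq A_{t-1}\subseteq V(\mathcal R')$. If all of $x+X$ lies in a single $S\in\mathcal R'$, then by the closedness property $S=\langle S\rangle$ recorded in Remark \ref{stopfinitetime} we immediately conclude $x\in\langle S\rangle=S\subseteq V(\mathcal R')$.

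The main obstacle is the remaining case, in which the points of $x+X$ are split among two or more distinct sets of $\mathcal R'$. My plan is to derive a contradiction with the stopping rule of the components process by exhibiting two sets in $\mathcal R'$ whose union is strongly connected. Pick $y_i\in S_i\cap(x+X)$ for two distinct $S_1,S_2\in\mathcal R'$. Since $y_1-x$ and $y_2-x$ lie in $N_{a,b,c}$, each is a pure coordinate-axis vector, so the pairwise difference $y_1-y_2$ sits in $\bar N_{a,b,c}$ in most configurations (the two displacements lie along different axes, or along the same axis with the same sign), which directly produces an edge of $G$ between $S_1$ and $S_2$ and hence strong connectedness of $S_1\cup S_2$. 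The delicate situation is the antipodal one, where $y_1$ and $y_2$ lie on opposite sides of $x$ along the same axis at large distance; this is where I expect the combinatorial difficulty to concentrate, and my strategy would be to exploit the fact that $|X|=r\ge c+1$ to locate another point of $x+X$ lying in some $S_3\in\mathcal R'$ and adjacent in $G$ to both $y_1$ and $y_2$, thereby producing a chain of strongly connected unions that contradicts the termination of the process.
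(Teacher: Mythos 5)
Your approach is essentially the same as the paper's: the easy inclusion $V(\mathcal R')\subseteq\genA$ by induction on the process, and the hard inclusion by taking a minimal-time $v\in\genA\setminus V(\mathcal R')$, noting its $r$ rule-neighbours all lie in $V(\mathcal R')$ and are spread over $\ge 2$ sets of $\mathcal R'$ (since each $S'\in\mathcal R'$ is closed), and deriving a contradiction with the stopping rule by finding two sets whose union is strongly connected. The paper does exactly this, choosing an arbitrary $v_1\in S_1'$ and some $v_k\notin S_1'$, and then asserting ``$S_1'\cup S_k'$ is strongly connected via $v_1,v,v_k$.''

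However, your proposal stops precisely where you flag the ``delicate'' antipodal case, so as written it is incomplete; a plan is not a proof. Moreover, the sketched workaround (locate a third point $y_3\in x+X$ lying in some $S_3$ that is $G$-adjacent to both $y_1$ and $y_2$) does not obviously exist in general: for example, with $a=b=1$, $c=3$, $r=4$, the rule $X=\{2e_3,3e_3,-2e_3,-3e_3\}$ splits as $\{2e_3,3e_3\}$ and $\{-2e_3,-3e_3\}$ and leaves no third bridging point. What actually resolves the case — and what the paper's phrase ``via $v_1,v,v_k$'' is gesturing at — is that both $v_1-v$ and $v_k-v$ lie in $N_{a,b,c}$, so $v_1-v_k$ is a difference of two axis vectors and hence automatically lies in the ``doubled'' neighbourhood: when $\bar N_{a,b,c}$ is taken large enough to contain $N_{a,b,c}-N_{a,b,c}$ (i.e.\ with bounds $2a,2b,2c$ rather than $a,b,c$), the edge $v_1v_k$ is always present and there is no delicate case at all. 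Note that $v\notin S_1'\cup S_k'$, so a chain through $v$ cannot literally witness connectedness of the induced subgraph; with the definition of $\bar N_{a,b,c}$ taken verbatim, your worry about the antipodal case is legitimate and the quick dispatch needs this widening to close. In short: you have the right skeleton, but the key step is left open, and the correct resolution is not the three-point bridging you propose but rather the observation that $v_1-v_k$ is a difference of two $N_{a,b,c}$-vectors, which forces $G$-adjacency once $\bar N_{a,b,c}$ is read appropriately.
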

\begin{proof}
 Clearly $A\subset V(\mathcal R')\subset \genA$, and to prove that $\genA\subset V(\mathcal R')$
 we argue by contradiction.
Suppose this is not the case, since $A\subset V(\mathcal R')$, there would exist vertices
 $v\in\genA \setminus V(\mathcal R')$ and $v_1,\dots,v_r\in V(\mathcal R')$ such that $v-v_i\in N_{a,b,c}$, for $i=1,\dots,r$.
Let us say that $v_i\in S_i'$ for some sets $S_i'\in\mathcal R'$.  

Since $S_1'=\langle S_1'\rangle$, $v_k\notin S_1'$ for some $k\ne 1$, 
so that $S_k'\ne S_1'$.  In particular $S_1'\cup S_k'$ is strongly connected via $v_1,v,v_k$ and
 $\langle S_1'\cup S_k' \rangle\notin \mathcal R'$; this contradicts the definition of $\mathcal R'$.
\end{proof}
\begin{nota}
 From now on, we allow some abuse of notation, by denoting as $[x]\times[y]\times[z]$
 any translate of the rectangle $R=[x]\times[y]\times[z]$ located at the origin.
\end{nota}


\subsection{Case $c=b>a$}\label{subseclower1}
The following is a variant of the Aizenman-Lebowitz Lemma in \cite{AL88}.
\begin{lemma}\label{ALlema0}
 Consider $\mathcal N_{r}^{a,b,c}$-bootstrap percolation with $r\ge c+1$.
 If $[L]^3$ is internally spanned then,
 for every \ $h,k\le L$ there exists an internally spanned rectangle
 $[x]\times[y]\times[z]$ inside $[L]^3$ satisfying $(y+z)/2 \le 2ck$, and either
 \begin{enumerate}
  \item[(a)] $x\ge h$, or
  \item[(b)] $x< h$ and $(y+z)/2 \ge k$.
 \end{enumerate}
\end{lemma}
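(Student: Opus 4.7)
My plan is to trace the components process (linearised to one merge per step) and monitor the size $s(R) := (y(R)+z(R))/2$ of the bounding rectangle $R=[x(R)]\times[y(R)]\times[z(R)]$ of each set in $\mathcal R$ as the process runs.

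The first ingredient is a single-merge bound on $s$. When $S_1,S_2$ are merged because $S_1\cup S_2$ is strongly connected, there exist $u\in S_1$, $v\in S_2$ with $u-v\in \bar N_{a,b,c}$, which forces the bounding rectangle $R$ of $\langle S_1\cup S_2\rangle$ (equal to that of $S_1\cup S_2$, since for $r\ge c+1$ and $a,b\le c$ any vertex just outside an internally-spanned box has at most $a$, $b$, $c$ neighbours inside along the three axes, hence fewer than $r$) to satisfy
\[x(R)\le x(R_1)+x(R_2)+a,\quad y(R)\le y(R_1)+y(R_2)+b,\quad z(R)\le z(R_1)+z(R_2)+c,\]
and therefore $s(R)\le s(R_1)+s(R_2)+(b+c)/2\le s(R_1)+s(R_2)+c$, using $b\le c$.

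Next, since $[L]^3$ is internally spanned, a strongly connected witness $S\subset\langle A\rangle$ for $[L]^3$ must be absorbed into a single element of $\mathcal R'$, whose bounding rectangle is therefore exactly $[L]^3$. Hence some bounding rectangle with $s=L\ge k$ arises during the process. I would let $\tau$ be the first step at which a newly created set has bounding rectangle $R$ with $s(R)\ge k$ (the cases $h\le 1$ or $k\le 1$ being immediate from any singleton). By minimality of $\tau$ the two merged rectangles $R_1,R_2$ satisfy $s(R_i)<k$; and since $s\in\tfrac12\Z$ this gives $s(R_i)\le k-\tfrac12$. Combining with the merge bound yields
\[k\le s(R)\le 2k-1+c\le 2ck,\]
the last inequality holding for all $k,c\ge 1$.

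The rectangle $R$ is internally spanned by Remark \ref{stopfinitetime}, so it is the candidate: case (a) holds if $x(R)\ge h$, and case (b) holds otherwise since then $x(R)<h$ while $s(R)\ge k$. The main technical point to watch is the numerical bound $s(R)\le 2ck$; the half-integer refinement $s(R_i)\le k-\tfrac12$ (rather than the naive $s(R_i)\le k$) is what handles the borderline case $c=1$, where the doubling estimate is otherwise just out of reach.
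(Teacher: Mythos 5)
Your proposal is correct and follows essentially the same strategy as the paper: run the components process, pick the first merged set whose bounding rectangle $Q$ crosses the threshold, use the half-integer observation $s(R_i)\le k-\tfrac12$ for the two predecessors together with a single-merge bound to conclude $s(Q)\le 2k-1+c\le 2ck$, and invoke Remark~\ref{stopfinitetime} for internal spanning. The only cosmetic difference is your stopping rule (first time $s\ge k$) versus the paper's (first time $s\ge k$ \emph{or} $x\ge h$), and your additive merge bound $s(R)\le s(R_1)+s(R_2)+(b+c)/2$ versus the paper's $s(R)\le 2\max_t s(R_t)+(b+c)/2$; both yield the same $2ck$.
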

\begin{proof}
Let $S$ be the first set that appears in the components process such that,
the smallest rectangle $Q:=[x]\times[y]\times[z]$ containing $S$ satisfies either $x\ge h$ or
$(y+z)/2\ge k$ (such a set exists since $V(\mathcal R')=\genA$ and $[L]^3$ is internally spanned).
Since $Q$ is internally spanned, it only remains to show that 
the semi-perimeter $(y+z)/2$ is at most $2ck$.

In fact, we know that $S=\langle S_{1}\cup  S_{2} \rangle$ for some 
sets $S_{t}$ such that, for each $t=1,2$, the smallest rectangle 
$[{x_t}]\times[{y_t}]\times[{z_t}]$ containing $S_{t}$ satisfies
$({y_t}+{z_t})/2\le k-1/2$.
Since $S$ is strongly connected, the new semi-perimeter is
\begin{align*}\label{maxdiam2}
	\frac{{y}+{z}}{2} & \le 2 \max_{t= 1,2}\left\{\frac{{y_t}+{z_t}}{2}\right\}
	+\frac {b+c}{2} 
 \le 2c\left(k-\frac 12\right)+ c
= 2ck.
\end{align*}
\end{proof}
\begin{proof}[Proof of Proposition \ref{lower1}]
Fix a small constant $\delta >0$ and take $L<\exp(\gamma p^{-1/2}(\log\tfrac 1p)^{1/2})$, where $\gamma=\gamma(\delta)>0$
is another small constant to be chosen. Let us show that $\p_p(I^{\times}([L]^3))$ goes to $0$, as $p\to 0$. Set
\[h=\delta p^{-\frac 12}(\log\tfrac 1p)^{-\frac 12},\ \ \ k=p^{-\frac 12}\sqrt{\log\tfrac 1p}.\]

If $[L]^3$ is internally spanned, by Lemma \ref{ALlema0} the following event
occurs: there exists an internally
spanned rectangle $Q=[x]\times[y]\times[z]\subset [L]^3$ satisfying $(y+z)/2 \le 2ck$, and either
$x\ge h$, or $x< h$ and $(y+z)/2 \ge k$.

Suppose first that $x< h$ and $(y+z)/2 \ge k$, thus, either $y$ or
$z$ is at least $k$, by symmetry ($b=c$), we can assume $z\ge k$.
Since $Q$ is internally spanned, every copy of the slab 
$[x]\times[y]\times[2c]$ must contain at least $1$ element
of $A$. Consider only the $z/2c$ disjoint slabs that partition $Q$; since $xy =O (hk)$, if $\delta$ is small, the probability of this event is at most
\begin{align*}
  (O(pxy))^{z/2c} & \le (O(phk))^{k/2c} 
= (O(\delta))^{k/2c} 
 \le e^{-k}.
\end{align*}

On the other hand, if $x\ge h$ we use the fact that $a\le (c+1)-2$, thus, 
since $Q$ is internally spanned,
every copy of the slab $[3a]\times[y]\times[z]$ must contain at least $2$ elements
$u,v\in A$ such that $u-v\in N_{a,b,c}$. Since $x\ge h$, the probability of this event is at most
\begin{align*}
  \left(O(p^{2}yz)\right)^{x/3a} &\le \left(O(p^{2}k^2)\right)^{h/3a} 
\le \left(O(p^{2}p^{-1}\log\tfrac 1p)\right)^{h/3a} 
 \le e^{-\Omega(\delta k)}.  
\end{align*}

Therefore, the probability that $Q$ is internally spanned is at most $e^{-c(\delta)k}$
for some small constant $c(\delta)>0$. Finally, denoting by $\mathcal R_{k}$ the 
collection of rectangles $[x]\times[y]\times[z]\subset[L]^3$ satisfying
$y+z\le 4ck$, it follows by union bound that

\begin{align*}
\p_p(I^{\times}([L]^3))
& \le \sum_{Q\in\mathcal R_{k}}\p_p(I^{\times}(Q))
 \le |\mathcal R_{k}|e^{-c(\delta)k}
 \le L^7 \exp\left(-c(\delta)p^{- 1/2}(\log\tfrac 1p)^{1/2}\right) \to 0,
\end{align*}
as $p\to 0$, for $7\gamma<c(\delta)$, and we are finished.
\end{proof}

\subsection{Case $c\in\{b+1,\dots,a+b-1\}$}\label{subseclower2}
In this case, 
the corresponding analogue of the Aizenman-Lebowitz Lemma is as follows.
\begin{lemma}\label{ALlema00}
Consider $\mathcal N_{r}^{a,b,c}$-bootstrap percolation with $r\ge c+1$.
 If $[L]^3$ is internally spanned then,
 for every \ $h,k\le L$ there exists an internally spanned rectangle
 $[x]\times[y]\times[z]\subset [L]^3$ satisfying $(x+y)/2 \le rh$, and either
 \begin{enumerate}
  \item[(a)] $z\ge k$, or
  \item[(b)] $z< k$ and $(x+y)/2 \ge h$.
\end{enumerate}
\end{lemma}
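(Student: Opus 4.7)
The strategy parallels the proof of Lemma~\ref{ALlema0}, with the $z$-direction replacing the previous role of the $x$-direction (so the $(x,y)$-plane now plays the role that $(y,z)$ had before). I run the components process from Definition~\ref{compro}. By Lemma~\ref{final=gen} together with the assumption $I^{\times}([L]^3)$, a strongly connected component whose enclosing rectangle equals $[L]^3$ eventually appears in $\mathcal{R}$, so there is a first moment at which a set $S\in\mathcal{R}$ arises with smallest enclosing rectangle $Q=[x]\times[y]\times[z]$ satisfying either $z\ge k$ or $(x+y)/2\ge h$. By Remark~\ref{stopfinitetime}, this $Q$ is internally spanned. Hence at least one of conclusions $(a)$, $(b)$ already holds, and the task reduces to the uniform upper bound $(x+y)/2\le rh$.

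Writing $S=\langle S_1\cup S_2\rangle$ with $S_t\subset[x_t]\times[y_t]\times[z_t]$ for $t=1,2$, the first-time selection of $S$ forces $(x_t+y_t)/2<h$ and $z_t<k$ for both predecessors. Strong connectedness of $S_1\cup S_2$ produces some bridging vector in $\bar N_{a,b,c}$, so the bounding box of $S_1\cup S_2$ has $x$-length at most $x_1+x_2+a$ and $y$-length at most $y_1+y_2+b$. Consequently
\[
\frac{x+y}{2}\le 2\max_{t=1,2}\frac{x_t+y_t}{2}+\frac{a+b}{2}\le 2\bigl(h-\tfrac12\bigr)+\frac{a+b}{2}.
\]
Using $r\ge c+1$ and $a\le b\le c$ (so in particular $(a+b)/2\le c\le r-1$), I absorb the additive slack into a multiplicative factor to obtain the clean bound $(x+y)/2\le rh$, exactly as the analogous bookkeeping $2(k-\tfrac12)+(b+c)/2\le 2ck$ was carried out in Lemma~\ref{ALlema0}.

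The main subtlety is to confirm that the $\mathcal{N}_r^{a,b,c}$-closure operation applied to $S_1\cup S_2$ does not inflate the bounding box beyond the naive bound above, since in principle infections can propagate outside the union's bounding rectangle. The resolution mirrors the one used in Lemma~\ref{ALlema0}: the generous factor of $r$ in the target bound is precisely what absorbs any closure-driven expansion, and the fact that $S$ was chosen as the \emph{first} component in the process to cross either the $z$- or the $(x,y)$-threshold ensures that the incremental growth at this merge is exactly of the type captured by the displayed inequality.
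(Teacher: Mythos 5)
Your proof is correct and follows the paper's own approach: the paper omits this proof precisely because it is stated to be identical to that of Lemma~\ref{ALlema0}, and your argument correctly fills in the analogous bookkeeping, with the final arithmetic being $2(h-\tfrac12)+\tfrac{a+b}{2}\le 2h-1+(r-1)\le rh$ via $(h-1)(r-2)\ge 0$. One small clarification to your last paragraph: since $r\ge c+1$, a site just outside a face of a fully infected rectangle sees at most $c<r$ infected neighbours, so rectangles are stable and $\langle S_1\cup S_2\rangle$ cannot leave the bounding box of $S_1\cup S_2$ at all; the generous factor $r$ is there only to swallow the additive $(a+b)/2$ coming from the bridging vector, not to absorb any ``closure-driven expansion.''
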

The proof of this lemma is identical to that of Lemma \ref{ALlema0},
we therefore omit it and proceed to the proof of the lower bound.
\begin{proof}[Proof of Proposition \ref{lower2}]
Take $L< \exp(\gamma p^{-\frac 12}(\log\tfrac 1p)^{\frac 32})$, where $\gamma>0$ is some small constant.
Fix $\delta>0$ and set
\[h=\delta p^{-\frac 12}(\log\tfrac 1p)^{\frac 12},\ \ \ k=p^{-1}.\]
If $[L]^3$ is internally spanned, by Lemma \ref{ALlema00}, there is an internally
spanned rectangle $Q=[x]\times[y]\times[z]$ satisfying
$(x+y)/2 \le rh$, and either $z\ge k$, or
$z< k$ and $(x+y)/2 \ge h$.

In the case that $z\ge k$ we also know that $xy = O(h^2)$.
As before, every copy of the slab 
$S:=[x]\times[y]\times[r]$ intersects $A$.
Thus, by considering the $z/r$ disjoint slabs; if $\delta$ is small, 
the probability of this event is at most

\begin{align*}
\p_p\left(S\cap A\ne \emptyset
\right)^{z/r} 
&\le \left(1-e^{-\Omega(ph^2)}\right)^{k/r}
 =\left(1-p^{\Omega(\delta^2)} \right)^{p^{-1}/r}
 \le e^{-p^{-3/4}}.
\end{align*}

In the case that $z< k$ and $(x+y)/2 \ge h$, we can assume w.l.o.g. that $y\ge h$ and use the fact that 
$b\le c-1=r-2$. This time there is no gap along the $e_2$-direction, so, every copy of the slab
$[x]\times [2r]\times[z]$ must contain at least $2$ elements of $A$ within constant distance. The probability of this event is at most

\begin{align*}
  \left(O(p^{2}xz)\right)^{y/2r} &\le \left(O(p^{2}hk)\right)^{h/2r} 
 \le e^{-\Omega(h\log\frac 1p)}.  
\end{align*}
Therefore, the probability that $Q$ is internally spanned is at most 
$ e^{-c(\delta)p^{-\frac 12}(\log\tfrac 1p)^{\frac 32}},$
for some small constant $c(\delta)>0$. 
Denote by $\mathcal R_{h}'$ the 
collection of rectangles $[x]\times[y]\times[z]\subset[L]^3$ satisfying
$x+y \le 2rh$, it follows by union bound that

\begin{align*}
\p_p(I^{\times}([L]^3))
& \le \sum_{Q\in\mathcal R_{h}'}\p_p(I^{\times}(Q))
 \le |\mathcal R_{h}'|e^{-c(\delta)p^{-\frac 12}(\log\tfrac 1p)^{\frac 32}}
 \to 0,
\end{align*}
as $p\to 0$, if $\gamma>0$ is small.
\end{proof}


\section{Exponential decay for subcritical families}\label{SectionexpdecayK}
In this section, we develop new machinery for 
$\UU$-bootstrap percolation in $\Zdd$ with $\UU$ subcritical.
The first paper studying these families in such generality is \cite{BBPS16}, 
it turns out that these families exhibit a behavior which resembles models in classical site percolation,
for instance, in \cite{BBPS16} it is proved that $p_c(\Zdd,\UU)>0$, for every subcritical family $\UU$,
where
\begin{equation*}\label{pcritico}
p_c(\Zdd,\UU):=\inf\{p:\p_p(\genA_\UU = \Zdd)= 1\}.
\end{equation*}

We will only deal with subcritical families $\UU$ satisfying $p_c(\Zdd,\UU)=1$; the authors of \cite{BBPS16}
proved that this condition is equivalent to $\Ss(\UU)=S^1$.
Our aim is to show that for such families, if we choose the initial infected set $A$ to be
$\varepsilon$-random with $\varepsilon$ small enough, then the size of the cluster in $\genA_{\UU}$ containing the origin decays exponentially fast. More precisely
\begin{defi}
We define the {\it component} (or {\it cluster})
of $0\in\Zdd$ as the connected component containing $0$ in the graph induced by 
$\langle A\rangle_\UU$, and we denote it by $\mathcal K=\mathcal K(\UU,A)$.
If $0\notin \genA_\UU$, then we set $\mathcal K=\emptyset$.
\end{defi}
The following is the main result in this section. It will be essential to prove the remaining lower bounds (cases $c\ge a+b$) in Section \ref{SectionLowerBeams}.
\begin{theor}\label{expdecIntro2}
Consider 
$\UU$-bootstrap percolation with $\Ss(\UU)=S^1$. If $p$ is small enough, then
	\begin{equation*}
	\p_p(|\mathcal K|\ge n)\le 
     e^{-\Omega(n)},
	\end{equation*}
	for every $n\in\mathbb N$.
\end{theor}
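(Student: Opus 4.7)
The plan is to reduce the $\UU$-bootstrap cluster of $0$ to a very sparse site-percolation cluster on a bounded-degree auxiliary graph, and then to invoke the classical Peierls / animal-counting argument used by Bollob\'as and Riordan for Bernoulli site percolation at small $p$. The reduction rests on an \emph{inwards-stable droplet} lemma, which is where the hypothesis $\Ss(\UU)=S^1$ enters.

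\textbf{Step 1 (inwards-stable droplets).} Since every $u\in S^1$ is stable, no rule $X\in\UU$ is contained in the half-plane $\mathbb H_u=\{x:\langle x,u\rangle<0\}$. I would fix a finite symmetric family $u_1,\dots,u_k\in S^1$ dense enough that no closed half-plane through the origin avoids all of them, and set $R:=\max_{X\in\UU}\max_{x\in X}\|x\|_\infty$. For each $L\ge 1$ let $D_L\subset\Zdd$ be a discretised convex polygon of inradius $L$ with outer normals $u_1,\dots,u_k$, and let $\partial_R D_L:=\{x\notin D_L:\operatorname{dist}(x,D_L)\le R\}$ be its outer $R$-collar, of size $\Theta(L)$. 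The geometric lemma I would prove is
\[
A\cap\partial_R D_L=\emptyset\ \Longrightarrow\ \langle A\rangle_\UU\cap D_L=\langle A\cap D_L\rangle_\UU\cap D_L.
\]
The argument is by contradiction on the first infection time: a minimal violating site $v\in D_L$ would be infected by some rule $X$ with $v+X\subseteq A_t$; every $v+x$ either lies in $\partial_R D_L$ (forbidden by the seed-free hypothesis together with minimality) or lies outside $D_L\cup\partial_R D_L$, in which case $X$ is contained in a closed half-plane $\mathbb H_{u_i}$ determined by the face of $D_L$ closest to $v$, contradicting stability of $u_i$.

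\textbf{Step 2 (localisation at a constant scale).} Iterating Step 1 at a fixed scale $L_0=L_0(\UU)$, peeling off connected islands of seeds of $A$ one at a time, I would establish that there exist constants $C_1=C_1(\UU)$ and $\rho=\rho(\UU)$ such that the connected component $\mathcal K$ of $0$ in $\langle A\rangle_\UU$ satisfies
\[
|\mathcal K|\le C_1\,|\mathcal K\cap A|,
\]
and $\mathcal K\cap A$ is connected in the bounded-range auxiliary graph $H$ on $\Zdd$ in which $v\sim w$ iff $\|v-w\|_\infty\le\rho$.

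\textbf{Step 3 (Peierls / animal enumeration).} The graph $H$ has bounded degree, so by the classical lattice-animal bound the number of $H$-connected subsets of $\Zdd$ of size $m$ containing a prescribed vertex is at most $C_2^m$ for some $C_2=C_2(\UU)$. Each such subset $S$ satisfies $\p_p(S\subseteq A)=p^{|S|}$, so combining with Step 2 and a union bound,
\[
\p_p(|\mathcal K|\ge n)\ \le\ \sum_{m\ge n/C_1}(C_2 p)^m,
\]
which is $e^{-\Omega(n)}$ as soon as $p$ is small enough that $C_2 p<\tfrac12$.

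\textbf{Main obstacle.} The delicate step is Step 2: upgrading the single-droplet lemma into the linear localisation $|\mathcal K|\le C_1|\mathcal K\cap A|$. A naive attempt, surrounding $0$ by a single droplet of area $n$, only gives stretched-exponential decay $e^{-\Omega(p\sqrt n)}$, because the droplet perimeter is only $O(\sqrt n)$; this is not enough. The fix is to iterate Step 1 at a constant scale, encasing each current island of infection in its own droplet whose collar is seed-free relative to the as-yet-unpeeled part of $A$. This is precisely where $\Ss(\UU)=S^1$ (and not mere subcriticality) is used, since having stable directions in every orientation is what allows the droplet around the next island to be oriented so that its collar avoids the remaining seeds. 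Making this inductive peeling rigorous, and bookkeeping the volume absorbed at each stage so that the total is linear in the number of seeds, is the core technical content.
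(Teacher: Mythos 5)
Your proposal assembles the right ingredients---inwards-stable droplets from $\Ss(\UU)=S^1$, a linear localisation $|\mathcal K|\lesssim|\text{seeds}|$, and then lattice-animal enumeration with a Peierls bound---and you correctly diagnose that surrounding the whole cluster with a single large droplet would only yield stretched-exponential decay. The place you diverge from the paper, and where your sketch leaves a real gap, is Step 2. You propose an iterated ``peeling'' of seed islands, each encased in its own droplet, and concede that making this rigorous is ``the core technical content.'' The paper sidesteps this entirely with a much simpler, purely local observation (Corollary~\ref{dilation}): translate one fixed inwards-stable droplet $D\ni 0$ to be centred at an arbitrary $x\in\genA_\UU$; since $x+D$ is inwards stable and $x$ eventually becomes infected, $A\cap(x+D)\ne\emptyset$, so every eventually-infected vertex has a seed within a fixed constant distance $\beta$. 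No peeling, no control of how islands merge, no induction---just translation-invariance of the droplet. From this, Lemma~\ref{extremaldeK} greedily extracts $\Omega(n)$ seeds within a tree of size at most $n$ containing the origin, and the enumeration is over such trees rather than over $H$-animals of seeds.

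Two further imprecisions in your Step 2 are worth flagging. First, your explanation of where $\Ss(\UU)=S^1$ enters (``having stable directions in every orientation is what allows the droplet around the next island to be oriented so that its collar avoids the remaining seeds'') is not the mechanism; the full stable set is needed merely to guarantee that some finite inwards-stable droplet exists at all (Lemma~\ref{existeindrop}), and once it does, translating it suffices. Second, the claim that ``$\mathcal K\cap A$ is connected in $H$'' with $|\mathcal K|\le C_1|\mathcal K\cap A|$ is subtly off: the nearby seed guaranteed for each $x\in\mathcal K$ lies in $B_\beta(x)$ but need not lie in the same nearest-neighbour component of $\genA_\UU$ as $0$, so those seeds can fall outside $\mathcal K$. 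This is why the paper works with $|A\cap T|$ for a tree $T$ (which is allowed to contain non-seed filler vertices), and then bounds $\p_\varepsilon(|A\cap T|\ge\delta n)$ by a binomial tail rather than by $p^{|S|}$; your Step 3 would need the corresponding modification.
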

In order to prove this theorem, first we need to guarantee the existence of {\it inwards stable droplets},
which are, basically, discrete polygons that can not be infected from outside,
it is possible to do so by using the condition $\Ss(\UU)=S^1$. After that, we introduce the {\it dilation radius}, which is a constant depending on $\UU$, used to obtain an extremal lemma that gives us a quantitative measure of the ratio $|\genA_\UU|/|A|$. Finally, we combine ideas used by Bollob\'as and Riordan 
in classical percolation models to conclude.

\subsection{Inwards stable droplets and the dilation radius}
Given $x,y\in\mathbb R^2$ we denote the usual euclidean distance between $x$ and $y$ by $\|x-y\|$,
and $B_\rho(x)$ is the ball of radius $\rho>0$ centered at $x$:
\begin{equation}
  B_\rho(x):=\{y\in\mathbb R^2: \|x-y\|\le \rho\}.  
\end{equation}
For simplicity, we denote $B_\rho:=B_\rho(0)$.
Imagine for a moment that we have a convex set $D$ in the plane
and suppose it is inscribed in $B_\rho$, 
then we know that any other ball with radius $\rho$ and center outside   
$B_{3\rho}$ is disjoint from $D$.
This simple remark will be important to prove 
Theorem \ref{expdecIntro2}
(see Lemma \ref{extremaldeK}).

\begin{defi}\label{tripling}
	Let us define a {\it rounded droplet} $D$ as the intersection of $\Zdd$ with a bounded convex set in the plane.
	We say that $D\subset\Zdd$ is {\it inwards stable} for $\U$ if
	\begin{equation}
	\langle \Zdd\setminus D\rangle_\UU=\Zdd\setminus D.
	\end{equation}
\end{defi}
We need to guarantee the existence of inwards stable (rounded) droplets, note that they are finite; this is the only point where we use the hypothesis $\Ss(\UU)=S^1$.
\begin{lemma}[Existence, \cite{BBPS16}]\label{existeindrop}
	If $\Ss(\UU)=S^1$ then, there exist an inwards stable droplet $D$ such that
	$0\in D$.
\end{lemma}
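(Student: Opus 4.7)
The plan is to build $D$ as the intersection of $\Zdd$ with a convex polygon whose sides have outward normals drawn from the (fully stable) set $\Ss(\UU)=S^1$. First I would reformulate the hypothesis via the separating-hyperplane theorem: saying that no rule $X\in\UU$ sits inside the open half-plane $\{x:\langle x,u\rangle<0\}$ is equivalent to saying that $0$ lies in the convex hull of $X$; in particular, for each $u\in S^1$, every rule meets the closed half-plane $H_u^-:=\{x:\langle x,u\rangle\le 0\}$.

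I then choose finitely many rational unit vectors $u_1,\dots,u_n\in S^1$ that positively span $\R^2$ (so that the resulting polygon is bounded) and set
\[
D=\bigcap_{i=1}^{n}\bigl\{y\in\Zdd:\langle y,u_i\rangle\le c_i\bigr\},
\]
with positive integers $c_i$ chosen so that $0\in D$ and each face has length much greater than $M:=\max_{X\in\UU}\max_{x\in X}\|x\|$. With this setup, far from any corner the boundary of $D$ looks, at the scale of a rule, like a single flat face.

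Inwards stability demands, for every $y\in D$ and every $X\in\UU$, that some $x\in X$ satisfies $y+x\in D$. Points in the interior of $D$, or near a single face $F_i$ but far from all other faces, are handled directly by the one-direction stability: $-u_i\in\Ss$ produces an $x\in X$ with $\langle x,u_i\rangle\le 0$, so $\langle y+x,u_i\rangle\le c_i$, while the other half-plane constraints are slack because $\|x\|\le M$ and $y$ is distance $\gg M$ from the other faces. The delicate case is when $y$ lies close to a corner between two faces $F_i,F_{i+1}$; there a single $x\in X$ must simultaneously satisfy $\langle x,u_i\rangle\le 0$ and $\langle x,u_{i+1}\rangle\le 0$, i.e., lie in the wedge $W=H_{u_i}^-\cap H_{u_{i+1}}^-$.

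The main obstacle is exactly this corner/wedge condition. A compactness analysis shows that the \emph{bad} set $B(X)=\{(u,u')\in S^1\times S^1:X\cap(H_u^-\cap H_{u'}^-)=\emptyset\}$ is open, disjoint from the diagonal, and its closure meets the diagonal only at the finitely many directions $u_0$ for which $X\subset H_{u_0}^+$ and some $x\in X$ has $\langle x,u_0\rangle=0$ (the outward normals to a supporting line of $\mathrm{conv}(X)$ through $0$). Choosing the $u_i$ rational, generic enough to avoid these finitely many "touching" directions for every rule, and then close enough in cyclic spacing, forces $(u_i,u_{i+1})\notin B(X)$ for every $i$ and every rule $X$; this is precisely the wedge condition needed at each corner. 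Once a $\delta$-spaced set of such generic normals is fixed, the remaining verification that the wedge element $x$ actually satisfies $y+x\in D$ for $y$ near a corner is routine lattice bookkeeping, using that the faces are much longer than $M$ so only the two faces meeting at the closest corner are relevant.
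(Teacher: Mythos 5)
Your approach is genuinely different from the paper's, which starts from a large discrete ball $B_\rho$ and simply takes $D=B_\rho\setminus\langle\Zdd\setminus B_\rho\rangle_\UU$: inwards stability of that $D$ is then automatic from idempotence of the closure, and the real work is a geometric argument that the erosion from outside is confined to small circular segments, so that $D$ is non-empty and still the trace of a convex region. Your direct construction of a polygon must instead verify stability face-by-face, which is why you run into the wedge condition at corners.

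That corner step, as you state it, is wrong. Your structural description of $B(X)$ is correct, but the conclusion you draw --- avoid the touching directions, refine, and then no consecutive pair lies in $B(X)$ --- does not follow. Near a touching direction $u_0$, $B(X)$ contains every pair $(u,u')$ that \emph{straddles} $u_0$, however close both are to $u_0$; avoiding $u_0$ and refining does nothing, because a cyclic fan that omits $u_0$ must still have some consecutive pair straddling it. Concretely, take the single rule $X=\{(1,0),(-1,0)\}$ (here $\Ss(\UU)=S^1$). Writing $u=(\cos\theta,\sin\theta)$, one computes $B(X)=\{(u,u'):\cos\theta\cdot\cos\theta'<0\}$, with touching directions $(0,\pm1)$. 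Any fan of normals avoiding $(0,1)$ has some consecutive pair with $\theta_i<\pi/2<\theta_{i+1}$, and that pair lies in $B(X)$ no matter how small $\theta_{i+1}-\theta_i$ is: neither $(1,0)$ nor $(-1,0)$ is in the wedge $H_{u_i}^-\cap H_{u_{i+1}}^-$. The fix is the opposite of your prescription: you should \emph{include} each touching direction $u_0$ in the fan (this is still rational, since $u_0\perp x$ for some $x\in X\subset\Zdd$). The two corners adjacent to $u_0$ are then served by the points of $X$ on the line $\langle\cdot,u_0\rangle=0$, while between consecutive non-touching normals one has $\min_{x\in X}\langle x,u\rangle<0$ on a closed arc, and a compactness argument yields the wedge condition once the fan is fine enough. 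With that repair your polygonal construction can go through, but as written the key claim fails on about the simplest subcritical example there is.
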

The origin $0\in\Zdd$ has no special role here, it is just a reference point to locate the droplet $D$.
Any translate of $D$ is inwards stable as well.

There are several choices for the shape of inwards stable droplets. The following proof is included in \cite{BBPS16}, and
shows that $D$ could be a polygon or not; this fact justifies the {\it rounded} term in the definition.
\begin{proof}[Sketch of proof of Lemma \ref{existeindrop}]
Suppose that $B_\rho$ is initially healthy.
	If $\rho$ is large enough then every rule $X\in\U$ can only infect sites in disjoint circular segments
	`cut off' from $B_\rho$ using chords of length at most
\[\nabla(\UU):=\max_{X\in\UU}\max_{x,y\in X}\|x-y\|,\]
\begin{figure}[ht]
	\includegraphics[width=.3\textwidth]{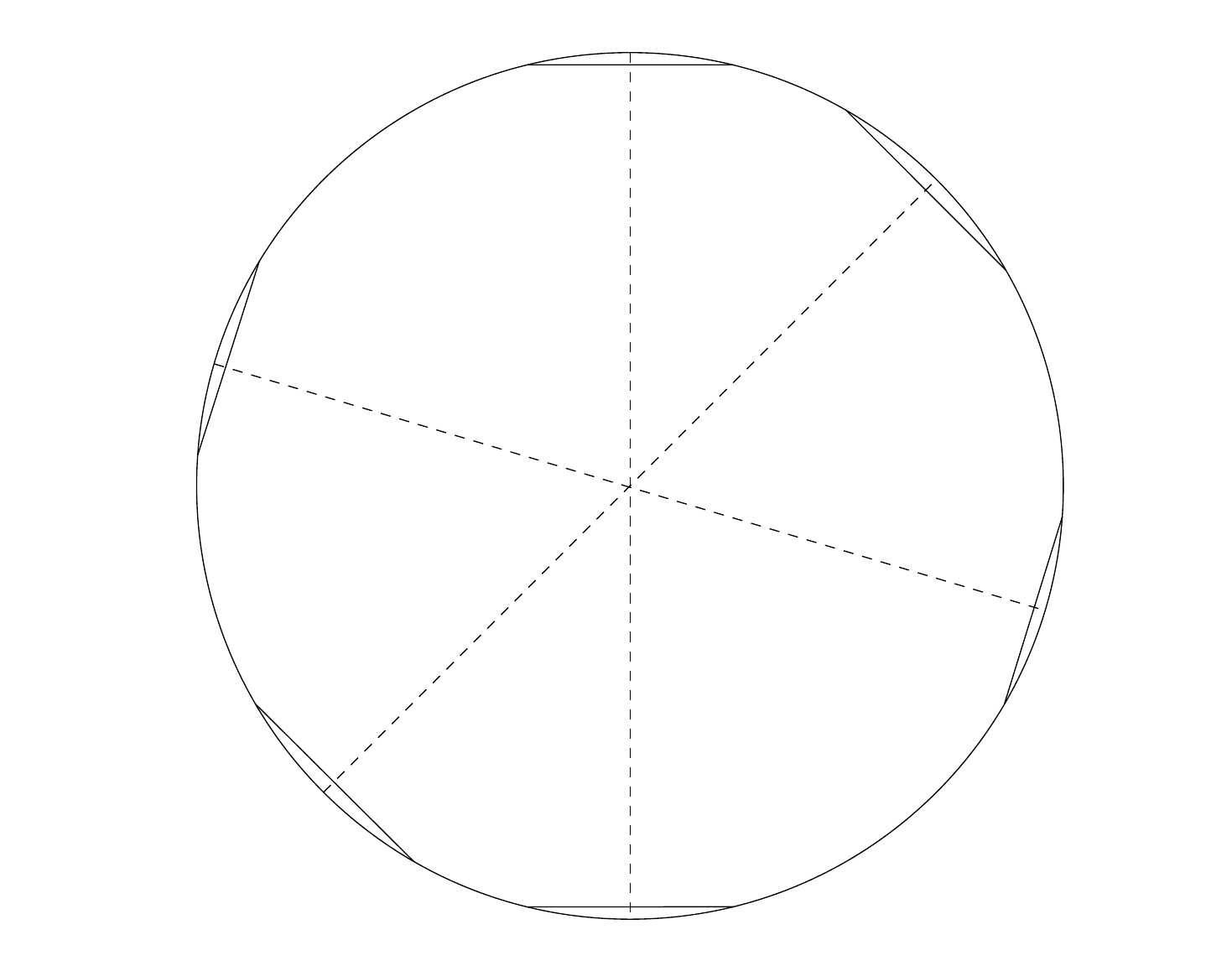} 
	\caption{Set of disjoint circular segments cut off from $B_\rho$ using chords perpendicular to directions $\pi/4$, $\pi/2$ and $7\pi/8$.}
	\label{figBEAM}
\end{figure}
and parallel to the sides of $\textup{Hull}(X)$, and these segments are all either disjoint or contained in each other for different
	rules, since $\rho$ is large. No additional infection takes place in $B_\rho$, therefore 
	$D=B_\rho\setminus\langle \Zdd\setminus B_\rho\rangle_\UU$
	is inwards stable. 
\end{proof}
Now, given $\rho>0$ we denote the discrete ball as \[B'_{\rho}:=\Zdd\cap B_{\rho}.\]
An immediate consequence of the above lemma is the fact that every vertex which is eventually
infected should be within some constant distance from an initially infected vertex.
\begin{coro}\label{dilation}
	If $\Ss(\UU)=S^1$, there exists $\hat \rho>0$ such that, for every $x\in \genA_\UU$,
	\begin{equation}
	  A\cap B'_{\hat \rho}(x)\ne \emptyset.  
	\end{equation}
\end{coro}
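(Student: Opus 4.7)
The plan is to deduce the corollary from Lemma \ref{existeindrop} by a straightforward translation-invariance and monotonicity argument. The intuition is that if $x$ is eventually infected but no initially infected site lies within distance $\hat\rho$ of $x$, then we can place a translate of an inwards stable droplet around $x$ that is entirely disjoint from $A$; such a droplet blocks any infection from reaching $x$, a contradiction.

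More precisely, I would apply Lemma \ref{existeindrop} to obtain an inwards stable rounded droplet $D$ with $0\in D$. Since $D$ is bounded, choose $\hat\rho>0$ large enough that $D\subset B_{\hat\rho}$; as $D\subset \Zdd$, this gives $D\subset B'_{\hat\rho}$. I claim that this $\hat\rho$ works. Arguing by contrapositive, fix $x\in\Zdd$ and suppose $A\cap B'_{\hat\rho}(x)=\emptyset$. The $\UU$-bootstrap process is translation invariant, so the translate $D+x$ is again inwards stable:
\[
\langle \Zdd\setminus (D+x)\rangle_\UU \;=\; \Zdd\setminus (D+x).
\]
Moreover $D+x\subset B'_{\hat\rho}(x)$, so the assumption yields $A\subset \Zdd\setminus (D+x)$.

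Finally, by monotonicity of the closure operator,
\[
\genA_\UU \;\subset\; \langle \Zdd\setminus (D+x)\rangle_\UU \;=\; \Zdd\setminus (D+x).
\]
Since $0\in D$ implies $x\in D+x$, this gives $x\notin \genA_\UU$, completing the contrapositive. I do not anticipate any real obstacle here: once the inwards stable droplet from Lemma \ref{existeindrop} is available, the entire argument reduces to translation invariance of $\UU$ and monotonicity of $\langle\cdot\rangle_\UU$. The constant $\hat\rho$ produced this way depends only on (the diameter of) the fixed droplet $D$, hence only on $\UU$, as required for later applications.
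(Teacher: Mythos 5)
Your proof is correct and matches the paper's argument essentially verbatim: pick an inwards stable droplet $D$ containing $0$, choose $\hat\rho$ so that $D\subset B_{\hat\rho}$, translate by $x$, and use inwards stability plus monotonicity of the closure to conclude that $A$ must meet $x+D\subset B'_{\hat\rho}(x)$. The paper presents this in direct form rather than as a contrapositive, but the content is identical.
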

\begin{proof}
	Let $D$ be an inwards stable droplet with $0\in D$, and $\hat \rho>0$ such that
	$D\subset B_{\hat \rho}$. 
	Given $x\in \genA_\UU$, the translation $x+D$ is also inwards stable and
	$x\in \genA_{\UU}\cap (x+D)$. Thus \[A\cap B_{\hat \rho}(x)\supset A\cap  (x+D)\ne\emptyset.\]
\end{proof}

\begin{defi}[Dilation Radius]
	We define the {\it dilation radius} $\beta:=\beta(\UU)$ to be the smallest radius $\hat \rho\ge 1$
	satisfying the conclusion in Corollary \ref{dilation}. 
\end{defi}
Note that
\begin{equation}
  |B'_{3\beta}|\le 30\beta^2.  
\end{equation}

\subsection{Exponential decay}
We will use a specific collection of finite subtrees of $\Zdd$.
\begin{defi}
	For $n \ge 0$ we let $\mathcal T_{0,n}$ to be the collection of all trees $T\subset\Zdd$ containing the origin $0\in\Zdd$
	and other $n$ vertices (so that $|T|=n+1$). We also define the collection
	of all trees containing $0$ and having at most $n$ vertices ($|T|\le n$) by
	\begin{equation}
	  \mathcal T_{\le n}:= \bigcup_{k=1}^n \mathcal T_{0,k-1}.  
	\end{equation}
\end{defi}
A key ingredient to prove the exponential decay theorem is an upper bound for $|\mathcal T_{\le n}|$. 
The following proposition is a particular case of a beautiful problem in the book
{\it The art of mathematics: Coffee time in Memphis} (see Problem 45 in \cite{Btaofm}).
\begin{prop}\label{nofsubt}
	For every $n\ge 1$ we have
	$|\mathcal T_{0,n}|\le (3e)^n$. As a consequence, $|\mathcal T_{\le n}|\le (3e)^n$.
\end{prop}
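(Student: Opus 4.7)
The plan is to encode each subtree in $\mathcal{T}_{0,n}$ as a rooted plane structure and bound the count via a generating-function identity, in the spirit of the referenced problem.

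Root each $T \in \mathcal{T}_{0,n}$ at $0$. Every non-root vertex $v$ of $T$ has one of its four $\mathbb{Z}^2$-neighbours as parent, so its children form a subset of the remaining three neighbour directions; the origin itself has all four directions available for children. I would therefore work with the ``abstract'' family obtained by allowing at each vertex any subset of the available non-parent directions as children, without requiring that the resulting subtrees be vertex-disjoint in $\mathbb{Z}^2$. Since genuine subtrees of $\mathbb{Z}^2$ form a subfamily of these abstract ones, it suffices to bound the abstract count.

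Let $a_n$ denote the number of abstract subtrees rooted at a vertex with one parent direction fixed and having $n$ edges, and set $f(x) := \sum_{n\ge 0} a_n x^n$. The recursive description---choose a subset $K$ of the three non-parent neighbour directions to be children and attach to each an abstract subtree---yields the functional equation
\[
f(x) = \bigl(1 + x f(x)\bigr)^3.
\]
Analogously, the generating function $g(x)$ for the origin-rooted count (with all four directions available at the root) is $g(x) = (1 + x f(x))^4$. Substituting $u = x f(x)$ transforms the first equation into $x = u/(1+u)^3$, and Lagrange inversion yields the explicit formulae $a_n = \tfrac{1}{n+1}\binom{3n+3}{n}$ and $[x^n]\,g(x) = \tfrac{4}{n}\binom{3n+3}{n-1}$ for every $n \ge 1$.

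By Stirling's formula both quantities are of order $(27/4)^n / n^{3/2}$ up to a constant, so in particular $|\mathcal{T}_{0,n}| \le [x^n]\,g(x) \le C\,(27/4)^n$ for some absolute constant $C$. Since $3e/(27/4) = 12e/27 \approx 1.21 > 1$, the factor $C$ is absorbed for all $n \ge 1$: one verifies the small cases $[x^1]g = 4 \le 3e$, $[x^2]g = 18 \le (3e)^2$ directly, and the asymptotic ratio then handles all larger $n$. The second claim follows by geometric summation:
\[
|\mathcal{T}_{\le n}| = \sum_{k=0}^{n-1}|\mathcal{T}_{0,k}| \le \sum_{k=0}^{n-1}(3e)^k = \tfrac{(3e)^n - 1}{3e - 1} \le (3e)^n,
\]
using $3e - 1 > 1$.

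The main subtlety is recognising that one may freely \emph{over-count} by ignoring the no-self-overlap constraint in $\mathbb{Z}^2$; this frees the argument from any geometric bookkeeping and reduces the problem to the clean Fuss--Catalan-type recursion above. After that, the only real work is the explicit verification that the polynomial loss from Stirling genuinely fits under the exponential slack $(12e/27)^n$, which is a small finite check combined with a routine asymptotic comparison.
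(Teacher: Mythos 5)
Your proof is correct. One remark first: the paper does not actually supply a proof of this proposition — it cites Problem~45 of Bollob\'as's \emph{The Art of Mathematics: Coffee Time in Memphis} — so you are on your own here, and you have produced a valid, self-contained argument.

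The structure is sound: rooting at $0$, passing to ``abstract'' direction-labelled rooted trees (dropping the self-avoidance constraint in $\mathbb{Z}^2$) gives a clean injection, and the functional equations $f = (1+xf)^3$ and $g = (1+xf)^4$ are the correct encodings of ``at each non-root vertex, independently attach or not a subtree in each of the $3$ non-parent directions,'' with $4$ directions at the root. Lagrange inversion then gives $[x^n]g = \tfrac{4}{n}\binom{3n+3}{n-1}$, and your sanity checks $[x^1]g = 4$, $[x^2]g = 18$ are right. The asymptotic $\Theta\bigl((27/4)^n n^{-3/2}\bigr)$ is correct, $27/4 = 6.75 < 3e \approx 8.15$, so the exponential slack absorbs the constant; the geometric-sum step for $|\mathcal{T}_{\le n}|$ is fine because $|\mathcal{T}_{0,0}| = 1 = (3e)^0$ and $3e-1>1$.

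The one place where you wave your hands is ``one verifies the small cases directly, and the asymptotic ratio then handles all larger $n$.'' This is true but you could make it uniform and avoid the explicit Lagrange/Stirling computation entirely: from $g = (1+xf)^4$ and $x = u/(1+u)^3$ one has $[x^n]g = \tfrac{4}{n}\binom{3n+3}{n-1} \le \tfrac{4}{n}\binom{3n+3}{n}$, and the elementary bound $\binom{m}{k}\le (em/k)^k$ gives
\[
\binom{3n+3}{n} \le \Bigl(\tfrac{e(3n+3)}{n}\Bigr)^{n} = (3e)^n\Bigl(1+\tfrac1n\Bigr)^n \le e\,(3e)^n,
\]
so $[x^n]g \le \tfrac{4e}{n}(3e)^n$, which is at most $(3e)^n$ once $n\ge 11$, leaving only finitely many explicit checks — a fully quantitative version of what you describe. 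Alternatively, the Lagrange inversion is not needed at all: the number of abstract rooted trees with $n$ edges in which every vertex has at most $3$ children is at most the $n$-th Fuss--Catalan number, for which the crude $\binom{3n}{n}\le (3e)^n$ estimate already suffices. Your route gives the sharp constant $27/4$, which is more than the statement requires, but it is all correct.
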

Consider $\UU$-bootstrap percolation with initially infected set $A\subset\Zdd$, 
where $\Ss(\UU)=S^1$ and let $\beta$ be the dilation radius.
\begin{lemma}[Extremal lemma for $\mathcal K$]\label{extremaldeK}
	If $|\mathcal K|\ge n$ 
	then, there exists a tree $T\in\mathcal T_{\le n}$ such that
	\begin{equation}
	  |A\cap T|\ge (30\beta^2)^{-1} n. \end{equation}
\end{lemma}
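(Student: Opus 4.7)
My plan is to take $T$ to be a spanning tree of a connected size-$n$ subset $K^*$ of $\mathcal K$ containing the origin, and to show by a pigeonhole argument driven by the dilation radius $\beta$ that $K^*$ already contains at least $n/(30\beta^2)$ initially infected vertices.

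First, since $\mathcal K$ is connected in the graph induced by $\langle A\rangle_\UU$ and $|\mathcal K|\ge n$, I would run a breadth-first search of $\mathcal K$ from $0$ and stop after visiting $n$ vertices; this produces a connected $K^*\subset\mathcal K$ with $0\in K^*$ and $|K^*|=n$. A spanning tree of $K^*$ in the nearest-neighbour lattice graph of $\Zdd$ then lies in $\mathcal T_{0,n-1}\subset\mathcal T_{\le n}$, and I take this as $T$. Only the estimate $|A\cap T|\ge n/(30\beta^2)$ remains.

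For the $A$-count, I would invoke Corollary \ref{dilation} vertex by vertex: every $v\in K^*\subset\langle A\rangle_\UU$ admits a choice $a_v\in A\cap B'_\beta(v)$. The assignment $v\mapsto a_v$ is at most $|B'_\beta|\le|B'_{3\beta}|\le 30\beta^2$-to-one, because $a_v=a$ forces $v\in B'_\beta(a)$; hence the image $A^*:=\{a_v:v\in K^*\}$ satisfies $|A^*|\ge n/(30\beta^2)$. Provided the $a_v$ actually lie in $K^*$, the inclusion $A^*\subset K^*=V(T)$ gives $|A\cap T|\ge|A^*|\ge n/(30\beta^2)$.

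The only place I expect to need care is this last inclusion, i.e.\ reconciling the Euclidean selection $a_v\in B'_\beta(v)$ with the lattice-connectivity defining $\mathcal K$. The natural fix is to modify the BFS so that adding $v$ to $K^*$ also queues up all of $B'_\beta(v)\cap\mathcal K$ (which contains an admissible $a_v$ and is joined to $v$ by short lattice paths of length at most $\sqrt 2\beta$ in $\langle A\rangle_\UU$). This inflates $|K^*|$ by at most a factor of $|B'_\beta|$, but that factor is harmless: one recovers $|T|\le n$ by replacing the cut-off $n$ with $n/|B'_\beta|$, and the constant simply gets absorbed into the ratio $1/(30\beta^2)$ in the conclusion. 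Apart from this bookkeeping, the lemma reduces to one application of Corollary \ref{dilation} and the elementary volume bound $|B'_{3\beta}|\le 30\beta^2$, and it is then ready to be plugged into the standard Bollob\'as--Riordan scheme via Proposition \ref{nofsubt} and a union bound to yield Theorem \ref{expdecIntro2}.
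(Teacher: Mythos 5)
The proposal has a genuine gap, and the fix you sketch does not close it. Corollary \ref{dilation} gives $a_v\in A\cap B'_\beta(v)$ for every $v\in\mathcal K\subset\genA_\UU$, but nothing forces $a_v$ to lie in the \emph{same connected component} of $\genA_\UU$ as $v$. For a subcritical family the closure $\genA_\UU$ typically consists of many small islands, and the initially infected site near $v$ may sit on a different island. Consequently $a_v\notin\mathcal K$ in general, so $B'_\beta(v)\cap\mathcal K$ need not contain any admissible $a_v$, and the assertion that $a_v$ is ``joined to $v$ by short lattice paths\dots in $\genA_\UU$'' is false. This is not a bookkeeping issue: the inclusion $A^*\subset K^*$ you flag as ``the only place to need care'' simply fails, and your BFS, which by design explores only $\mathcal K$, can never recover the missing $a_v$'s.

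The idea you are missing is that $\mathcal T_{\le n}$ consists of \emph{arbitrary} lattice subtrees of $\Zdd$ containing $0$; the tree $T$ is not required to lie inside $\mathcal K$, nor even inside $\genA_\UU$. The paper's proof exploits this freedom. It greedily selects $x_1=0,x_2,\dots,x_n\in\mathcal K$, where each $x_{i+1}$ is chosen from $\mathcal K\setminus K_i$ at lattice distance $1$ from the hull $K_i:=\bigcup_{j\le i}B'_{3\beta}(x_j)$ (possible as long as $|\mathcal K|>|K_i|$, which is where the hypothesis $|\mathcal K|\ge 30\beta^2 n$ enters via $|B'_{3\beta}|\le 30\beta^2$). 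Since $x_{i+1}\notin K_i\supset B'_{3\beta}(x_j)$ for $j\le i$, the centers are pairwise more than $3\beta$ apart, so the inner balls $B'_\beta(x_i)$ are disjoint and the sites $a_{x_i}\in A\cap B'_\beta(x_i)$ are distinct; at the same time, every $a_{x_i}$ lies inside the hull $K_n$, which is connected because each new ball of radius $3\beta\ge 3$ meets the old hull at $x_{i+1}$'s lattice neighbour. One then takes $T$ to be a spanning tree of $K_n$ — a tree in $\Zdd$ that genuinely strays outside $\mathcal K$. If you want to salvage a BFS-flavoured variant, you could instead set $K^{**}:=\bigcup_{v\in K^*}B'_\beta(v)$ with the full Euclidean balls (not intersected with $\mathcal K$), which does contain every $a_v$; the ``$|B'_\beta|$-to-one'' count you already have then yields the lemma, at the cost of a weaker constant of order $\beta^4$ rather than $\beta^2$, because you pay for ball size both in $|T|$ and in the multiplicity of $v\mapsto a_v$. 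The paper's well-separated selection avoids that double payment.
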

\begin{proof}
In fact, let us suppose that $|\mathcal K|\ge 30\beta^2 n$,
and recursively find $n$ distinct vertices $x_1',\dots,x_n'\in A\cap T$, for
some tree $T\in\mathcal T_{\le 30\beta^2 n}$.

By definition of $\beta$, for $x_1=0\in\genA_\UU$ there exists $x_1'\in A\cap B'_\beta(x_1)$.
Then set $K_1= B'_{3\beta}(x_1)$, and since $|K_1|\le 30\beta^2$
we can find a vertex $x_2\in\mathcal K\setminus K_1$, 
which is at distance 1 from $K_1$;
now we apply Corollary \ref{dilation} to $x_2\in\genA_\UU$ and find a new vertex 
$x_2'\in A\cap B'_{\beta}(x_2)$.
Proceed in this way, for $i\le n$, assume we have found
vertex $x_i'\in A\cap B'_{\beta}(x_{i-1})$, then set
\[K_i=B'_{3\beta}(x_{i})\cup K_{i-1}.\]
Since $|K_i|\le 30\beta^2i$, for $i=1,\dots,n-1$ we have
\[|\mathcal K\setminus K_i|\ge 30\beta^2n-30\beta^2i\ge 1,\] so we can find a vertex $x_{i+1}\in\mathcal K\setminus K_i$,
which is at distance 1 from $K_i$. 
Observe that at step $n-1$ we still have $|\mathcal K\setminus K_{n-1}|\ge 30\beta^2\ge 1$, so for $x_n\in\mathcal K\setminus K_{n-1}$ we can apply
the corollary one more time to get our last vertex $x_n'\in A$.
For $i=1,\dots,n$, the vertices $x_i'$ are all distinct because all balls
$B'_\beta(x_i)$ are pairwise disjoint by construction.

Finally, consider a spanning tree $T$ of $K_n$, and note that $x_i,x_i'\in T$ for all
$i=1,\dots,n$. In particular, $|A\cap T|\ge n$, and the fact that 
$T\in\mathcal T_{\le 30\beta^2 n}$
follows from $0=x_1\in T$ and $|T|\le|K_n|\le 30\beta^2 n$.
\end{proof}

The same proof allows us to deduce another similar extremal lemma.
\begin{lemma}\label{stuck}
	There exists a constant $\lambda\in(0,30\beta^2]$ such that, if $\genA_{\UU}$ is connected then,
\begin{equation}
    |\genA_{\UU}|\le \lambda |A|.
\end{equation}
\end{lemma}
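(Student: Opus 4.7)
The plan is to mimic the recursive greedy construction used in the proof of Lemma \ref{extremaldeK}, but applied to the entire closure $\langle A\rangle_\UU$ rather than to the cluster of the origin. The connectedness hypothesis will play the role that was played there by working inside a single component: it gives me just enough room to extend the walk as long as any vertex of $\langle A\rangle_\UU$ remains uncovered.

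Explicitly, I will pick an arbitrary starting vertex $x_1\in \langle A\rangle_\UU$, invoke Corollary \ref{dilation} to select $x_1'\in A\cap B'_\beta(x_1)$, and initialize $K_1:=B'_{3\beta}(x_1)$. Recursively, as long as $\langle A\rangle_\UU\not\subseteq K_i$, the connectedness of $\langle A\rangle_\UU$, together with the fact that $x_1\in K_i\cap\langle A\rangle_\UU$, forces the existence of a vertex $x_{i+1}\in\langle A\rangle_\UU\setminus K_i$ at distance $1$ from $K_i$. I then apply Corollary \ref{dilation} again to choose $x_{i+1}'\in A\cap B'_\beta(x_{i+1})$ and set $K_{i+1}:=K_i\cup B'_{3\beta}(x_{i+1})$.

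Exactly as in Lemma \ref{extremaldeK}, the geometric separation $x_{i+1}\notin B'_{3\beta}(x_j)$ for all $j\le i$ ensures $\|x_{i+1}'-x_j'\|>\beta$, so the selected vertices $x_1',\dots,x_n'\in A$ are all distinct. Since $|K_{i+1}|\le |K_i|+|B'_{3\beta}|\le |K_i|+30\beta^2$, the procedure terminates at some finite step $n$ with $\langle A\rangle_\UU\subseteq K_n$, whence
\[
|\langle A\rangle_\UU|\;\le\;|K_n|\;\le\;30\beta^2\,n\;\le\;30\beta^2\,|A|,
\]
because $|A|\ge n$. Taking $\lambda:=30\beta^2$ (or any smaller upper bound obtained by a sharper volume estimate) yields the claim. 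The only genuinely nontrivial point is the disjointness argument for the $x_i'$, and this is the same observation that already appeared in the proof of Lemma \ref{extremaldeK}; no new obstacle is encountered here, the construction merely substitutes the connected set $\langle A\rangle_\UU$ in place of the cluster $\mathcal K$.
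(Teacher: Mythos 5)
Your argument is essentially the paper's proof: the paper simply remarks that the recursive ball-covering construction used for Lemma \ref{extremaldeK} applies verbatim with the connected set $\genA_{\UU}$ in place of the cluster $\mathcal K$, yielding the same constant $30\beta^2$, and you have reproduced that construction. One small slip in the write-up: the cited reason for termination --- the volume growth bound $|K_{i+1}|\le |K_i|+30\beta^2$ --- does not by itself force the procedure to stop. Termination actually follows from the distinctness of the selected vertices $x_1',\dots,x_n'\in A$ (which you do establish): when $A$ is finite the procedure cannot run for more than $|A|$ steps, and since a step is always available whenever $\genA_{\UU}\not\subseteq K_i$, the process must halt with $\genA_{\UU}\subseteq K_n$ for some $n\le|A|$; when $A$ is infinite the claimed inequality is vacuous. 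The paper handles this explicitly by first observing that $\pm e_1,\pm e_2\in\Ss$ traps $\genA_{\UU}$ inside a finite rectangle, and then quoting the extremal lemma's proof by reference; your version is equivalent once the termination reasoning is corrected.
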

\begin{proof}
	If $A$ is infinite we have nothing to show. Assume $A$ is finite, then it is contained in a big
	rectangle $R\subset\Zdd$, since $\pm e_1,\pm e_2\in\Ss$, so $\genA_\UU\subset R$ is also finite.
	Since $\genA_{\UU}$ is connected, the above proof shows that $|\genA_\UU|>30\beta^2 n$ implies $|A|>n$.
	In other words, $|A|=n$ implies $|\genA_\UU|\le 30\beta^2 n= 30\beta^2 |A|$.
\end{proof}

The following is a quantitative reformulation of Theorem \ref{expdecIntro2}, whose proof is inspired by lines through the book {\it Percolation} of Bollob\'as and Riordan (see pp. 70 in \cite{ByR}).

\begin{theor}[Exponential decay for the cluster size]\label{expdecay}
Consider subcritical $\UU$-bootstrap percolation on $\Zdd$ with $\Ss(\UU)=S^1$ and let $\beta\ge 1$ be the dilation radius.
If $0<\varepsilon <e^{-150\beta^2}$ and $C=C(\varepsilon):=-\frac{1}{60\beta^2}\log(\varepsilon)$, then
	\begin{equation}
	\p_\varepsilon(|\mathcal K|\ge n)\le 
 \varepsilon^{\frac{1}{60\beta^2}n}=e^{-Cn},
	\end{equation}
	for every $n\in\mathbb N$.
\end{theor}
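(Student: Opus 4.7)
The plan is to deduce exponential decay by a clean union-bound argument that combines the two ingredients already assembled in this subsection: the extremal lemma for $\mathcal K$ (Lemma~\ref{extremaldeK}) and the tree-counting estimate $|\mathcal T_{\le n}|\le(3e)^n$ (Proposition~\ref{nofsubt}). The overall structure mirrors the Bollob\'as--Riordan approach to subcritical decay in classical Bernoulli percolation, which the excerpt already signals as the inspiration: show that any large cluster must live on some small subtree of $\Zdd$ carrying many initially infected sites, and then bound the probability of \emph{that} event by summing over all candidate subtrees.

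Fix $n\in\N$ and set $m:=\lceil n/(30\beta^2)\rceil$. The first step is simply to translate Lemma~\ref{extremaldeK}: the event $\{|\mathcal K|\ge n\}$ is contained in the event that some $T\in\mathcal T_{\le n}$ satisfies $|A\cap T|\ge m$. For each \emph{deterministic} tree $T\subset\Zdd$ with $|T|\le n$, independence of the Bernoulli$(\varepsilon)$ variables along $T$ yields
\begin{equation*}
\p_\varepsilon(|A\cap T|\ge m)\;\le\;\binom{|T|}{m}\varepsilon^m\;\le\;\Bigl(\frac{e|T|}{m}\Bigr)^m\varepsilon^m\;\le\;(30e\beta^2\,\varepsilon)^m,
\end{equation*}
where the last inequality uses $|T|\le n$ together with $m\ge n/(30\beta^2)$. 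Taking a union bound over trees and invoking Proposition~\ref{nofsubt} then gives
\begin{equation*}
\p_\varepsilon(|\mathcal K|\ge n)\;\le\;|\mathcal T_{\le n}|\cdot(30e\beta^2\,\varepsilon)^m\;\le\;(3e)^n\,(30e\beta^2\,\varepsilon)^m.
\end{equation*}

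The final step is bookkeeping of constants. Taking logarithms and using $m\ge n/(30\beta^2)$, the desired inequality $(3e)^n(30e\beta^2\varepsilon)^m\le\varepsilon^{n/(60\beta^2)}$ reduces, after dividing by $n$, to the elementary estimate $60\beta^2\log(3e)+2\log(30e\beta^2)\le-\log\varepsilon$, which is comfortable for every $\beta\ge 1$ as soon as $-\log\varepsilon\ge 150\beta^2$. I do not foresee any deep obstacle beyond this bookkeeping: the conceptual content lives entirely in the two lemmas we invoke, and what remains is the standard union bound. The only genuinely delicate point is tracking exactly how the dilation radius $\beta$ propagates: the factor $30\beta^2$ introduced by the extremal lemma must be paid for in the smallness of $\varepsilon$, which is precisely why $\varepsilon$ has to be taken exponentially small in $\beta^2$.
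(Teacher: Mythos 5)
Your proof is correct and follows essentially the same route as the paper: Lemma~\ref{extremaldeK} reduces the event to a union over subtrees, Proposition~\ref{nofsubt} bounds the number of subtrees, and a binomial tail estimate plus bookkeeping yields the exponential decay with exactly the same final inequality $60\beta^2\log(3e)+2\log(30e\beta^2)\le-\log\varepsilon$. The only cosmetic difference is that you take $m=\lceil n/(30\beta^2)\rceil$ explicitly where the paper writes $\delta n$ informally, which is a slightly cleaner way to handle the integrality.
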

\begin{proof}
	By Lemma \ref{extremaldeK} and Proposition \ref{nofsubt}, with
	$\delta=(30\beta^2)^{-1}$, we obtain
	\begin{align*}
	\p_\varepsilon(|\mathcal K|\ge n) &
	\le  \p_\varepsilon\Bigg(\bigcup_{T\in\mathcal T_{\le n}} \{|A\cap T|\ge \delta n\}\Bigg)
 \le \sum_{T\in\mathcal T_{\le n}}\p_\varepsilon(|A\cap T|\ge \delta n) 
 \le \sum_{T\in\mathcal T_{\le n}}
	{{n}\choose{\delta n}}\varepsilon^{\delta n}    \\
& \le \sum_{T\in\mathcal T_{\le n}}  (e\delta^{-1}\varepsilon)^{\delta n}  
\le ([3e][e\delta^{-1} \varepsilon]^{\delta})^{n}   
\le e^{-Cn},
	\end{align*}
	and we are done.
\end{proof}


\section{Lower bounds via beams process}\label{SectionLowerBeams}\label{SectionLowerBeam}   
To deal with the cases $c\ge a+b$ we introduce a new tool which we 
call the {\it beams process}. This time, 
instead of covering the infected vertices step by step with components,
we cover them with beams, so that when we observe this induced process along the 
$e_3$-direction it looks like subcritical two-dimensional bootstrap percolation.

Consider the family $\mathcal N_{m}^{a,b}$ 
given by the collection of all subsets of size $m$ of
\begin{equation}\label{Nab0}
 N_{a,b}=\{a'e_1:\pm a'\in[a]\}\cup \{b'e_2:\pm b'\in[b]\}.
\end{equation}
Observe that $\Ss(\mathcal N_{m}^{a,b})=S^1$   if and only if $m\ge a+b+1$,
in particular, our exponential decay result (Theorem \ref{expdecay}) holds for these families.
From now on we set
\begin{equation}
m:=a+b+1. 
\end{equation}
\begin{defi}\label{beam}
A {\it beam} is a finite subset of $\Zdt$ of the form $H\times[w]$, where $H\subset\Zdd$ is 
 connected and $\langle H\rangle_{\mathcal N_{m}^{a,b}}=H$.
\end{defi}


It will be important for us to have an upper bound on the number of beams of a given size,
which are contained in $[L]^3$.
The following lemma is another consequence of Proposition \ref{nofsubt}.
\begin{lemma}[Counting beams]\label{nofbeams}
	Let $\mathcal B_{n_1,n_2}$ be the collection of all copies of the beam $H\times[w]$ contained in $[L]^3$ satisfying $w\le n_1$ and $|H|\le n_2$.
	Then
	\[|\mathcal B_{n_1,n_2}|\le n_1L^3(3e)^{n_2}.\]
\end{lemma}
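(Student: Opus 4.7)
The plan is to parametrize each beam by three pieces of data --- its height $w$, its location in $[L]^3$, and its 2D cross-section $H$ --- and then to bound the number of choices for each piece separately. First I would fix a canonical \emph{root} for every beam $B=H\times[w]\subset [L]^3$, say the lexicographically smallest vertex $v=(v_1,v_2,v_3)\in B$. Then $B$ is uniquely recovered from the triple $(w,\,v,\,H-(v_1,v_2))$, where $H-(v_1,v_2)\subset\Zdd$ is a translate of $H$ containing the origin.

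Next I would bound each factor. The height satisfies $w\le n_1$, giving at most $n_1$ choices. The root $v$ lies in $[L]^3$, giving at most $L^3$ choices. For the 2D shape it remains to count connected subsets $H'\subset\Zdd$ containing $0$ with $|H'|\le n_2$. Each such $H'$ admits at least one spanning tree $T$ with $0\in T$ and $|T|=|H'|\le n_2$; since $V(T)=H'$, fixing any canonical choice of spanning tree (for instance the breadth-first tree rooted at $0$ with lexicographic tie-breaking) makes the assignment $H'\mapsto T$ injective into $\mathcal T_{\le n_2}$. By Proposition~\ref{nofsubt} there are therefore at most $(3e)^{n_2}$ such shapes.

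Multiplying the three bounds gives
\[
|\mathcal B_{n_1,n_2}| \;\le\; n_1\cdot L^3\cdot (3e)^{n_2},
\]
as desired. There is no serious obstacle here: the only point requiring a moment's care is the injectivity of the shape-to-tree assignment, but once a canonical spanning tree rule is fixed this is immediate, because the vertex set of the resulting tree already recovers $H'$.
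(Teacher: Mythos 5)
Your proof is correct and takes essentially the same approach as the paper: both parametrize a beam by a location factor (of size $L^3$), a height factor (of size $n_1$), and a translation-equivalence class of the cross-section $H$, the last of which is controlled by associating a spanning tree and invoking Proposition~\ref{nofsubt}. The only difference is bookkeeping: the paper bounds the number of segments by $n_1L$ and the number of connected $H\subset[L]^2$ by a double-counting argument giving $L^2(3e)^{n_2}$, whereas you package the position of both factors into a single root $v\in[L]^3$ and then count anchored shapes $H'\ni 0$ directly; the resulting bound is identical.
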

\begin{proof}
The number of segments inside $[L]$ 	with at most $n_1$ vertices, is at most $n_1L$.

	Now we give an upper bound for the number of $H$'s. Let $\mathcal H_h$ denote the collection of all
	connected sets $H\subset[L]^2$ such that $|H|=h$, 
	so we can write \[h|\mathcal H_h|=\sum_{H\in\mathcal H_h}|H|=\sum_{x\in[L]^2}\sum_{H\in\mathcal H_h}
	\mathds 1\{x\in H\}= \sum_{x\in[L]^2} \textup{cs}(x),\]
	where $\textup{cs}(x)$ is the number of connected subsets of $[L]^2$ with size $h+1$, containing a fixed point $x$.
	To each of such sets we can associate an spanning tree in an injective fashion, so by Proposition
	\ref{nofsubt}, $|\mathcal H_h|\le L^2(3e)^{h-1}$. It follows that the number of $H$'s is at most
	\[\sum_{h=1}^{n_2} |\mathcal H_h|  \le L^2\sum_{h=1}^{n_2} (3e)^{h-1} \le L^2(3e)^{n_2}.\]
\end{proof}

\subsection{The beams process}
\begin{defi}\label{genbeam}
Given finite connected sets $S_1,S_2\subset\Zdt$, we say that a beam $H\times[w]$ is {\it generated by} $(S_1,S_2)$ if it can be constructed in the following way: by translating $S_1\cup S_2$ if necessary, we can assume that the smallest rectangle containing it is $R\times[w]$, 
 then consider the connected sets $H_1,H_2\subset \Zdd$ given by
 \[H_t:=\{x\in R: (\{x\}\times[w])\cap S_t\ne\emptyset\},\ \ \ t=1,2.\]    
If $\langle H_1\cup H_2\rangle_{\mathcal N_{m}^{a,b}}$ is connected then we take $H:=\langle H_1\cup H_2\rangle_{\mathcal N_{m}^{a,b}}$.
Otherwise, we let $P\subset R$ be any path with minimal size connecting $H_1$ to $H_2$ and then set $H:=\langle H_1\cup H_2\cup P\rangle_{\mathcal N_{m}^{a,b}}$.
\end{defi}
In this definition $\langle S_1\cup S_2\rangle\subset H\times[w]$ for each $r\ge m$, and generated beams could depend on the choice of the path $P$. However, such minimal paths are not relevant for our purposes. 
\begin{nota}
 We will denote any fixed beam generated by $(S_1,S_2)$ as $B(S_1\cup S_2)$, regardless the choice of $P$.
\end{nota}
We want to track the process of infection by covering all possible infected
sites with beams, we do that step by step in order to get some control over the sizes.
The following algorithm is a variation of the components process.
We will use it to show an Aizenman-Lebowitz-type lemma which says that when $[L]^3$
is internally filled, then it contains {\it covered} beams of all intermediate sizes (see Lemma \ref{ALbeams} below).
\begin{defi}[The beams process]\label{beampro}
 Let $A=\{x_1,\dots,x_{|A|}\}\subset [L]^3$ and fix $r\ge c+1$.
 Set $\mathcal B:=\{S_1,\dots,S_{|A|}\}$, where $S_i=\{x_i\}$ for each $i=1,\dots,|A|$, and
repeat until STOP:
 \begin{enumerate}
  \item If there exist distinct beams $S_{1},S_{2}\in\mathcal B$ such that
  \[S_{1}\cup S_{2}\]
  is strongly connected, then remove it from $\mathcal B$,
  and replace by $B(S_{1}\cup S_{2})$. 
  \item If there do not exist such a family of sets in $\mathcal B$, then STOP.
 \end{enumerate}
We call any beam $S=B(S_{1}\cup S_{2})\subset [L]^3$ added to the collection $\mathcal B$
a {\it covered beam}, and denote the event that $S$ is covered by
$I^{\text{\ding{86}}}(S).$
\end{defi}
Again, there are two properties that are due to the way the algorithm evolves:
\begin{itemize}
 \item  Any covered beam $S$ satisfies $\langle A\cap S\rangle\subset \langle S\rangle=S$.
 \item 
 The process stops in finite time, thus, we can consider the final collection $\mathcal B'$ and set  $V(\mathcal B'):=\bigcup\limits_{S\in\mathcal B'}S$. 
By using the same arguments in the proof of Lemma \ref{final=gen}, it follows that $\genA\subset V(\mathcal B')$. 
\end{itemize}


\subsection{Case $c=a+b$}\label{sectionbeam}
In this section we prove the following.
\begin{prop}\label{lower3}
Under $\mathcal N_{m}^{a,b,a+b}$-bootstrap percolation, there is a constant $\gamma=\gamma(a,b)>0$
such that, if 
\[L<\exp(\gamma p^{-1}),\]
then 
$\p_p[I^{\bullet}([L]^3)]\to 0,\ as\ p\to 0.$
\end{prop}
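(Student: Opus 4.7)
The plan is to run the beams process (Definition \ref{beampro}) and exploit the fact that, because the 3D threshold is $m = a+b+1$, its $e_3$-shadow is governed by the \emph{subcritical} 2D family $\mathcal N_{m}^{a,b}$, whose stable set is all of $S^1$; in particular, Theorem \ref{expdecIntro} applies to the projected process.

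First I would prove an Aizenman-Lebowitz-type lemma for covered beams. By Lemma \ref{stuck} applied to the 2D subcritical family $\mathcal N_{m}^{a,b}$, the closure produced by a single merge in Definition \ref{genbeam} satisfies $|\langle H_1\cup H_2 \cup P\rangle_{\mathcal N_m^{a,b}}|\le \lambda(|H_1|+|H_2|+|P|)$, with $|P|$ bounded by the 2D strong-connectivity distance $\le b$; similarly the new width satisfies $w\le w_1+w_2+c$. Therefore, if $[L]^3$ is internally filled, then for every scale $h\le L^2$ the first covered beam $B^{*} = H^{*}\times [w^{*}]$ with $|H^{*}|\ge h$ has $|H^{*}|\in [h, C_1 h]$ for some constant $C_1 = C_1(a,b)$, and one can arrange that $w^{*}\le w_0$ for a corresponding scale $w_0$ (modulo a separate case analysis for ``thin-tall'' beams that grew along $e_3$ without growing in base area).

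Next, for a fixed beam $B = H\times [w]$ with $|H|=h$ and $w\le w_0 := \varepsilon_0/p$, I would bound $\mathbb P_p(I^{\text{\ding{86}}}(B))$ by noting that the projection $A' := \{y\in H : \exists z\in [w],\ (y,z)\in A\}$ is Bernoulli-$q$ in $H$ with $q = 1-(1-p)^{w}\le pw\le \varepsilon_0$, and that $I^{\text{\ding{86}}}(B)$ forces $H\subset \langle A'\cup \mathcal P\rangle_{\mathcal N_m^{a,b}}$ for a bounded-size union $\mathcal P$ of algorithmic connecting paths. Lemma \ref{extremaldeK} then shows $|A'|\ge h/C'$ for some constant $C' = C'(a,b)$, and choosing $\varepsilon_0 < e^{-150\beta^2}$ small enough that $C(\varepsilon_0) > C_1\log(3e)$ in Theorem \ref{expdecay} gives $\mathbb P_p(I^{\text{\ding{86}}}(H\times [w]))\le h\, e^{-C(\varepsilon_0)\, h}$.

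Finally, with $h := K/p$ for $K$ large, Lemma \ref{nofbeams} bounds the number of candidate beams by $w_0 L^3 (3e)^{C_1 h}$, so
\[
\mathbb P_p\bigl(I^{\bullet}([L]^3)\bigr)\le w_0 L^3 (3e)^{C_1 h}\cdot h\, e^{-C(\varepsilon_0) h} = \exp\!\Big(3\log L + O(\log \tfrac{1}{p}) + (C_1\log(3e) - C(\varepsilon_0))\tfrac{K}{p}\Big),
\]
which tends to $0$ as $p\to 0$ once $\gamma$ satisfies $3\gamma < (C(\varepsilon_0) - C_1\log(3e))K$. The main obstacle will be the first step: simultaneously controlling $|H^{*}|$ and $w^{*}$ in the AL lemma, since the beams process evolves these two quantities independently (a chain of $e_3$-merges can grow $w$ without affecting $|H|$), so either an auxiliary geometric argument ruling out thin-tall covered beams or a refined traversal of the merge tree will be needed.
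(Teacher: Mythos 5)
Your overall approach -- beams process, coupling of the $e_3$-shadow with subcritical 2D $\mathcal N_{m}^{a,b}$-bootstrap percolation, exponential decay via Theorem \ref{expdecay}, and union bound via Lemma \ref{nofbeams} -- is the paper's approach. But the ``main obstacle'' you flag at the end is a genuine gap, and the rest of your argument does not close it. You define $B^{*}$ as the first covered beam with $|H^{*}|\ge h$, and then estimate $\p_p(I^{\text{\ding{86}}}(B))$ solely through the projection $A'$ and the 2D cluster-size bound. This estimate does nothing in the regime where $|H|$ stays $O(1)$ while $w$ grows, and there is no a priori control on $w^{*}$ at the step you choose: a long chain of $e_3$-only merges can produce arbitrarily tall beams with tiny base before any base ever reaches size $h$.

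The paper's Lemma \ref{ALbeams} resolves this by taking the \emph{first} covered beam at which \emph{either} $w\ge k$ \emph{or} $|H|\ge k$; one merge step can only blow up either quantity by a bounded factor (roughly doubling plus an $O(1)$ additive term from strong connectivity and from $\lambda$ in Lemma \ref{stuck}), so at that step \emph{both} satisfy $w,|H|\le 3\lambda k$. The proof then splits into two cases. Your projection argument handles $|H|\ge k$. The missing case $w\ge k$, $|H|\le 3\lambda k$ is handled by a separate and elementary slab argument: a covered beam cannot contain a gap of width $r$ along $e_3$, so every one of the $w/r$ disjoint slabs $H\times\{ir+1,\dots,(i+1)r\}$ must meet $A$. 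Each such slab has $r|H|=O(\varepsilon/p)$ sites, so the probability that all $w/r\ge \Omega(1/p)$ slabs meet $A$ is at most $\bigl(1-e^{-\Omega(p|H|)}\bigr)^{w/r}\le \bigl(1-e^{-O(\varepsilon)}\bigr)^{\Omega(1/p)}\le e^{-c_1/p}$, which beats the $L^3(3e)^{O(1/p)}$ entropy. Without this second case your union bound is incomplete, and the proof does not go through as written.
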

The beams process and Lemma \ref{stuck} allow us to prove a beams version of the Aizenman-Lebowitz Lemma for this case.
Let $\lambda>0$ be the constant in Lemma \ref{stuck}
associated
to the subcritical two-dimensional family $\mathcal N_{m}^{a,b}$.
\begin{lemma}\label{ALbeams}
	Consider $\mathcal N_{r}^{a,b,c}$-bootstrap percolation with $r\ge c+1$.
 If $[L]^3$ is internally filled, then there is a constant $\kappa\in\mathbb N$ such that 
 for every $k=\kappa,\dots, L$, there exists a covered beam $H\times[w]$ satisfying $w,|H|\le 3\lambda k$, and either $w\ge k$ or $|H|\ge k$.
\end{lemma}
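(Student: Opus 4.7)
The plan is to mimic the classical Aizenman--Lebowitz argument, tracking the \emph{first} covered beam in the beams process whose base or height exceeds $k$. I first observe that the terminal collection $\mathcal{B}'$ contains a beam covering $[L]^3$: for any two points $x, y \in [L]^3 \subset V(\mathcal{B}')$ one can connect them in $[L]^3$ by a strongly connected path, and along such a path consecutive sites must lie in the same beam of $\mathcal{B}'$ (otherwise two distinct beams of $\mathcal{B}'$ would be strongly connected, violating the stopping rule). Hence the beams of $\mathcal{B}'$ meeting $[L]^3$ reduce to a single beam $S^{*}$ with $[L]^3 \subseteq S^{*}$, which has height $\ge L$ and base of size $\ge L^{2}$. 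In particular, for each $k \le L$ there is a first moment during the process at which a covered beam $S = H \times [w]$ with $\max(w, |H|) \ge k$ is produced.

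Write this first $S$ as $S = B(S_{1} \cup S_{2})$ with $S_{t} = H_{t} \times I_{t}$ for $t = 1, 2$ (an initial singleton $\{v\}$ from $A$ counted as a trivial beam). By minimality of $S$ both $w_{t} < k$ and $|H_{t}| < k$. Strong connectedness of $S_{1} \cup S_{2}$ provides points $p_{1} \in S_{1}$ and $p_{2} \in S_{2}$ with $p_{2} - p_{1} \in \bar N_{a,b,c}$, so their $e_{3}$-coordinates differ by at most $c$ and their projections to $\Zdd$ differ by at most $a$ in $e_{1}$ and at most $b$ in $e_{2}$. The $e_{3}$-interval $[w]$ of $S$ is the minimal interval containing $I_{1} \cup I_{2}$, hence
\[ w \le w_{1} + w_{2} + c \le 2k + c. \]
The projections of $p_{1}, p_{2}$ to $\Zdd$ lie in $H_{1}, H_{2}$ respectively at $\ell_{1}$-distance $\le a + b$, so a minimal connecting path $P$ as in Definition \ref{genbeam} satisfies $|P| \le a + b$. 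Since $\mathcal N_{m}^{a,b}$ is subcritical with $\Ss(\mathcal N_{m}^{a,b}) = S^{1}$, Lemma \ref{stuck} applied to the connected input $H_{1} \cup H_{2} \cup P$ gives
\[ |H| = |\langle H_{1} \cup H_{2} \cup P \rangle_{\mathcal N_{m}^{a,b}}| \le \lambda\,|H_{1} \cup H_{2} \cup P| \le \lambda(2k + a + b). \]

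Setting $\kappa := a + b + c$ and noting that $\lambda = 30\beta^{2} \ge 1$ is an admissible choice in Lemma \ref{stuck}, both estimates yield $w, |H| \le 3\lambda k$ for every $k \ge \kappa$, proving the lemma. The conceptual core of the argument, and the reason the subcritical machinery of Section \ref{SectionexpdecayK} was developed, is precisely the invocation of Lemma \ref{stuck}: without an a priori linear bound on $|\langle H_{1} \cup H_{2} \cup P\rangle_{\mathcal N_{m}^{a,b}}|$ in terms of its input, a single merge step in the beams process could inflate the base $H$ in an uncontrolled way, and the induction on $k$ would break down. The rest is combinatorial bookkeeping on how close strongly connected beams can be in each coordinate direction.
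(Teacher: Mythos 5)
Your proposal is correct and follows essentially the same route as the paper: locate the \emph{first} covered beam $S = B(S_1 \cup S_2)$ whose height or base exceeds $k$, bound $w$ via strong connectedness of $S_1 \cup S_2$, and bound $|H|$ via the extremal Lemma \ref{stuck} applied to $\langle H_1 \cup H_2 \cup P\rangle_{\mathcal N_m^{a,b}}$. The one place you are a bit more careful than the paper is in justifying that a beam with $w \ge k$ or $|H| \ge k$ must appear at all: the paper asserts this from $V(\mathcal B') = [L]^3$, while you correctly observe that the stopping rule forces the terminal collection meeting $[L]^3$ to collapse to a single beam containing $[L]^3$, which is the cleaner way to see it. Your bound $w \le w_1 + w_2 + c$ differs slightly in form from the paper's $w \le 2\max_t w_t + r$, but both are absorbed into $3\lambda k$ once $k \ge \kappa$.
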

\begin{proof}
	Let $S=H\times[w]$ be the first beam that appears in the beam process satisfying either $w\ge k$ or $|H|\ge k$
	(such a set exists since $V(\mathcal B')=[L]^3$).
	Then, it is enough to show that $w\le rk$ and $|H|\le 3\lambda k$.
	
	We know that $S=B(S_{1}\cup S_{2})$ for some beams $S_{t}=H_t\times[w_t]$ such that $S_{1}\cup S_{2}$ is strongly connected. Moreover,
	by definition of $S$, $w_t\le k-1$ for $t=1,2$, so
	\begin{equation}\label{maxdiam}
	w\le 2 \max_{t=1,2}\{w_t\} +r \le r(k-1)+r\le rk.
	\end{equation}
	Analogously, $|H_t|\le k-1$, and we know that $H= \langle H_1\cup H_2\cup P\rangle_{\mathcal N_{m}^{a,b}}$ for some path $P$ with bounded (or zero) size, $H$ is connected, so by Lemma \ref{stuck},
	\begin{equation}\label{maxdiam}
	|H|\le \lambda\cdot 2 \max_{t=1,2}\{|H_t|\} + \lambda|P|\le 2\lambda(k-1) + O(1)\le 3\lambda k.
	\end{equation}
\end{proof}
Now, let us prove the lower bound in the case $c=a+b$.
\begin{proof}[Proof of Proposition \ref{lower3}]
Take $L<\exp(\gamma p^{-1})$, where $\gamma>0$ is some small constant. Let us show that
$\p_p(I^{\bullet}([L]^3))$ goes to $0$, as $p\to 0$. Fix $\varepsilon>0$.

If $[L]^3$ is internally filled, by Lemma \ref{ALbeams} there exists a covered beam $S=H\times[w]\subset [L]^3$
satisfying $w,|H|\le\varepsilon/p$, and moreover, either $w\ge\varepsilon/3\lambda p$ or $|H|\ge \varepsilon/3\lambda p$, hence, by union bound, $\p_p[I^{\bullet}([L]^3)]$ is at most
\begin{align*}
 \sum_{S\in\mathcal B_{\frac{\varepsilon}{p},\frac{\varepsilon}{p}}}\big(\p_p[I^{\text{\ding{86}}}(S)\cap
\{w\ge\varepsilon/3\lambda p\}]+\p_p[I^{\text{\ding{86}}}(S)\cap \{|H|\ge \varepsilon/3\lambda p\}]\big).
\end{align*}

To bound the first term, we use the fact that $H\times[w]$ is covered; this implies that there is no gap
of size $r$ along the $e_3$-direction.
Therefore, by considering the $w/r$ disjoint slabs,
if $\varepsilon$ is small, then there exists some $c_1=c_1(\varepsilon,r)>0$ such that
\begin{align*}
\p_p[I^{\text{\ding{86}}}(H\times[w])\cap
\{w\ge\varepsilon/3\lambda p\}] & \le
\Big(1-e^{-\Omega(p|H|)}\Big)^{w/r} 
 =\Big(1-e^{-\Omega(\varepsilon)}\Big)^{\varepsilon/2r\lambda p}
 \le e^{-c_1/p}.
\end{align*}

To bound the second term, for each $S\in\mathcal B_{\frac{\varepsilon}{p},\frac{\varepsilon}{p}}$ consider the set
\[A':=\left\{x\in[L]^2: (\{x\}\times[w])\cap\langle A\cap S\rangle \ne\emptyset
\right\}.\]
In other words, $x\in A'$ if and only if there exists $y_1\in \{x\}\times [w]$ such that 
either $y_1\in A$, or $y_1\in S$ got infected by using at least $m$
infected neighbours in $y+N_{a,b}$, where
$N_{a,b}$ is given by (\ref{Nab0}).
Now, by applying Markov's inequality,
\[\p_p(A\cap(\{x\}\times [w])\ne \emptyset) = 
O(wp)\le \varepsilon.\]
Therefore, by monotonicity we can couple the process on $[L]^2\times[w]$ having initial infected set
$A$, with $\mathcal N_{m}^{a,b}$-bootstrap
percolation on $[L]^2\times\{1\}\subset\Zdd$ where the initial infected set is chosen to be $\varepsilon$-random.

In particular, under $\mathcal N_{m}^{a,b}$-bootstrap percolation there should exist a connected component of size at least
$|H|\ge \varepsilon/3\lambda p$ inside $[L]^2$. 
On the other hand, there are at most
$L^2$ possible ways to place the origin in $H$, so if $\mathcal K$ denotes the cluster of 0, Theorem \ref{expdecay} implies

\begin{align*}
\p_p[I^{\text{\ding{86}}}(S)\cap \{|H|\ge \varepsilon/3\lambda p\}]
& \le \sum_{x\in [L]^2}\p_\varepsilon(\{|\mathcal K|>\varepsilon/3\lambda p\}\cap \{x=0\}) 
 \le L^{2}\p_\varepsilon(|\mathcal K|\ge \varepsilon/3\lambda p) \\
& \le  e^{2\gamma/p}e^{-C\varepsilon/3\lambda p} 
 = e^{-(C\varepsilon/3\lambda -2\gamma)/p},  
\end{align*}
where $C=-\frac{1}{60\beta^2}\log\varepsilon$ and we choose $\varepsilon>0$ such that $C\varepsilon>0$ and $\gamma<C\varepsilon/6\lambda$ at first.
By Lemma \ref{nofbeams} we conclude that
\begin{align*}
\p_p[I^{\bullet}([L]^3)] & \le \sum_{S\in\mathcal B_{\frac{\varepsilon}{p},\frac{\varepsilon}{p}}}\big( e^{-c_1/p}+e^{-(C\varepsilon/3\lambda -2\gamma)/p}  \big) 
 \le \frac{\varepsilon}{p}L^3(3e)^{\varepsilon/p}e^{-c_2/p}\\ 
& \le e^{4\gamma/p}e^{\varepsilon\log(3e)/p}e^{-c_2/p}\to 0,
\end{align*}
for $c_2,\gamma>0$ small enough.
\end{proof}

\subsection{The coarse beams process}
  In this section we study the last case $c\ge a+b+1$. The lower bound will be proved by using a coupling with subcritical two-dimensional bootstrap percolation again, as we did in the previous section, however, this time we infect squares instead of single vertices. 
The trick now is to consider the following coarser process.
\begin{defi}[Coarse bootstrap percolation]\label{coarse}
 Assume that $b+1$ divides $L$ and we partition $[L]^2$ as $L^2/(b+1)^2$ copies of
 $\boxplus:=[b+1]^2$ in the obvious way, and think of $\boxplus$ as a single vertex in the new scaled grid $[L/(b+1)]^2$. 
 Given a two-dimensional family $\UU$, suppose we have some fully infected copies of $\boxplus\in [L/(b+1)]^2$ and denote this initially infected set by $A$, then we define
 {\it coarse} $\UU$-bootstrap percolation to be the result of applying $\UU$-bootstrap percolation
 to the new rescaled vertices. We denote the closure of this process by $\genA_{b}$.
\end{defi}
To avoid trivialities, we assume that $b+1$ divides $L$. Set
\[m:=a+b+1< c+1 = r.\]
\begin{defi}\label{coarbeam}
A {\it coarse beam} is a finite set 
of the form $H\times[w]$,
 where $H\subset\Zdd$ is connected and $\langle H\rangle_{b}=H$
 under coarse $\mathcal N_{m}^{a,b}$-bootstrap percolation.
\end{defi}
\begin{nota}\label{constru2}
Given finite connected sets $S_1,S_2\subset [L]^2\times[L]$, we partition $[L]^2$ as in Definition \ref{coarse} and denote by $B_b(S_1\cup S_2)$ the coarse beam generated by $(S_1,S_2)$ which is constructed in the (coarse) analogous way, as we did in Definition \ref{genbeam}, using coarse paths when needed. Note that every coarse beam is a beam in the sense of the previous section.
\end{nota}
The following algorithm is a refinement of that one given in Definition \ref{beampro}.
\begin{defi}[The coarse beams process]
Let $A=\{x_1,\dots,x_{|A|}\}\subset [L]^3$ and fix $r\ge c+1$.
 Set $\mathcal B:=\{S_1,\dots,S_{|A|}\}$, where $S_i=\{x_i\}$ for each $i=1,\dots,|A|$, and
repeat until STOP:
 \begin{enumerate}
  \item If there exist distinct beams $S_{1},S_{2}\in\mathcal B$ such that
  \[S_{1}\cup S_{2}\]
  is strongly connected, and $\langle S_{1}\cup S_{2} \rangle\ne 
  S_{1}\cup  S_{2}$,
  then choose a minimal such family, remove it from $\mathcal B$,
  and replace by a coarse beam $B_b(S_{1}\cup S_{2})$. 
  \item If there do not exist such a family of sets in $\mathcal B$, then STOP.
 \end{enumerate}
We call any beam $S=B_b(S_{1}\cup  S_{2})$ added to the collection $\mathcal B$ a {\it covered beam}, and denote the event that $S$ is covered by
$I_b^{\text{\ding{86}}}(S).$
 \end{defi}

 The two highlighted usual properties are preserved for this algorithm too:
\begin{itemize}
 \item  Any covered beam $S$ satisfies $\langle A\cap S\rangle\subset \langle S\rangle=S$.
 \item There is a final collection $\mathcal B'$ and we can set
 $V(\mathcal B'):=\bigcup\limits_{S\in\mathcal B'}S$. Then, we also have $\genA\subset V(\mathcal B')$.
\end{itemize}
\subsection{Case $c>a+b$}
  In this section we prove the lower bound corresponding to the last case. 
\begin{prop}\label{lower4}
	Under $\mathcal N_{c+1}^{a,b,c}$-bootstrap percolation with $c>a+b$, there exists a constant $\gamma=\gamma(c)>0$
	such that, if 
	\[L<\exp(\gamma p^{-1}(\log p)^2),\]
then 
$\p_p[I^{\bullet}([L]^3)]\to 0,\ as\ p\to 0.$
\end{prop}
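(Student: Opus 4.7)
The plan is to adapt the proof of Proposition~\ref{lower3}, replacing the beams process by the coarse beams process. The extra factor $(\log 1/p)^2$ in $\log L$ is meant to come from coupling with the subcritical coarse $\mathcal{N}_m^{a,b}$-bootstrap at a density $\varepsilon = p^\delta$ that tends to zero with $p$ (instead of the constant $\varepsilon$ used in Proposition~\ref{lower3}), which turns the exponential-decay constant $C(\varepsilon) = -\log(\varepsilon)/(60\beta^2)$ from Theorem~\ref{expdecay} into $\Theta(\log(1/p))$.

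First, I would prove the coarse analogue of the Aizenman--Lebowitz lemma (the direct counterpart of Lemma~\ref{ALbeams}): if $[L]^3$ is internally filled, then for every $k$ in an appropriate range there exists a covered coarse beam $H\times[w]$ with $w \le rk$ and $|H|/(b+1)^2 \le 3\lambda k$, such that either $w\ge k$ or $|H|/(b+1)^2 \ge k$. Here $\lambda = \lambda(\mathcal{N}_m^{a,b})$ is the constant from Lemma~\ref{stuck} applied to the subcritical coarse two-dimensional family $\mathcal{N}_m^{a,b}$, which is subcritical because $m = a+b+1$ forces $\Ss(\mathcal{N}_m^{a,b}) = S^1$. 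The argument follows the one for Lemma~\ref{ALbeams} verbatim, using that in the coarse beams process each merge produces a beam whose $w$ and $|H|/(b+1)^2$ are bounded by a constant multiple of the maximum of the corresponding quantities for the two sub-beams.

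Next, I would fix a small $\delta = \delta(c) > 0$, set $\varepsilon := p^\delta$, and apply the coarse AL lemma at a scale $k := \lfloor c_0\,p^{-1}\log(1/p)\rfloor$ chosen so that $C(\varepsilon)\,k \gg 2\log L$. When AL returns a beam with $|H|/(b+1)^2 \ge k$, the coarse beams process couples the projection $H$ (as a set of coarse $\boxplus$-vertices in $[L/(b+1)]^2$) with the closure of an $\varepsilon$-random set under coarse $\mathcal{N}_m^{a,b}$-bootstrap, and Theorem~\ref{expdecay} bounds the relevant probability by $L^2 e^{-C(\varepsilon)\,k} \le e^{-\Omega(p^{-1}(\log 1/p)^2)}$. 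When instead $w \ge k$, I would apply the $e_3$-direction no-gap argument from the proof of Proposition~\ref{lower3} (every disjoint slab $H\times[r]$ must contain an $A$-vertex because $r = c+1$ and the beam is covered). Summing over coarse beams via Lemma~\ref{nofbeams}, whose count contributes at most an $L^3(3e)^{O(k)}$ factor, and choosing $\gamma > 0$ sufficiently small will yield $\p_p[I^{\bullet}([L]^3)]\to 0$ as $p\to 0$.

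The main obstacle will be selecting $(\delta,k)$ so that \emph{both} cases of the AL lemma give bounds beating $\log L = \gamma p^{-1}(\log 1/p)^2$. The regime $w \ge k$ is the delicate one: at the scale $k \sim p^{-1}\log(1/p)$ the quantity $p\cdot|H|$ is not small, so the naive no-gap computation $(1-e^{-\Omega(p|H|)})^{w/r}$ does not decay fast enough. To handle this I expect one must either apply the coarse AL lemma at two different scales (extracting separate covered coarse beams in the $|H|$-large and $w$-large regimes), or combine the no-gap argument with the $2$D coupling by chopping the $e_3$-direction into sub-slabs of thickness $O(\varepsilon/((b+1)^2 p)) = O(p^{\delta-1}/(b+1)^2)$, on each of which the induced coarse $\mathcal{N}_m^{a,b}$-bootstrap process has density $O(p^\delta)$ and so remains well inside the subcritical regime of Theorem~\ref{expdecay}.
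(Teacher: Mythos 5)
Your proposal follows the same overall strategy as the paper, and, importantly, you correctly diagnose the key obstacle: with a single scale $k\sim p^{-1}\log\tfrac1p$ the quantity $p|H|$ is of order $\log\tfrac1p$, so the naive no-gap bound $(1-e^{-\Omega(p|H|)})^{w/r}$ is not strong enough to beat $L^3(3e)^{O(k)}$. Your first suggested remedy is exactly what the paper does. Lemma~\ref{ALlema2} is a genuinely two-scale Aizenman--Lebowitz lemma (two free parameters $h$ and $k$: the output beam satisfies $w\le rk$, $|H|\le 2(b+1)^2\lambda h$, and either $w\ge k$ or $|H|\ge h$), and the proof of Proposition~\ref{lower4} applies it with the decoupled choices $h=\delta p^{-1}\log\tfrac1p$ and $k=p^{-3/2}$. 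With $k$ this much larger than $h$, the $e_3$ no-gap bound becomes $(1-p^{\Theta(\delta)})^{\Omega(p^{-3/2})}=e^{-p^{-3/2+O(\delta)}}$, which is far smaller than $e^{-\Theta(p^{-1}(\log\tfrac1p)^2)}$, so no delicate balancing is needed in the $w$-large regime. In the $|H|$-large regime the paper takes the coarse coupling density to be $\varepsilon=p^{1/3}$ rather than $p^{\delta}$: this is consistent because the column height is $w\le rk=O(p^{-3/2})$, so a column $\boxplus\times[w]$ contains two initially infected sites within constant distance with probability $O(kp^2)=O(p^{1/2})\le p^{1/3}$, and then Theorem~\ref{expdecay} gives $C(\varepsilon)=\Theta(\log\tfrac1p)$ and $L^2e^{-C(\varepsilon)h/2\lambda}=e^{-\Omega(p^{-1}(\log\tfrac1p)^2)}$ for small $\gamma$. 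Your alternative remedy of chopping the $e_3$-axis into sub-slabs is not used in the paper, though it is in the same spirit; the two-scale lemma is the cleaner route and is what is actually written.
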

We state an analogue of Lemma \ref{ALbeams} for the coarse beams setting without proof
because the arguments are exactly the same. However, we obtain slightly different constants since the number of vertices of the form $\boxplus$ in a coarse beam $H$ equals
$|H|/(b+1)^2$.

Consider $\mathcal N_{r}^{a,b,c}$-bootstrap percolation with $r\ge c+1$, and let $\kappa_0$ be a large positive integer depending on $a,b,c$ and $r$.
\begin{lemma}\label{ALlema2}
 If $[L]^3$ is internally filled then for every $h,k=\kappa_0,\dots,L$, there exists
 a covered (coarse) beam
 $H\times[w]\subset [L]^3$ satisfying $w\le rk$, $|H|\le 2(b+1)^2\lambda h$, and
 either $w\ge k$ or $|H|\ge h$.
\end{lemma}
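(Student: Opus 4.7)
The plan is to transcribe the proof of Lemma \ref{ALbeams} almost verbatim, tracking two separate thresholds $k$ (for the $e_3$-direction) and $h$ (for the base) and paying attention to the factor $(b+1)^2$ introduced by passing to the coarse lattice. Throughout, I treat every set $H_t$ and every path produced during the algorithm as a union of complete $\boxplus$-squares, so that Lemma \ref{stuck} can be applied directly to coarse $\mathcal N_{m}^{a,b}$-bootstrap percolation on the rescaled grid.

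First, I would let $S=H\times[w]$ be the first beam to appear in the coarse beams process for which either $w\ge k$ or $|H|\ge h$; such a beam exists because $V(\mathcal B')\supset \genA=[L]^3$. By construction $S=B_b(S_1\cup S_2)$ for two beams $S_t=H_t\times[w_t]$ present in $\mathcal B$ immediately before $S$ was inserted. By the choice of $S$ as the \emph{first} such beam, neither $S_1$ nor $S_2$ crosses either threshold, so $w_t<k$ and $|H_t|<h$ for $t=1,2$ (the initial singletons trivially satisfy this, once $\kappa_0\ge 1$).

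Next, I would bound $w$ exactly as in Lemma \ref{ALbeams}: since strong connectedness of $S_1\cup S_2$ means the $e_3$-coordinates of some $u\in S_1$ and $v\in S_2$ differ by at most $c<r$, the glued $e_3$-extent is at most $2\max\{w_1,w_2\}+r\le 2(k-1)+r$, which is at most $rk$ provided $\kappa_0\ge r$. For the $|H|$ bound I would use that $H=\langle H_1\cup H_2\cup P\rangle_b$ is connected under coarse $\mathcal N_m^{a,b}$-bootstrap percolation, where $P$ is the (possibly empty) minimal coarse path joining $H_1$ to $H_2$. Strong connectedness of $S_1\cup S_2$ in $\Zdt$ forces the projections of some $u\in S_1$, $v\in S_2$ to lie within constant distance in $\Zdd$, so $P$ contains only $O(1)$ many $\boxplus$-squares; hence $|P|=O((b+1)^2)$. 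Applying Lemma \ref{stuck} on the rescaled grid, where the cluster $H$ (measured in $\boxplus$'s) is at most $\lambda$ times the seed $H_1\cup H_2\cup P$ (also in $\boxplus$'s), gives
\[
\frac{|H|}{(b+1)^2}\le \lambda\left(\frac{|H_1|}{(b+1)^2}+\frac{|H_2|}{(b+1)^2}+\frac{|P|}{(b+1)^2}\right)
\le \lambda\cdot\frac{2(h-1)}{(b+1)^2}+O(\lambda).
\]
Multiplying through by $(b+1)^2$ produces $|H|\le 2\lambda h+O((b+1)^2\lambda)$. Taking $\kappa_0=\kappa_0(a,b,c,r)$ large enough to absorb the additive constant into the prefactor yields $|H|\le 2(b+1)^2\lambda h$, as required.

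The only place where care is really needed is the accounting in the coarse regime: one has to ensure that $H_1$, $H_2$, and $P$ are genuine unions of $\boxplus$-squares before Lemma \ref{stuck} is invoked, and that $|P|$ is genuinely of constant order (which is where the strong-connectedness convention $\bar N_{a,b,c}$ and the fixed size $b+1$ of the blocks are essential). Everything else, including the stopping rule and the estimate on $w$, is formally identical to the proof of Lemma \ref{ALbeams}, which is why the authors chose to omit it.
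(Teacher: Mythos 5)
Your proof is correct and follows exactly the approach the paper intends: transcribe the proof of Lemma~\ref{ALbeams}, tracking the two thresholds $h$ and $k$ separately and noting that Lemma~\ref{stuck} is applied to the rescaled $\boxplus$-grid, which introduces the $(b+1)^2$ factor once cardinalities are converted back to the fine lattice; the additive $O((b+1)^2\lambda)$ term from the coarse connecting path is then absorbed by the $\kappa_0$ requirement. (The clause ``provided $\kappa_0\ge r$'' in the $w$-bound is unnecessary since $2(k-1)+r\le rk$ already holds for all $k\ge 1$ when $r\ge 2$, but this does not affect correctness.)
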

%

Finally, we prove the lower bound for the remaining case.
\begin{proof}[Proof of Propositon \ref{lower4}]
Take $L<\exp(\gamma p^{-1}(\log p)^2)$, where $\gamma>0$ is some small constant. Let us show that
$\p_p(I^{\bullet}([L]^3))$ goes to $0$, as $p\to 0$. Fix $\delta>0$ and set
\[h=\delta p^{-1}\log \tfrac 1p ,\ \ \ k=p^{-\frac 32}\]
If $[L]^3$ is internally filled, by Lemma \ref{ALlema2} there exists a covered beam
$S=H\times[w]\subset [L]^3$ satisfying $w\le k$, $|H|\le (b+1)^2h$, and
either $w\ge k/2\lambda$ or $|H|\ge h/2\lambda$ (as we said, the cardinality of
$H$ viewing $S$ as a beam equal $(b+1)^2|H|$ viewing $S$ as a coarse beam), hence $\p_p[I^{\bullet}([L]^3)]$ is at most
\begin{align*}
 \sum_{S\in\mathcal B_{k,(b+1)^2h}}\Big(\p_p[I_b^{\text{\ding{86}}}(S)\cap
\{w\ge k/2\lambda\}]+\p_p[I_b^{\text{\ding{86}}}(S)\cap \{|H|\ge h/2\lambda\}]\Big).
\end{align*}
To bound the first term, we use the fact that 
$A\cap (H\times\{rk+1,\cdots,rk+r\})\ne \emptyset$ for all $k=0,\dots,w/r-1$, 
since $H\times[w]$ is covered. Therefore, for some $c_1>0$,
\begin{align*}
\p_p[I_b^{\text{\ding{86}}}(H\times[w])\cap  \{w\ge k/2\lambda\}] 
&\le \big(1-(1-p)^{rh}\big)^{w/r}
 \le \big(1-e^{-2r\varepsilon\log\frac 1p}\big)^{k/2r\lambda} 
 = \big(1-p^{2r\varepsilon}\big)^{k/2r\lambda}\\
& \le e^{-p^{2r\varepsilon-\frac 32}/2r\lambda}
=e^{-c_1 p^{-1}(\log\frac 1p)^2}.  
\end{align*}
To bound the second term we use the fact that $r=c+1\ge a+b+2$. More precisely, if $[L]^3$ is internally filled, then every copy of
$[b+1]^2\times [L]$
should contain at least 2 vertices of $A$ within some constant distance,
otherwise, there is no way to infect such a copy.

Then, given $S=H\times[w]\in\mathcal B_{k,(b+1)^2h}$ consider the set
$A'$ consisting of all copies of $\boxplus\subset [L]^2$ (as in Definition \ref{coarse})
such that the rectangle $\boxplus\times [w]\subset S$ contains at least 2 vertices of $A$ within
distance $r$. 
By union bound, the probability of finding such vertices is at most
\[\sum_{x\in\boxplus\times [w]}\sum_{0<\|y-x\|\le r}\p_p(x,y\in A)\le
\tilde{C}wp^2\le p^{\frac 13}.\]
Therefore, by monotonicity we can couple the process in $[L]^2\times[w]$ having initial infected set
$A$, with coarse $\mathcal N_{m}^{a,b}$-bootstrap
percolation on $[L/(b+1)]^2$ and initial infected set $\varepsilon$-random
with $\varepsilon=\varepsilon(p):=p^{1/3}$. 

In particular, under $\mathcal N_{m}^{a,b}$ (coarse) there should exist a connected component of size at least
$|H|\ge h/2\lambda$ inside $[L]^2$. Once more, there are at most
$L^2$ possible ways to place the origin in $H$, so if $\mathcal K$ denotes the (coarse) cluster of 0, Theorem \ref{expdecay} implies

\begin{align*}
\p_p[I_b^{\text{\ding{86}}}(S)\cap \{|H|\ge h/2\lambda\}]
& \le \sum_{\boxplus\subset [L]^2}\p_\varepsilon(\{|\mathcal K|>h/2\lambda\}\cap \{\boxplus=0\}) 
 \le L^{2}\p_\varepsilon(|\mathcal K|\ge h/2\lambda) \\
& \le  e^{2\gamma p^{-1}(\log\frac 1p)^2}e^{-Ch/2\lambda}
 = e^{-(c' -2\gamma) p^{-1}(\log\frac 1p)^2},  
\end{align*}
for some constant $c'=c'(\beta,\lambda)>0$ (recall that $C\sim-\log p$ asymptotically, by 
Theorem \ref{expdecay}). Take $\gamma<c'/2$ at first;
by Lemma \ref{nofbeams} we conclude that
\begin{align*}
\p_p[I^{\bullet}([L]^3)] & \le \sum_{S\in\mathcal 
	B_{k,(b+1)^2h}}\Big( e^{-c_1 p^{-1}(\log\frac 1p)^2}
+e^{-(c' -2\gamma)p^{-1}(\log\frac 1p)^2}  \Big) \\
&  \le kL^3(3e)^{(b+1)^2h}e^{-c_3p^{-1}(\log\frac 1p)^2}
 \le e^{4\gamma p^{-1}(\log\frac 1p)^2}e^{-c_3p^{-1}(\log\frac 1p)^2}\to 0,
\end{align*}
as $p\to 0$, for $c_3,\gamma>0$ small enough, and we are finished.
\end{proof}

\section{Future work}
All proofs in this paper extend to the case $r=c+2$, and can be used to determine $\log L_c\left(\mathcal N_{c+2}^{a,b,c},p\right)$ up to a constant factor for all triples $(a,b,c)$, except for $c=a+b-1$ which is a new interesting case to be studied separately.

In general, a problem which remains open is the determination of the threshold for other values of $r$.
We believe that the techniques used to prove Theorem~\ref{target0} can be adapted to cover all $c+1<r\le b+c$ 
(though significant technical obstacles remain); in this case, by Proposition \ref{genuppbound} below, the critical length is singly exponential.
However, to deal with the cases $b+c<r<a+b+c$,
the techniques required are likely to be more similar to those of~\cite{CC99} and~\cite{AA12}, and the critical length should be doubly exponential.

Finally, Theorem \ref{expdecay} can be generalized to all dimensions $d\ge 3$ and all families $\UU$ such that $\Ss(\UU)= S^{d-1}$.
However, we do not know if this property holds for subcritical families $\UU$ satisfying $\Ss(\UU)\ne S^{d-1}$.
In order to determine the critical lengths for general critical models, it could be useful to extend this result to a wider class of subcritical families.
\begin{problem}\label{probExDecay}
Characterize the subcritical $d$-dimensional update families $\UU$
such that $\mathcal K$ has the exponential decay property.
\end{problem}

\appendix
\section{A general upper bound for $r\le b+c$}
In this appendix we assume that $r\le b+c$ and show that the critical length is at most singly exponential in this case, as we claimed above.
Consider $\mathcal N_{r}^{a,b,c}$-bootstrap percolation.
\begin{prop}\label{genuppbound}
Given $r\in\{c+1,\dots,c+b\}$,
there exists $\Gamma=\Gamma(c)>0$ such that, if $L> L_c(\mathcal N_{r}^{b,c},p)^\Gamma$,
then $\pp_p\left(\langle A\rangle_{\mathcal N_{r}^{a,b,c}} = [L]^3\right)\to 1$, as $p\to 0$.
Thus, 
\[\log L_c\left(\mathcal N_{r}^{a,b,c},p\right)=O\left(\log L_c(\mathcal N_{r}^{b,c},p)\right)
=O\left(p^{-(r-c)}(\log p)^2\right).\]
\end{prop}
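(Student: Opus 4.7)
The plan is a seed-and-grow argument reducing the 3D problem to the 2D critical length $L_0 := L_c(\mathcal N_r^{b,c}, p)$, by exploiting the smallest-range $e_1$-direction as a cascading growth direction.

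First, I would construct a 3D seed. Partition $[L]^3$ into $\Theta(L^3/(a L_0^2))$ disjoint blocks of dimensions $a \times L_0 \times L_0$: each such block consists of $a$ consecutive slices of cross-section $L_0 \times L_0$. A block is internally filled whenever all $a$ of its slices are internally filled under the 2D $\mathcal N_r^{b,c}$-process, and by the definition of $L_0$ and the independence of slices this occurs with probability at least $2^{-a}$, a positive constant depending only on $a$. Since the blocks are disjoint (hence independent) and $L \gg L_0$, a standard union bound shows that with probability tending to 1 at least one such block is internally filled, yielding a seed $R_0$.

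Second, I would cascade $R_0$ along the $e_1$-direction to fill the entire column $C := [L] \times Q$, where $Q$ is the cross-section of $R_0$. The key observation is that once the previous $a$ slices of $C$ are filled, every vertex of the next slice has $a$ guaranteed infected $e_1$-neighbors; hence this new slice fills under 3D dynamics whenever the easier 2D process $\mathcal N_{r-a}^{b,c}$ percolates on $Q$. Since $r-a<r$ we have $L_c(\mathcal N_{r-a}^{b,c},p) \le L_0$, so each individual step succeeds with probability at least $1/2$, independently across steps. A naïve cascade of $L$ steps succeeds with probability only $2^{-L}$, so I would boost the per-slice success probability by widening the cross-section to $\bar L_0 \times \bar L_0$ with $\bar L_0 = L_0 \cdot \mathrm{polylog}(L)$. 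Within $[\bar L_0]^2$, many disjoint $L_0 \times L_0$ sub-rectangles exist, so at least one fills with probability $\ge 1 - L^{-3}$, and then outward-growth lemmas in the spirit of Section 2 extend the filled sub-rectangle to all of $[\bar L_0]^2$; a union bound over the $L$ slices of $C$ fills the whole column.

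Finally, once a column of dimensions $L \times \bar L_0 \times \bar L_0$ is fully infected, I would extend it to the full cube $[L]^3$ by growing in the $e_2$- and $e_3$-directions, exploiting the fact that every vertex adjacent to the column sees a full line of $L$ infected neighbors in the $e_1$-direction, reducing its effective threshold enough that 2D growth proceeds with high probability via the techniques of Section 2. Combining the three stages and choosing $\Gamma$ large enough that all polylogarithmic factors are absorbed gives $L > L_0^\Gamma$ as required.

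The main obstacle is the probability-amplification step for the column cascade: boosting the per-slice filling probability from $\ge 1/2$ at the critical scale $L_0$ to $\ge 1 - L^{-2}$ at the slightly larger scale $\bar L_0$. This is not automatic for a critical 2D process like $\mathcal N_{r-a}^{b,c}$; it requires showing outward growth of any internally filled $L_0 \times L_0$ droplet to the full cross-section $[\bar L_0]^2$, which is established by observing that the faces of such a droplet are governed by lower-threshold and more favorable 2D subprocesses, analogous to the critical-face and supercritical-face analyses in Lemmas 2.2-2.5.
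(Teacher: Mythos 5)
Your seed--cascade--grow plan is a genuinely different route from the paper's, which is considerably simpler: the paper slices $[L]^3$ into the $L$ disjoint copies $R_i = \{i\}\times[L]^2$ orthogonal to $e_1$, observes that the restriction of the 3D process to each $R_i$ dominates the 2D $\mathcal N_r^{b,c}$-process on $[L]^2$, and then applies a single renormalization lemma (quoted from Schonmann) which boosts the probability that $\mathcal N_r^{b,c}$ fills $[L]^2$ to $\ge 1-\exp(-L^{1/2})$ once $L>L_c(\mathcal N_r^{b,c},p)^{\Gamma'}$. Independence of the slices then gives $(1-e^{-L^{1/2}})^L\to 1$ directly, with no need for a 3D seed, a cascade along $e_1$, or a final growth stage. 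Your stage 2 is essentially re-deriving that renormalization boost (for $\mathcal N_{r-a}^{b,c}$, to error $1-L^{-3}$), so you end up doing the hard amplification work anyway, plus the additional machinery.

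There is also a concrete error in your stage 3. You write that every vertex adjacent to the column $[L]\times[\bar L_0]^2$ ``sees a full line of $L$ infected neighbours in the $e_1$-direction,'' but this is false: a vertex $(i,\bar L_0+1,k)$ has $e_1$-neighbours $(i\pm j,\bar L_0+1,k)$, $j\le a$, which share its $(e_2,e_3)$-coordinates and therefore lie \emph{outside} the column, not inside it. What such a vertex actually gains for free are the $b$ infected $e_2$-neighbours $(i,\bar L_0+1-j,k)$, $1\le j\le b$ (and analogously $c$ infected $e_3$-neighbours when stepping in $e_3$). The correct observation is thus that the induced process on the adjacent slab is $\mathcal N_{r-b}^{a,c}$ (resp.\ $\mathcal N_{r-c}^{a,b}$), and since $r\le b+c$ gives $r-b\le c$ and $r-c\le b$, both of these two-dimensional families are supercritical, so the column can indeed be propagated; but your stated mechanism is not the one that works, and the argument as written would need to be repaired along these lines. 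Finally, note that the monotonicity you implicitly invoke in stage 2 (``$L_c(\mathcal N_{r-a}^{b,c},p)\le L_0$, so each step succeeds with probability $\ge 1/2$ at scale $L_0$'') is not automatic from the definition of $L_c$ alone; it also requires a growth/amplification argument of the same flavour you are about to deploy.
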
 
\begin{rema}\label{coveredcases}
This proposition, in particular, already gives us the upper bound in the case $c>a+b$ of our main Theorem \ref{target0}.
It also shows that $\mathcal N_{r}^{a,b,c}$ is 2-critical for all $r\in\{c+1,\dots,c+b\}$;
in fact, since $L_c(\mathcal N_{r}^{a,b,c},p)$ is increasing in $r$, 
by Proposition \ref{genuppbound},
\[\log L_c\left(\mathcal N_{r}^{a,b,c},p\right)\le \log L_c\left(\mathcal N_{c+b}^{a,b,c},p\right)\le O\left(p^{-b}(\log \tfrac 1p)^2\right).\]
Moreover, by Theorem \ref{target0} we also have
\[\log L_c\left(\mathcal N_{r}^{a,b,c},p\right)\ge \log L_c\left(\mathcal N_{c+1}^{a,b,c},p\right)\ge \Omega\left(p^{-1/2}\right).\]
\end{rema}
To prove this proposition, we will use dimensional reduction by means of a renormalization argument, and show that filling the whole of $[L]^3$ is at most as hard as filling $L$ disjoint copies of $[L]^2$ which are orthogonal to the $e_1$-direction.

To do so in this regime, we will compare the family $\mathcal N_{r}^{a,b,c}$ with the two-dimensional
family $\mathcal N_{r}^{b,c}$ consisting of all subsets of size $r$ of the set
$N_{b,c}$ given by (\ref{Nab0}).
It turns out that $\mathcal N_{r}^{b,c}$ is critical if and only if $r$ belongs to this regime,
and in this case $\Ss(\mathcal N_{r}^{b,c})=\{\pm e_1,\pm e_2\}$.
The key step is to refine the upper bounds in (\ref{paso1}), by using standard renormalization techniques.
\begin{lemma}[Renormalization]\label{elactual}
Under $\mathcal N_{r}^{b,c}$-bootstrap percolation with $r\in\{c+1,\dots,c+b\}$,
there exists a constant $\Gamma'>0$ depending on $c$ such that,
	\begin{equation}\label{1menosL2}
	\pp_p\left(\langle A\rangle_{\mathcal N_{r}^{b,c}} = [L]^2\right)\ge
	1-\exp\left(-L^{1/2}\right),
	\end{equation}
	for all $p$ small enough and 
	$L> L_c(\mathcal N_{r}^{b,c},p)^{\Gamma'}$.
\end{lemma}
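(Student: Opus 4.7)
The plan is a standard two-scale renormalization: first use disjoint-block independence to find a fully-infected seed of size $L_0$ inside $[L]^2$, then grow this seed column-by-column to fill all of $[L]^2$ using the column-growth estimates that underlie the upper-bound proofs of Section~\ref{SectionUpper12}. Write $L_0:=L_c(\mathcal N_r^{b,c},p)$, so that $\pp_p(I^\bullet([L_0]^2))\ge 1/2$ by the definition of the critical length.

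\textbf{Seeding.} I tile $[L]^2$ with $N^2$ pairwise disjoint copies of $[L_0]^2$, where $N=\lfloor L/L_0\rfloor$. Since disjoint copies depend on disjoint parts of the initial configuration $A$, the events that they are internally filled form an independent family of Bernoulli$(\ge 1/2)$ events, so the probability that none of them is internally filled is at most $2^{-N^{2}}$. For $L\ge L_0^{\Gamma'}$ with $\Gamma'\ge 3/2$, this is at most $e^{-L_0\log 2}\le \tfrac12 e^{-L^{1/2}}$, since $N^2\ge L_0^{2(\Gamma'-1)}\ge L_0$ and $L_0\ge L^{1/2}=L_0^{\Gamma'/2}$.

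\textbf{Growth.} Conditioning on the existence of an internally-filled seed $R^\star\subset[L]^2$ of side $L_0$, I grow $R^\star$ column-by-column in each of the four coordinate directions until it covers $[L]^2$. By the 2-dimensional analogues of the estimates used in Lemmas~\ref{lemaesencial} and~\ref{regimer<a+b}, the probability of failing to extend the current rectangle by one column of length $\ell\le L$ is at most $\exp(-cp^{r-c}\ell)$ in the easy $\pm e_2$-direction and at most $\exp(-cp^{r-b}\ell)$ in the hard $\pm e_1$-direction. Union-bounding over the at most $4L$ such growth steps and using the worst-case column length $L_0$, the total growth-failure probability is at most $4L\exp(-cp^{r-b}L_0)$. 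By (\ref{paso1}), $\log L_0\ge C\,p^{-(r-c)}(\log p)^2$, so $p^{r-b}L_0=\exp(\log L_0-(r-b)\log(1/p))$ is, for any fixed $\Gamma'<2$, much larger than $L^{1/2}=L_0^{\Gamma'/2}$ once $p$ is small enough; hence this contribution is also at most $\tfrac12 e^{-L^{1/2}}$.

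\textbf{Conclusion and main obstacle.} Combining the two bounds and setting $\Gamma':=3/2$ (or any $\Gamma'\in[\tfrac32,2)$) yields $\pp_p(I^\bullet([L]^2))\ge 1-e^{-L^{1/2}}$, from which the lemma follows by monotonicity. The only genuine technical work is the column-growth estimate: while it is the straightforward 2D analogue of the estimates in Section~\ref{SectionUpper12}, one must carefully keep track of the two direction-dependent exponents $r-b$ and $r-c$ imposed by the anisotropy of $\mathcal N_r^{b,c}$ and verify that both grow-step bounds kick in uniformly over $\ell\in[L_0,L]$. The seeding step, by contrast, is an immediate consequence of disjoint-block independence combined with the definition of $L_c$.
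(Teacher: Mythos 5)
The paper does not prove this lemma at all — it simply cites Schonmann's 1992 paper \cite{Sch92}, which establishes this kind of renormalization estimate in generality for critical bootstrap models. Your self-contained two-scale argument (seeding with $N^2$ disjoint $[L_0]^2$ blocks, then growing a filled seed row/column by row/column) is the standard way to prove it and is correct in substance, including the direction-dependent per-step estimates: growing a new column $\{h+1\}\times[w]$ needs a run of $r-b$ consecutive infected sites in the column, giving failure $\le\exp(-\Omega(p^{r-b}w))$, while growing a new row needs a run of $r-c$ giving $\le\exp(-\Omega(p^{r-c}h))$; both are overwhelmingly likely once the row/column length is $\ge L_0\gg p^{-(r-b)}$.

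Two small points deserve tightening. First, the displayed chain $2^{-N^2}\le e^{-L_0\log 2}\le\tfrac12 e^{-L^{1/2}}$ is not justified by $L_0\ge L^{1/2}$ alone, since $\log 2<1$; what actually saves you is that for $\Gamma'=3/2$ one has $N^2\gtrsim L_0^{2\Gamma'-2}=L_0$ while $L^{1/2}=L_0^{\Gamma'/2}=L_0^{3/4}$, so $N^2/L^{1/2}\gtrsim L_0^{1/4}\to\infty$, which comfortably gives $2^{-N^2}\le\tfrac12 e^{-L^{1/2}}$ for $p$ small. Second, ``conditioning on the existence of an internally-filled seed $R^\star$'' is not a clean product structure, since the seed's location is random. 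The cleanest patch is a union bound: writing $E_{ij}$ for the event that block $B_{ij}$ is internally filled and $G_{ij}$ for the event that the prescribed row/column growth from $B_{ij}$ fills $[L]^2$, note that $E_{ij}$ and $G_{ij}$ depend on disjoint portions of $A$, hence are independent, and
\[
\pp_p\big(\langle A\rangle\ne[L]^2\big)\le\pp_p\Big(\bigcap_{i,j}E_{ij}^c\Big)+\sum_{i,j}\pp_p\big(E_{ij}\cap G_{ij}^c\big)\le 2^{-N^2}+N^2\cdot 4L\cdot e^{-c'p^{r-b}L_0}.
\]
(Alternatively one may apply Harris/FKG to the two increasing events ``some block is filled'' and ``every growth step from every block succeeds''.) With $\Gamma'\in[3/2,2)$ and $p$ small, both terms are $\ll e^{-L^{1/2}}$, which yields the lemma.
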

\begin{proof}
 See, e.g. \cite{Sch92}.
\end{proof}
Now, we prove the general upper bound.

\begin{proof}[Proof of Proposition \ref{genuppbound}]
Decompose $[L]^3$ as $L$ consecutive copies of $[L]^2$ all of them orthogonal to the $e_1$-direction, and
call those copies $R_i:=\{i\}\times[L]^2$.

Now, we couple the original process with the reduced two-dimensional processes;
	if for each $i\in\{1,\dots,L\}$,
	$\langle A\cap R_i\rangle_{\mathcal N_{r}^{b,c}}=R_i$ in the 
	$\mathcal N_{r}^{b,c}$-bootstrap process,
	then $[L]^3$ is internally filled. Therefore, by Lemma \ref{elactual} we have
	\begin{align*}  
	\pp_p\left(\langle A\rangle_{\mathcal N_{r}^{a,b,c}} = [L]^3\right)& \ge \pp_p\bigg(\bigcap\limits_{i=1}^L\{\langle A\cap R_i\rangle_{\mathcal N_{r}^{b,c}}=R_i\}\bigg)
	=\prod _{i=1}^L\pp_p\left(\langle A\cap R_i\rangle_{\mathcal N_{r}^{b,c}}=R_i\right)\\
	& \ge \left[1-\exp\left(-L^{1/2}\right)\right]^L\xrightarrow[p\to 0]{} 1,
	\end{align*}
	if $L> 
	\exp\left(\Gamma'\, p^{-(r-c)}(\log p)^{2\cdot\mathds 1_{\{c>b\}}}\right)$.
\end{proof} 
\section*{Acknowledgements}
The author would like to thank Rob Morris for introducing him to bootstrap percolation, and for his many invaluable suggestions. The author is very grateful to the  Instituto Nacional de Matem\'atica Pura e Aplicada (IMPA) for the time and space to create, research and write in this strong academic environment.




\bibliographystyle{plain}
\bibliography{References}
\end{document}